\numberwithin{equation}{section}
\theoremstyle{plain}
\newtheorem{exam}{Example}[section]
\newtheorem{theorem}[exam]{Theorem}
\newtheorem{lemma}[exam]{Lemma}
\newtheorem{remark}[exam]{Remark}
\newtheorem{definition}[exam]{Definition}
\newtheorem{notation}[exam]{Notation}
 \def\eps{{\varepsilon}}
 \def\ov{\overline}
 \def\S{\mathbb S}
 \def\R{\mathbb R}
 \def\N{\mathbb N}
 \def\Q{\mathbb Q}
\def\Z{\mathbb Z}
\def\A{\mathcal A}
\def\B{\mathcal B}
\title{Finite cyclicity of some center graphics through a nilpotent point inside quadratic systems}
\author{Robert Roussarie, Universit\'e de Bourgogne\\
Christiane Rousseau, Universit\'e de Montr\'eal\thanks{This research was supported by NSERC in Canada.}}
\begin{document}


\maketitle

\begin{abstract} In this paper we introduce new methods to prove the finite cyclicity of some graphics through a triple nilpotent point of saddle or elliptic type surrounding a center. After applying a blow-up of the family, yielding a singular 3-dimensional foliation, this amounts to proving the finite cyclicity of a family of limit periodic sets of the foliation. The boundary limit periodic sets of these families were the most challenging, but the new methods are quite general for treating such graphics.    We apply these techniques to prove the finite cyclicity of the graphic $(I_{14}^1)$, which is part of the program started in 1994 by Dumortier, Roussarie and Rousseau (and called DRR program) to show that there exists a uniform upper bound for the number of limit cycles of a planar quadratic vector field. We also prove the finite cyclicity of the boundary limit periodic sets in all graphics but one through a triple nilpotent point at infinity of saddle, elliptic or degenerate type (with a line of zeros) and  surrounding a center, namely the graphics $(I_{6b}^1)$, $(H_{13}^3)$,
 and $(DI_{2b})$.
\end{abstract}

\section{Introduction}
This paper is part of a long term program to prove the finiteness part of Hilbert's 16th problem for quadratic vector fields, sometimes written $H(2) <\infty$, namely the existence of a uniform bound for the number of limit cycles of quadratic vector fields. The DRR program (see paper \cite{DRR94(1)}) reduces this problem to proving that 121 graphics (limit periodic sets) have finite cyclicity inside quadratic vector fields, and the long term program is to prove the finite cyclicity of all these graphics. 

This program has been an opportunity to develop new more sophisticated methods for analyzing the finiteness of the number of limit cycles bifurcating from graphics in generic families of $C^\infty$ vector fields, in analytic families of vector fields, and in finite-parameter families of polynomial vector fields. In this paper, we focus on some graphics in the latter case: graphics through a nilpotent point and surrounding a center inside quadratic systems. The general method is to use the Bautin trick, namely transforming a proof of finite cyclicity of a generic graphic into a proof of finite cyclicity of a graphic surrounding a center. This is possible in quadratic systems since the center conditions are well known: indeed all graphics through a nilpotent point and surrounding a center occur in the stratum of reversible systems. The systems of this stratum are symmetric with respect to an axis, and are also Darboux integrable with an invariant line and an invariant conic. 
In practice, the Bautin trick consists in dividing a displacement map $V$ in a center ideal, i.e. in writing it as a finite sum of \lq\lq generalized monomials\rq\rq\ times non vanishing functions of the form 
\begin{equation}V(z)= \sum_{i=1}^n a_i m_i(1+h_i(z)),\label{type_V}\end{equation} where each $a_i$ belongs to the center ideal in parameter space, $m_i$ is a generalized monomial in $z$ and $h_i(z)=o(1)$ behaves well under derivation. 

To compute the displacement map, we write it as a difference of compositions of regular transitions and Dulac maps near the singular points. The Dulac maps are calculated in $C^k$ normalizing coordinates for a family unfolding the vector field. In this paper, we develop some general additional methods, which allow to prove the finite cyclicity of the graphic  $(I_{14}^1)$ (Figure~\ref{graphics}(a)). In particular, for the unfolding of this graphic, it is very helpful to be able to claim that all regular transitions are the identity in the center case. This is possible if we exploit the fact that the centers occur when the system is symmetric, and if we choose cleverly the sections on which the different transition maps are defined. 
Also, in the center case, the Dulac maps have a simple form since the system is Darboux integrable. 
\begin{figure}
\begin{center}
\qquad\subfigure[$(I_{14}^1)$]
{\includegraphics[width=3.2cm]{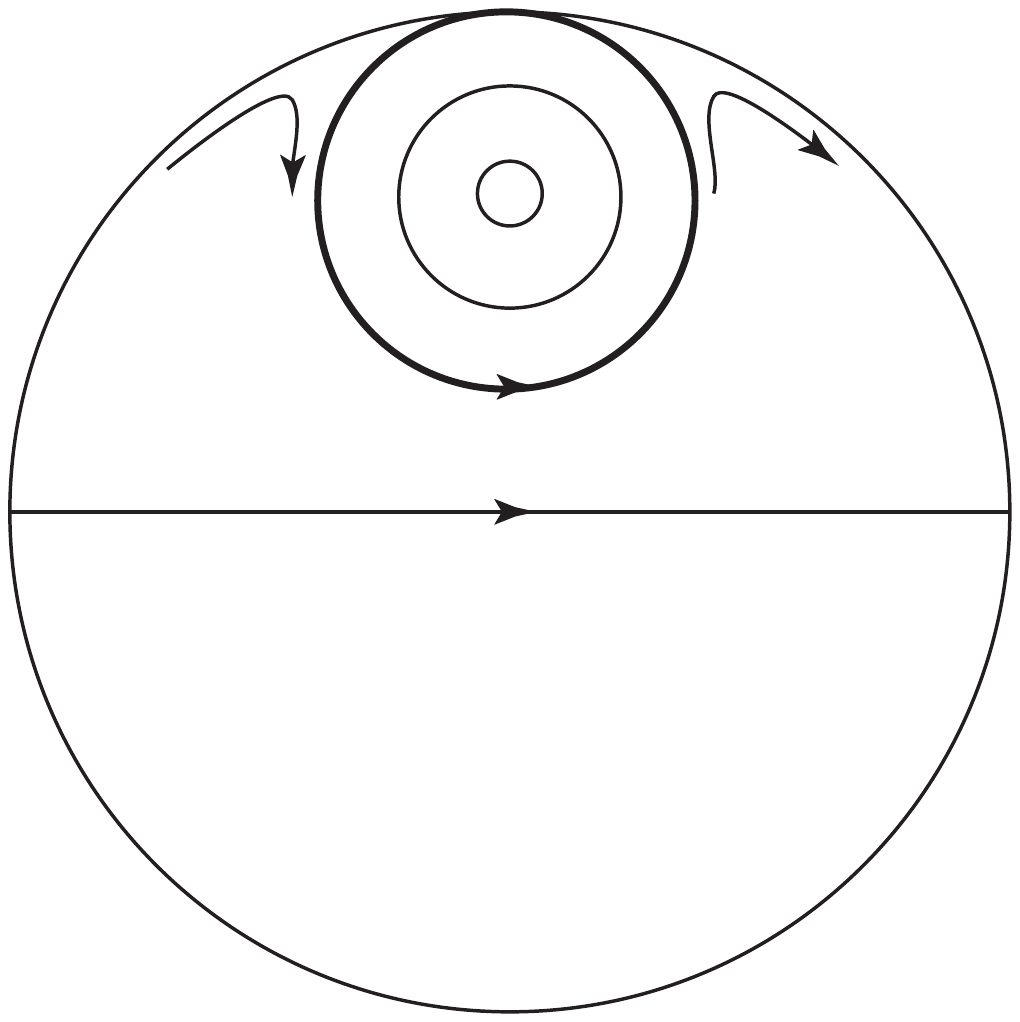}}\quad
\subfigure[$(I_{6b}^1)$]
{\includegraphics[width=3.2cm]{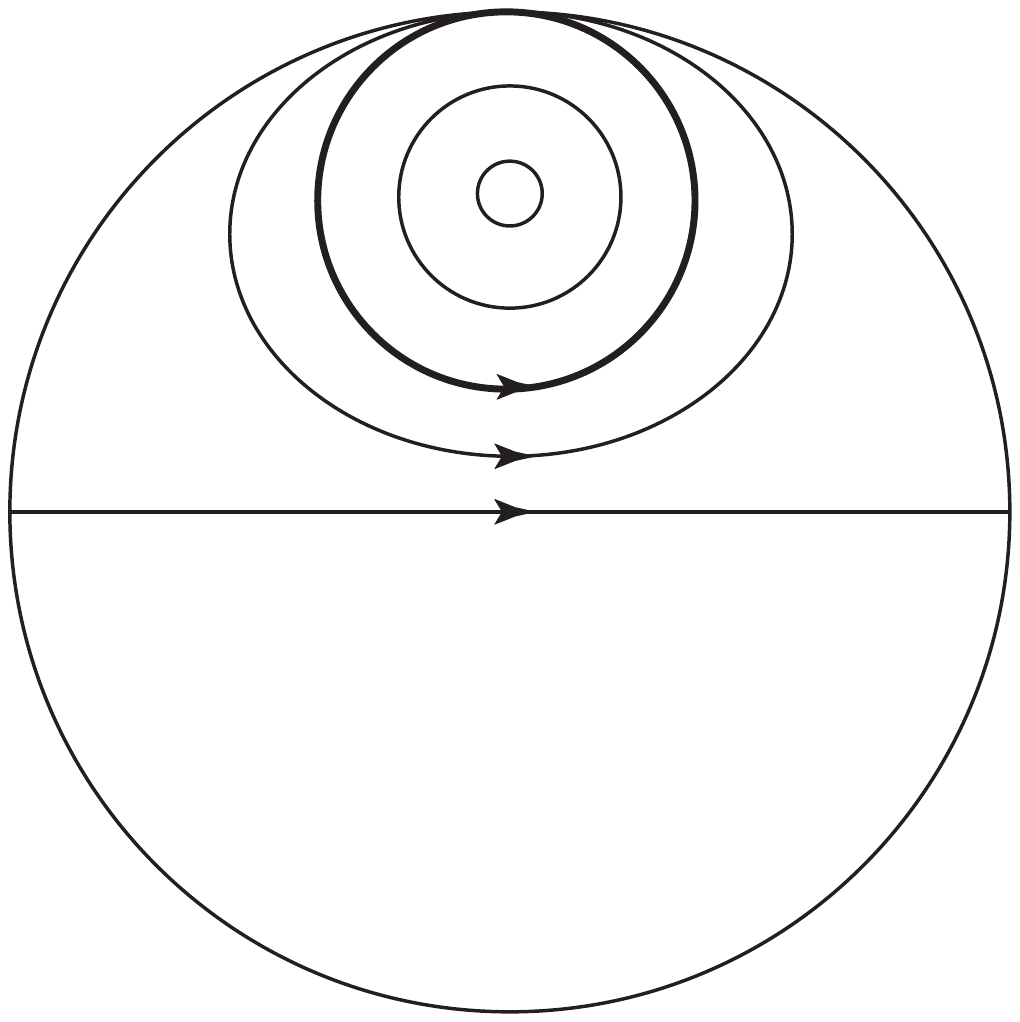}}\quad
\subfigure[$(H_{13}^3)$]
{\includegraphics[width=3.2cm]{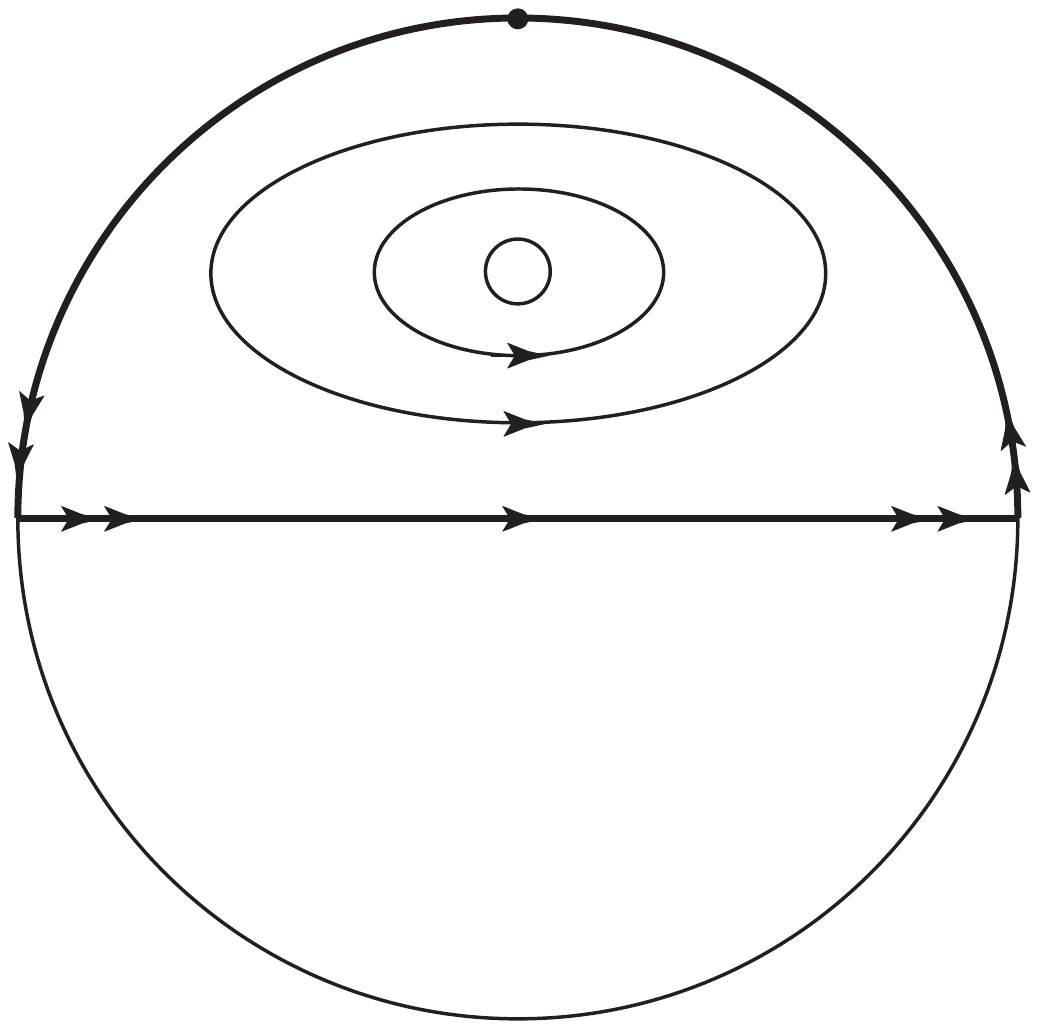}}\quad
\subfigure[$(DI_{2b})$]
{\includegraphics[width=3.2cm]{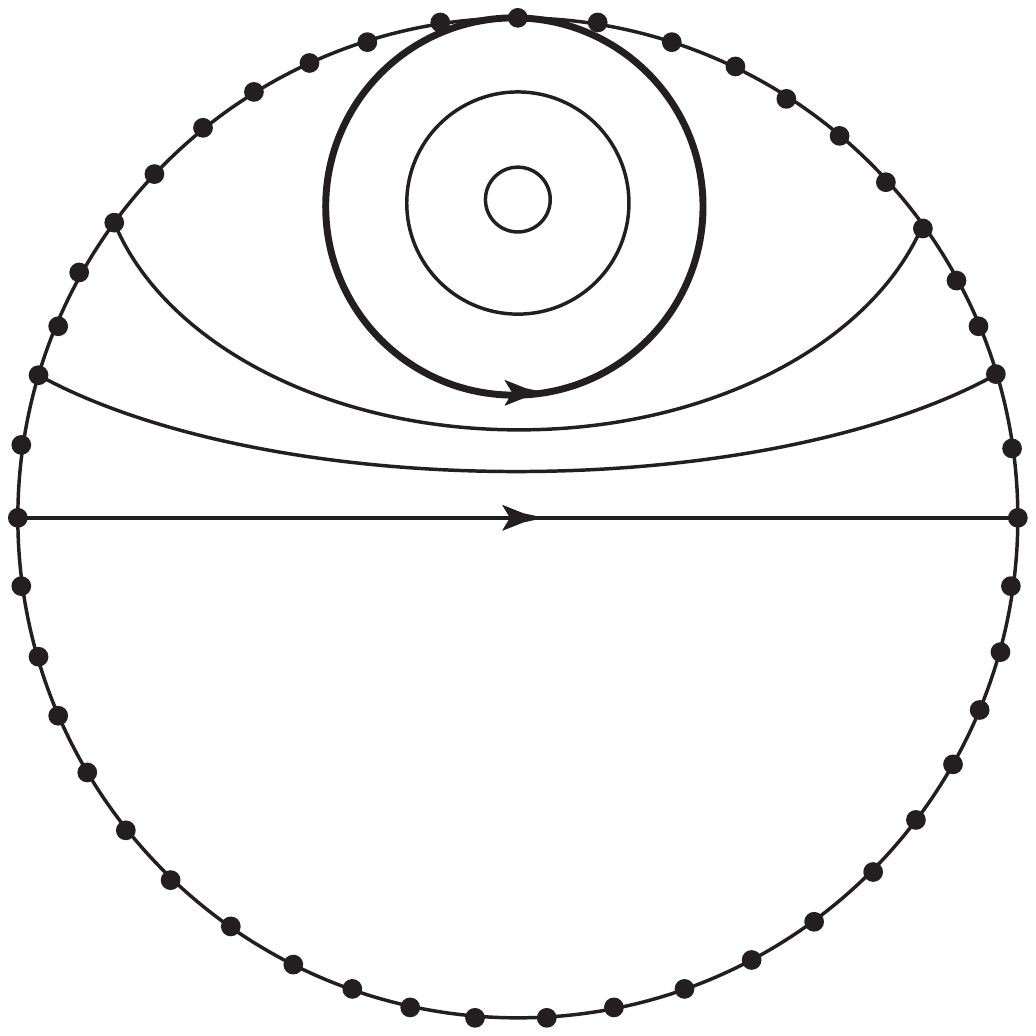}}
\caption{The graphics $(I_{14}^1)$, $(I_{6b}^1)$, $(H_{13}^3)$ and $(DI_{2b})$.}\label{graphics}
\end{center} \end{figure}
\vskip10pt 

The methods can be summarized as follows.
\begin{itemize} 
\item We highlight that the change to $C^k$ normalizing coordinates in the neighborhood of the singular points on the blow-up locus can be done by an operator. This allows preserving the symmetry in the center case when changing to normalizing  coordinates.
\item We introduce a uniform way of calculating the two types of Dulac maps when entering the blow-up through a much shorter proof than the one given in \cite{ZR}. 
\item Although each Dulac map is not $C^k$, we can divide in the center ideal its difference to the corresponding Dulac map in the integrable case.
\item The method of the blow-up of the family allows reducing the proof of finite cyclicity of the graphic to the proof that a certain number of limit periodic sets have finite cyclicity. These limit periodic sets are defined in the blown-up space. The ones obtained in blowing up a nilpotent saddle are shown in Table~\ref{tab.shhconvex}. For all of them but one (the boundary limit periodic set), we can reduce the displacement map to a $1$-dimensional map, the number of zeros of which can be bounded by the Bautin trick and a derivation-division algorithm on a map of type \eqref{type_V}. The boundary limit periodic set is more challenging, since we need to work with a 2-dimensional displacement map, the zeros of which we must study along the leaves of an invariant foliation coming from the blow-up. We introduce a generalized derivation operator, which allows performing a derivation-division algorithm on functions of the type 
\begin{equation}V(r,\rho)= \sum_{i=1}^n a_im_i(1+h_i(r,\rho)),\label{type_V2}\end{equation} 
where $h_i$ are ${\cal C}^k $-functions on monomials and $m_i$ are generalized monomials in $r$, $\rho$ (see definitions in Appendix II). During this process, we have to take into account  that $r\rho=\mathrm{Cst}$.
\end{itemize}

We have a partial  result for every graphic, but one (namely $(H^3_{14})$),  through a triple  point at infinity: 

\begin{theorem}\label{thMain1}
 Let us consider the graphics $(I^1_{14})$, $(I^1_{6b})$, $(H^3_{13})$  and $(DI_{2b})$ through a triple  point at infinity (see Figure 1). Then for any of them,  the boundary periodic limit set obtained in the blowing up   has  a finite cyclicity. 
\end{theorem}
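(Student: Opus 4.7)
The plan is to treat all four graphics in a uniform way by applying the blow-up machinery sketched in the introduction and then reducing the study of the boundary limit periodic set to a division-derivation argument for a $2$-dimensional displacement map of the form~\eqref{type_V2}.

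First, for each graphic I would perform the blow-up of the nilpotent triple point at infinity together with the blow-up of the family, so that the boundary limit periodic set becomes a limit periodic set of the blown-up $3$-dimensional singular foliation sitting on the boundary of the blow-up locus. I would identify the chain of singular points and regular arcs composing it, and set up transverse sections on which to define the first return map. Exploiting that a center occurs only in the reversible stratum (symmetric with respect to an axis, Darboux integrable with an invariant line and an invariant conic), I would place the sections symmetrically with respect to the axis of symmetry. Combined with the operator producing $C^k$ normalizing coordinates while preserving the symmetry, this choice forces every regular transition to reduce to the identity in the center case, so that the entire displacement map is the difference of two compositions of Dulac maps.

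Using the unified computation of the two types of Dulac maps at the singular points on the blow-up locus, and the fact that in the integrable case these maps are computable explicitly from the Darboux first integral, I would subtract off the center contribution in order to obtain an expression of the form
\[
V(r,\rho)=\sum_{i=1}^{n} a_i\, m_i(r,\rho)\bigl(1+h_i(r,\rho)\bigr),
\]
where each coefficient $a_i$ belongs to the Bautin (center) ideal in parameter space, each $m_i$ is a generalized monomial in the variables $(r,\rho)$ parametrizing the two sections attached to the boundary, and each $h_i$ is a $C^k$ function on monomials with $h_i=o(1)$, as in~\eqref{type_V2}.

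The decisive step is then to bound the number of isolated zeros of this two-variable $V$ along the leaves of the invariant foliation produced by the blow-up, here characterized by $r\rho=\mathrm{Cst}$. For this I would iterate the generalized derivation operator announced in the introduction: at each stage one divides out the generalized monomial of smallest order, differentiates along the foliation, and uses that the $h_i$ and their successive derivatives remain bounded along the leaves. Because the Bautin ideal in the quadratic reversible stratum has finite codimension, the algorithm terminates after finitely many steps and yields a uniform bound on the number of limit cycles accumulating on the boundary limit periodic set, hence its finite cyclicity. The main obstacle, and the reason the boundary case is considerably harder than the other limit periodic sets produced by the blow-up (for which a one-dimensional displacement map suffices), is precisely that the reduction to a single variable is unavailable: one must control a genuinely $2$-dimensional map while keeping track of the non-$C^k$ character of the individual Dulac maps and of the constraint $r\rho=\mathrm{Cst}$ at every step of the derivation-division process.
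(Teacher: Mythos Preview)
Your outline follows the same architecture as the paper's proof (Theorems~\ref{thm:boundary_graphic} and~\ref{thm:boundary_hemicycle}): blow up, choose symmetric sections so that regular transitions reduce to the identity in the center case, express the displacement as a finite sum $\sum a_i m_i(1+h_i)$ with $a_i$ in the center ideal, and apply a derivation--division argument along the leaves $r\rho=\nu$ via the vector field ${\cal X}=r\partial_r-\rho\partial_\rho$.

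However, several steps that you treat as automatic are in fact the substance of the proof.

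First, ``the algorithm terminates because the Bautin ideal has finite codimension'' is not the right reason. What the paper actually does is identify \emph{three explicit} generators $\eps_0,\eps_1,\ov\mu_3$ of the center ideal and verify by direct computation on the quadratic family (Lemmas~\ref{proof_eps_0}, \ref{proof_eps_1}, \ref{coef_rho}) that they arise as the leading coefficients of the constant term of $T$, the linear term of $T$, and the $\rho$-term of $F$ in $S$ respectively. Without this, you do not know that the sum in~\eqref{type_V2} has only three (or four) terms, nor that its coefficients really generate the ideal.

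Second, your description ``divide out the monomial of smallest order, differentiate, repeat'' hides the key hypothesis of Theorem~\ref{thderdiv}: the quotients $M_jM_i^{-1}$ must be \emph{non-resonant}, i.e.\ $a_j^0-a_i^0-b_j^0+b_i^0\neq 0$. When $\sigma_0=p\in\N$ this fails---the monomials $1$, $r^{p+\alpha}$, $r^{p-1+\alpha}$, $r^\alpha\omega_\alpha$ contain resonant pairs---and the paper needs separate arguments (Theorems~\ref{thpgeq2} and~\ref{thp1}, via the refined Lemma~\ref{lemdersingmon}) exploiting that certain remainders are $O(r^\delta)$ rather than merely $o(1)$. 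Your sketch gives no indication of how to handle this.

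Third, the four graphics are not uniform. For $(H_{13}^3)$ the boundary limit periodic set passes through $P_1,P_2$ rather than $P_3,P_4$, and the transition between the $\Pi_i$ is a composition $T_r^{-1}\circ D_r^{-1}\circ T\circ D_\ell\circ T_\ell$ involving two extra Dulac maps at saddles $P_\ell,P_r$ on the equator; the paper checks separately that the resulting exponent $\ov\sigma_2+\tau_\ell$ avoids the resonance.
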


Theorem \ref{thMain1} is not sufficient to prove that  the given graphic  has a finite cyclicity inside the family of quadratic vector fields. The reason is that, beside the boundary limit periodic set,   other limit periodic sets (see for instance Table~\ref{tab.shhconvex} for $(I_{14}^1)$) are obtained in the blowing up and, as explained above, we have to prove that each of them has also a finite cyclicity.  We present here a complete result  for the first graphic:

\begin{theorem}\label{thMain2}
 The graphic $(I^1_{14})$ has a finite cyclicity inside the family of quadratic vector fields.
\end{theorem}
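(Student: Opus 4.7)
The plan is to deduce finite cyclicity of $(I^1_{14})$ from Theorem~\ref{thMain1} via a blow-up of the family. After a quasi-homogeneous blow-up of the nilpotent triple point, performed in family, the graphic $(I^1_{14})$ lifts to a union of finitely many limit periodic sets of the resulting three-dimensional singular foliation; these are exactly the pieces enumerated in Table~\ref{tab.shhconvex}. A standard compactness argument bounds the cyclicity of $(I^1_{14})$ in the original quadratic family by the sum of the cyclicities of the lifted limit periodic sets, so the proof reduces to verifying finite cyclicity for each of them.

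The boundary limit periodic set is already handled by Theorem~\ref{thMain1}; this is the most delicate part of the argument, since its displacement map is intrinsically two-dimensional and must be analyzed along the leaves of an invariant foliation by means of the generalized derivation operator acting on expressions of type~\eqref{type_V2}. For every other (interior) limit periodic set in the table, the plan is uniform: decompose the first return map as an alternating composition of Dulac maps at the singular points on the divisor and regular transitions between cleverly chosen transversal sections. Because the centers lie in the reversible stratum and the change to $C^k$ normalizing coordinates can be performed by a symmetry-preserving operator, the sections can be placed so that in the center case every regular transition becomes the identity; combined with the explicit Darboux-integrable form of the center-value Dulac maps, this collapses the displacement to a one-dimensional function of a single radial parameter.

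To bound the zeros of the resulting one-dimensional displacement map, apply the Bautin trick. Subtracting the center-value displacement (which vanishes identically), one writes the difference in the form
\[V(z)=\sum_{i=1}^{n} a_i\, m_i(z)\bigl(1+h_i(z)\bigr),\]
where the $a_i$ are generators of the center ideal in parameter space, the $m_i$ are generalized monomials in $z$ and the $h_i$ are $C^k$ functions of order $o(1)$ behaving well under derivation. An iterated derivation-division algorithm adapted to the particular monomials $m_i$ then bounds the number of isolated zeros of $V$ near the relevant point on the divisor, hence the cyclicity of that limit periodic set.

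Two technical points will require care. First, the Dulac maps are not themselves $C^k$, so the division in the center ideal must be performed on the difference between the unfolded displacement and its center-value analogue, which is $C^k$; this is the strategy announced in the third bullet of the Introduction. Second, one must verify that the Bautin coefficients $a_i$ produced in each interior case really lie in the known center ideal of quadratic reversible systems, so that the derivation-division algorithm closes after finitely many steps. Assembling the finite bounds from each interior limit periodic set with the bound furnished by Theorem~\ref{thMain1} for the boundary case then yields the finite cyclicity of $(I^1_{14})$. The main obstacle is absorbed by Theorem~\ref{thMain1}; beyond it, the principal residual difficulty is ensuring that the coefficients produced by the one-dimensional Bautin division in each interior case indeed belong to the center ideal generated by the reversibility conditions, so that the algorithm terminates with the same set of generators across all pieces.
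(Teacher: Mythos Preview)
Your plan matches the paper's approach: blow up the family, reduce to the limit periodic sets of Table~\ref{tab.shhconvex}, treat the boundary case via Theorem~\ref{thMain1}, and handle the interior cases by a one-dimensional Bautin-type argument. However, what you present is a strategy rather than a proof, and the gap lies exactly where you flag the ``residual difficulty.''

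Two concrete points. First, you propose to treat \emph{every} interior limit periodic set uniformly by the Bautin trick. The paper instead observes that the finite cyclicity of most of these was already proved in \cite{ZR} \emph{without} using the genericity hypotheses; the only pieces requiring new work in the center case are the boundary set and the intermediate and lower limit periodic sets of Sxhh1 and Sxhh5. This is not merely bookkeeping: it focuses the problem on a short explicit list.

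Second, for those remaining cases the paper does not just assert that the coefficients lie in the center ideal---it computes them. Using the invariant parabola and a divergence integral (Lemmas~\ref{proof_eps_0}, \ref{proof_eps_1}), one gets $\tilde\eps_0=*\mu_4+O(\nu)O(\ov\mu_3)+\mu_5O(M)$ and $\tilde\eps_1=\nu^{\ov\sigma_3}(*\mu_5+O(\mu_4)+O(\nu)O(\ov\mu_3))$, so that $\{\tilde\eps_0,\tilde\eps_1,\ov\mu_3\}$ generate $I_C$. The third generator $\ov\mu_3$ is extracted not from a transition map but from the \emph{nonlinearity} of $S$: the intermediate graphics belong to a family terminating at a lower graphic whose saddle has hyperbolicity ratio $\tau\neq1$ precisely when $\ov\mu_3\neq0$, forcing $S^{(n)}(\ov x_{3,0})=c_{n,3}\ov\mu_3\neq0$ for some $n$. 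This dynamical input is what makes the derivation-division algorithm terminate; your sketch does not supply any mechanism for producing the $\ov\mu_3$-term, and without it the algorithm need not close.
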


As for the finite cyclicity of the other graphics $(I_{6b}^1)$, $(H_{13}^3)$
and $(DI_{2b})$, we intend to address the problem in the next future. 
The finite cyclicity of $(H_{13}^3)$ should be straightforward with arguments identical to those used for $(I_{14}^1)$. It will be done simultaneously with the corresponding generic graphic $(H_{12}^3)$.
Some of the limit periodic sets to be studied for $(I_{6b}^1)$ will involve four Dulac maps of second type. For these limit periodic sets, it is not possible to reduce the study of the cyclicity to a single equation. Hence, new methods will need to be adapted to treat the center case, when the periodic solutions correspond to a system of two equations in the four variables $r_1, \rho_1, r_2, \rho_2$, with $r_1\rho_1=\nu_1$ and $r_2\rho_2=\nu_2$. As for the graphic $(DI_{2b})$, some of the limit periodic sets to be studied involve four Dulac maps of second type, two of them through the semi-hyperbolic points $P_1$ and $P_2$ on the blown-up sphere. 

The techniques developed in this paper can be adapted for studying the boundary limit periodic sets of graphics of the DRR  program through a nilpotent finite singular point. The only new difficulty in that case is to show that the three parameters of the leading terms in the displacement map do indeed generate the center ideal. We also hope to adapt them to study the boundary graphic of the hemicycle $(H^3_{14})$: there, the additional difficulty is the two semi-hyperbolic points along the equator. 

\bigskip

 Proofs of Theorems \ref{thMain1} and \ref{thMain2} are given in Section 3 and Appendix  II,  where the detailed computations of cyclicity are found in Theorems \ref{thderdiv}, \ref{thpgeq2} and \ref{thp1}.  Theorem \ref{thnormalformhyp} in  Appendix I, gives a statement about normal form for 3-dimensional hyperbolic saddle points in  a way adapted to this paper. Theorem \ref{thtransgeneralhypsaddle} of the same appendix gives a new  proof for Dulac transitions near these saddle points, shorter than the one given in   \cite{ZR}. Precise properties for the specific unfoldings deduced from the quadratic family are proved in  Appendix III.  These properties of some parameter functions are needed to obtain the results of finite cyclicity.

\section{Preliminaries}

\subsection{Normal form for the unfolding of a nilpotent triple point of saddle or elliptic type} We consider graphics through one singular point, which is a triple nilpotent point of saddle or elliptic type. A germ of vector field in the neighborhood of such a point has the form
\begin{align}\begin{split}
\dot x&=y \\
\dot y&=\pm x^3+ bxy +\eta x^2y + yO(x^3)+ O(y^2).\end{split}\label{normal_form_DRS}\end{align}
The saddle case corresponds to the plus sign, and the elliptic case to the minus sign with $|b|\geq 2\sqrt{2}$. In the elliptic case, we limit ourselves here to the case $|b|>2\sqrt{2}$, which corresponds geometrically to a nilpotent point with hyperbolic points on the divisor of the quasi-homogeneous blow-up.

The unfolding of such points has been studied by Dumortier, Roussarie and Sotomayor, \cite{DRS}, including a normal form for the unfolding of the family. A different normal form has been used in \cite{ZR} for studying the finite cyclicity of generic graphics through such singular points, when we limit ourselves to $|b|>2\sqrt{2}$ in the elliptic case. This normal form is particularly suitable for applications in quadratic vector fields, where there is always an invariant line through a nilpotent point of multiplicity $3$.

\bigskip
A germ of $C^\infty$ vector field in the neighborhood of a  nilpotent point of multiplicity $3$ of saddle or elliptic type can be brought by an analytic change of coordinates to the form
\begin{align}\begin{split}
\dot x&=y+ ax^2, \\
\dot y&=y(x + \eta x^2 +o(x^2) + O(y)).\end{split}\label{normal_form_ZR}\end{align}
This requires an additional change of variable and scaling compared to what has been done in \cite{ZR}.
The point is a nilpotent saddle when $a<0$ and a nilpotent elliptic point when $a>0$ (see Figure~\ref{fig.topotype}). The case $|b|=2\sqrt{2}$ corresponds to $a=\frac12$.
\begin{figure}[ht]\begin{center}
    \subfigure[Saddle case]
    {\includegraphics[angle=0, width=5cm]{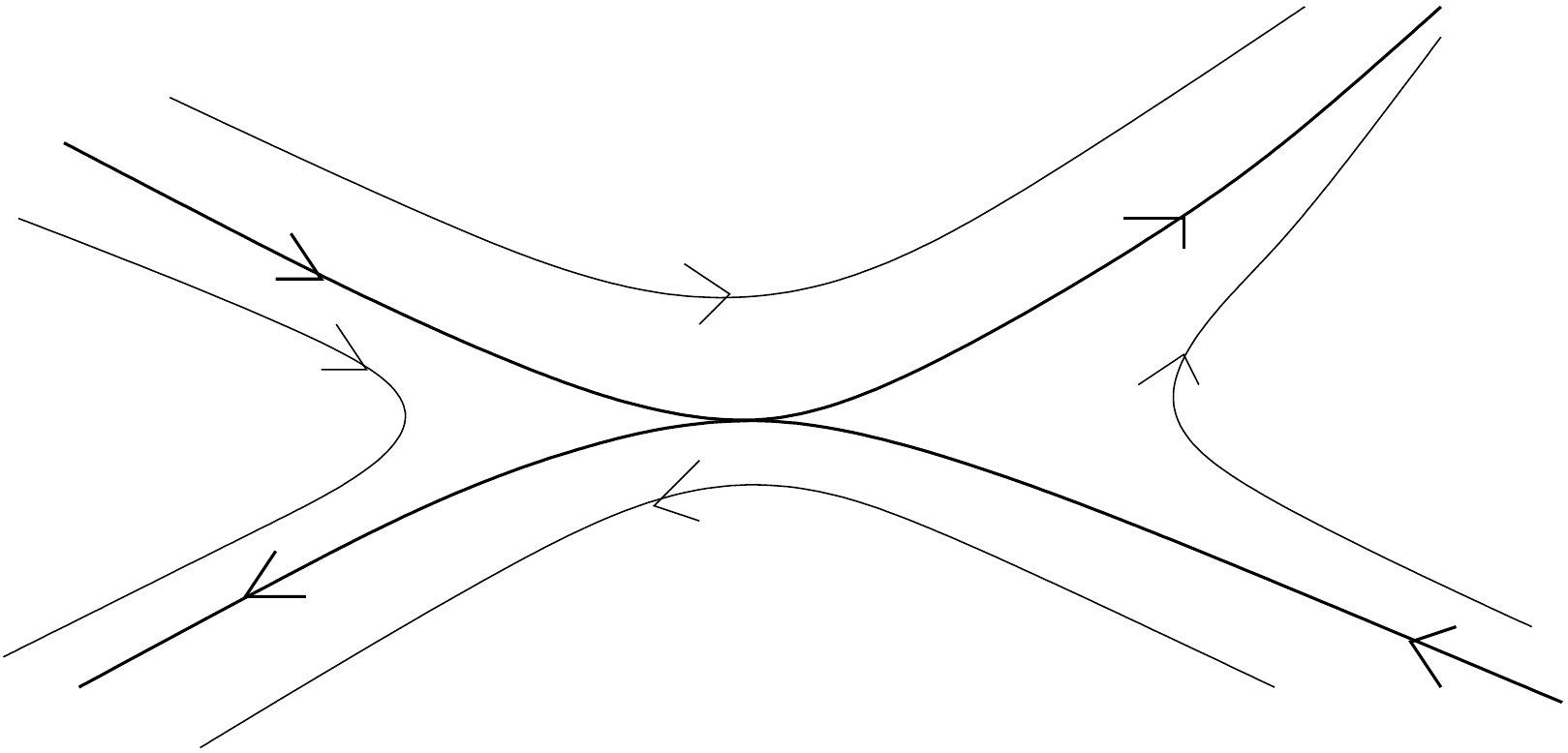}}\qquad\qquad
 \subfigure[Elliptic case]
    {\includegraphics[angle=0, width=5cm]{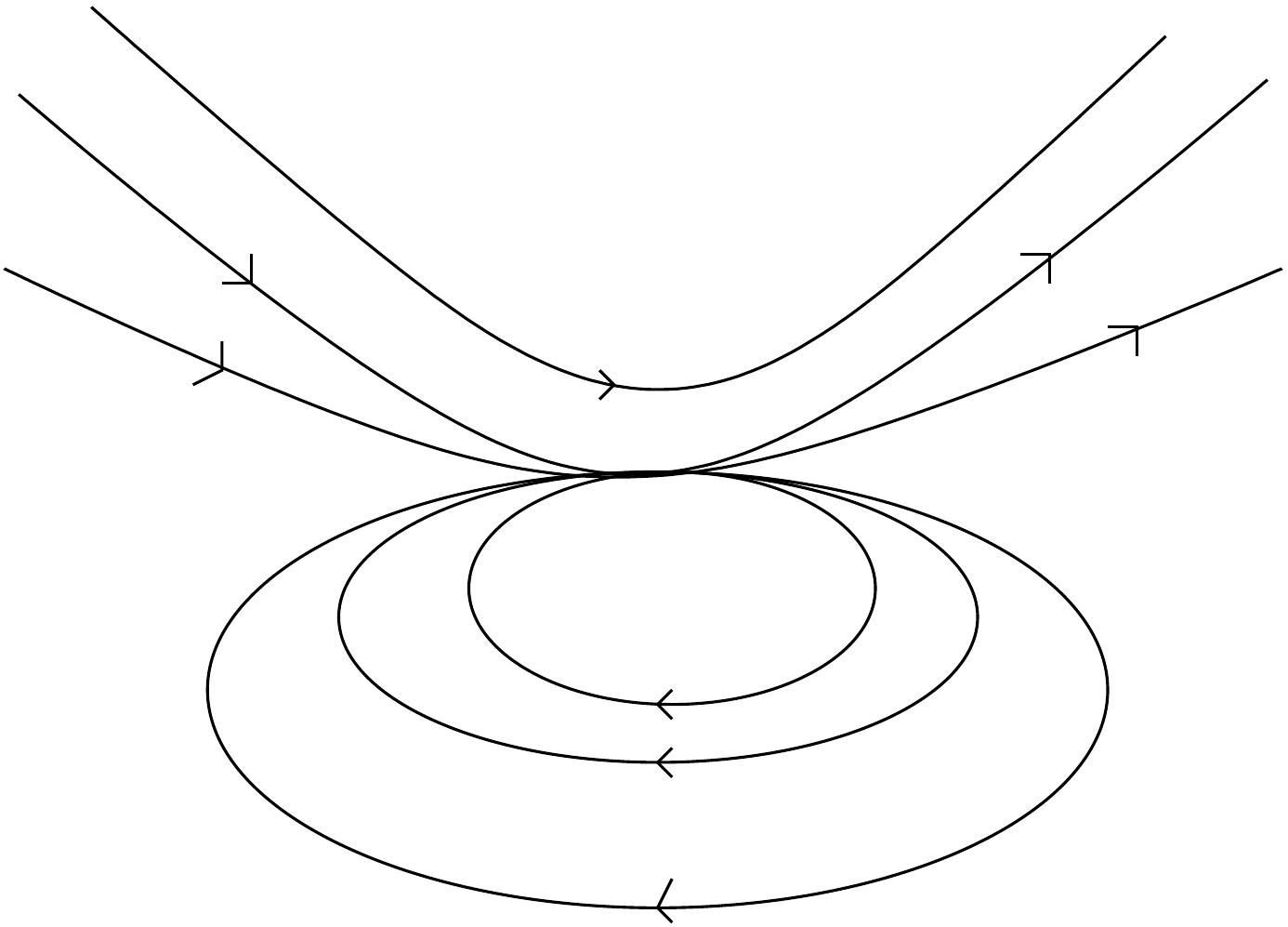}}   
\caption{The different topological types}
\label{fig.topotype}\end{center}\end{figure}

\bigskip

For $a\neq\frac12$, a generic unfolding depending on a multi-parameter $\lambda= (\mu_1,\mu_2,\mu_3, \mu)$ has the form 
\begin{align}\begin{split}
\dot x&=y+ a(\lambda)x^2 +\mu_2, \\
\dot y&=\mu_1+\mu_3 y + x^4h_1(x,\eps)+y(x + \eta x^2 +x^3 h_2(x,\lambda))+ y^2Q(x,y,\lambda),\end{split}\label{normal_form_family}\end{align}
where $h_1(x,\lambda)= O(|\lambda|)$. Moreover, $h_1, h_2, Q$ are $C^\infty$ functions, and $Q$ can be chosen of arbitrarily high order in $\lambda$. 

\subsection{Finite cyclicity of a graphic}
\begin{definition} A \emph{graphic} $\Gamma$ of a vector field $X_0$ , i.e. a union of trajectories and singular points, has \emph{finite cyclicity} inside a family $X_\lambda$ if there exists $N\in\mathbb N$, $\eps>0$ and $\delta>0$ such that any vector field $X_\lambda$ with $|\lambda|<\delta$ has at most $N$ periodic solutions  at a Hausdorff distance less than $\eps$ from  $\Gamma$. If a graphic has a finite cyclicity, its \emph{cyclicity} is the minimum of such numbers $N$.
\end{definition}

This means that when studying the finite cyclicity of a graphic $\Gamma$, we need to find a uniform bound for the number of periodic solutions that can appear from it, for all values of the multi-parameter in a small neighborhood $W$ of the origin. Typically we need to find a uniform bound for the number of fixed points of the Poincar\'e return map or, equivalently, for the number of zeros of some displacement map between two transversal sections to the graphic. With graphics containing a nilpotent singular point there is no way to make a uniform treatment for all $\lambda\in W$, and we typically cover $W$ by a finite number of sectors, on each of which we give a uniform bound. The method for doing this is the \emph{blow-up of the family}, which was first introduced  in \cite{R},
and next applied to slow-fast systems in \cite{DR}.

\subsection{Blow-up of the family}
We take the neighborhood of the origin in parameter-space of the form $\S^2\times [0,\nu_0)\times U$, where $U$ is a neighborhood of $0$ in $\mu$-space and we make the change of parameters
\begin{equation}
(\mu_1,\mu_2,\mu_3)= (\nu^3\overline{\mu}_1,\nu^2\overline{\mu}_2,\nu\overline{\mu}_3),\label{change_parameters}\end{equation}
where $\ov{M}=(\overline{\mu}_1,\overline{\mu}_2,\overline{\mu}_3)\in\S^2$ and $\nu\in [0,\nu_0)$.

\bigskip

Note that $\S^2$ is compact. Hence, to give an argument of finite cyclicity for the graphic $\Gamma$, it suffices to find  a neighborhood of each $\ov{M}=(\overline{\mu}_1,\overline{\mu}_2,\overline{\mu}_3)\in\S^2$ inside $\S^2$, a corresponding $\nu_0>0$ and a corresponding $U$ on which we can give a bound for the number of limit cycles. In our study, we will consider special values $a_0$ of $a$. It is important to note that $a(\lambda)$ depends on $\lambda$, and hence that $a-a_0$ is in some sense a parameter in itself. 

\bigskip

The way to handle this program is to do a \emph{blow-up of the family.}  For this, we introduce the weighted blow-up of the singular point $(0,0,0)$ of the three-dimensional family of vector fields,  obtained  by adding the equation $\dot \nu=0$  to \eqref{normal_form_family}. The blow-up transformation is given by 
\begin{equation}
(x,y,\nu) = (r\ov{x}, r^2\ov{y}, r\rho),\label{blow-up_family}\end{equation}
with $r>0$ and $(\ov{x},\ov{y},\rho)\in \S^2$.
After dividing by $r$  the transformed vector field, we get a family of $C^\infty$ vector fields $\ov{X}_{A}$, depending on the parameters $A=(a- a_0,\ov{M}, \mu)$. The foliation $\{\nu=r\rho=\mathrm{Cst}\}$ is invariant under the flow. The leaves $\{r\rho=\nu\}$, with $\nu>0$, are regular two-dimensional manifolds, while the critical locus $\{r\rho=0\}$ is stratified and contains the two strata (see Figure~\ref{fig.strat}):
\begin{itemize} 
\item $\S^1\times \R^+$ is the blow-up of $X_0$ (for $\lambda=0$);
\item $D_{\ov{\mu}}= \{\ov{x}^2+\ov{y}^2+\rho^2=1\mid \rho \geq0\}$,  for any $\ov{\mu}\in \S^2$.
\end{itemize}

\subsection{Limit periodic sets in the blown-up family}
 The vector field  $\ov{X}_A$ has  singular points on $r=\rho=0$. For $a\neq \frac12$, there will be four distinct singular points (occuring in two pairs) corresponding to $\ov{y}=0$ (for $P_1$ and $P_2$) and $\ov{y}=\frac{1-2a}{2}$ (for $P_3$ and $P_4$): see Figure~\ref{fig.strat}. Their eigenvalues appear in Table~\ref{eigenvalue}.
\begin{figure}[ht]\begin{center}
\subfigure[The saddle case]
   {\includegraphics[angle=0,width=6cm]{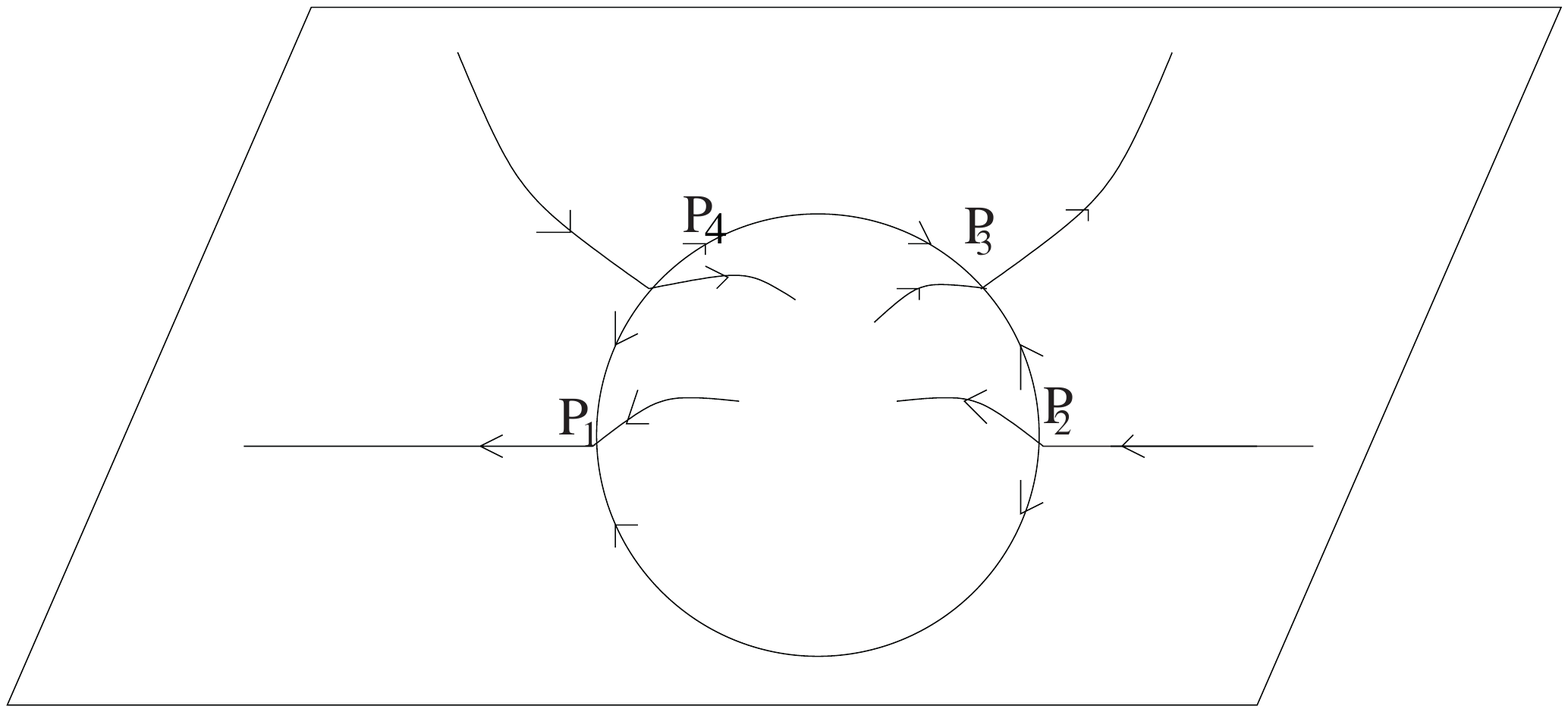}}\qquad
   \subfigure[The elliptic case]
   {\includegraphics[angle=0,width=6cm]{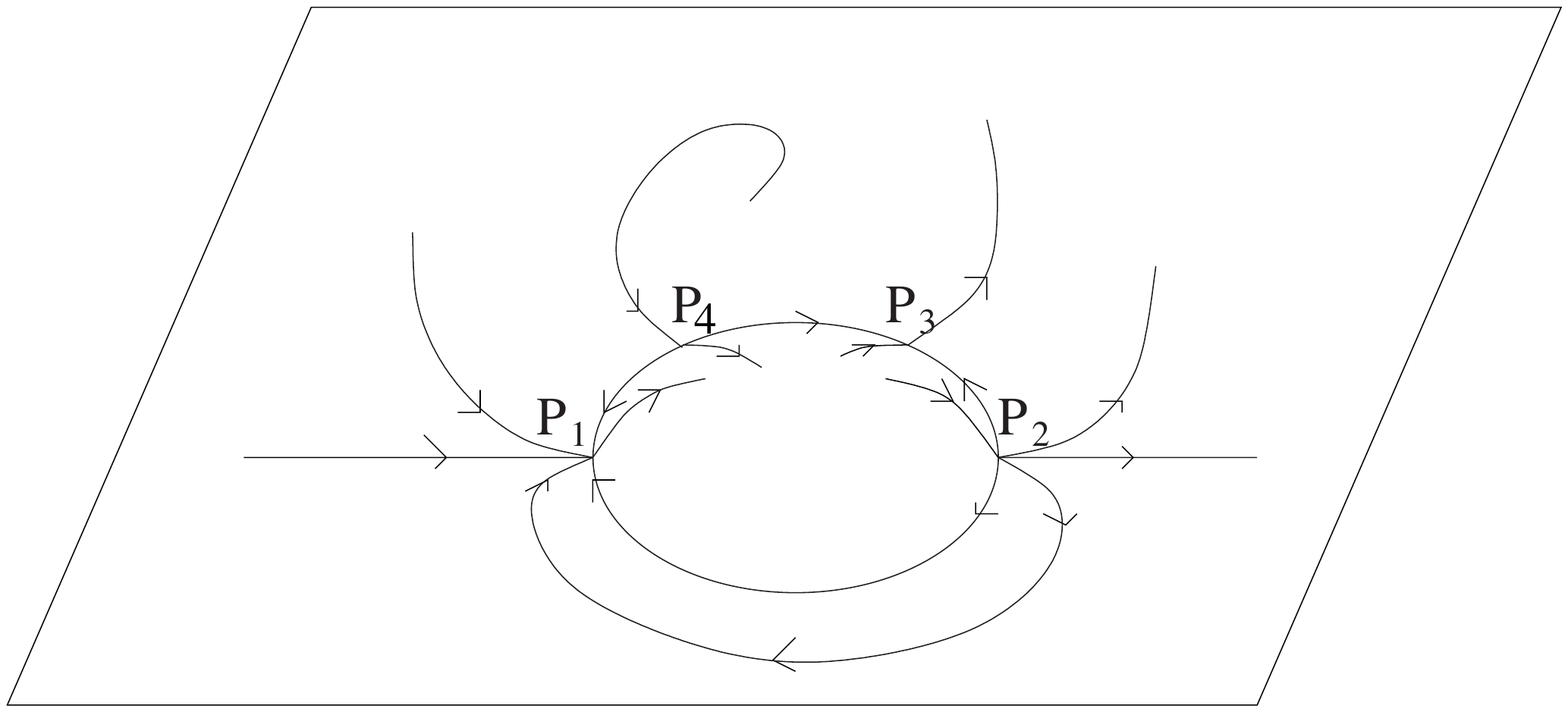}}
\caption{The stratified set $\{r\rho=0\}$ in the blow-up}
\label{fig.strat}\end{center}\end{figure}
\begin{table}[ht]\begin{center}\begin{tabular}{|c|c|c|c|}
\hline
      &  $r$       &  $\rho$      &    $ y $          \\ \hline
$P_1$ & $   -a   $ &  $\ \ a  $   &    $-(1-2a)$      \\ \hline
$P_2$ & $\ \ a   $ &  $   -a  $   &    $\ \ (1-2a)$   \\ \hline
$P_3$ & $\ \ 1/2 $ &  $ -1/2  $   &    $-(1-2a)$      \\ \hline
$P_4$ & $    -1/2$ &  $\ \ 1/2$   &    $\ \ (1-2a)$   \\ \hline
\end{tabular}\label{eigenvalue}
\caption{The eigenvalues at $P_i$ ($i=1,2,3,4$)}
\end{center}\end{table}

\bigskip

{\it We will study the finite cyclicity of a  graphic $\Gamma$ joining a pair of opposite points $P_i$ and $P_{i+1}$ in $\ov{X}$, with $i=1$ or $i=3$. }
We consider a particular value $A_0=(a_0, \ov{M}_0, \mu_0)$. Here is the strategy for finding an upper bound for the number of limit cycles that appear for $A$ in a neighborhood of $A_0$. We determine the phase portrait of the family rescaling \eqref{family_rescaling} on $D_{\ov{\mu}}$: this allows determining \emph{limit periodic sets} $\ov{\Gamma}$, which are formed by the union of $\Gamma$ with a finite number of trajectories and singular points on $D_{\ov{\mu}}$ joining $P_i$ and $P_{i+1}$, so that their orientation be compatible with that of $\Gamma$. The limit periodic sets to be studied appear in Table~\ref{tab.shhconvex} for the saddle case.  They come from studying the phase portrait of the \emph{family rescaling} 
\begin{align} \begin{split} 
\dot{\ov{x}}&= \ov{y}+a\ov{x}^2+\ov{\mu}_2,\\
\dot{\ov{y}}&=\ov{\mu}_1+\ov{\mu}_3\ov{y} +\ov{x}\,\ov{y},\end{split}\label{family_rescaling}
\end{align} 
obtained by putting $\rho=1$ and $r=0$. It then suffices to show that each limit periodic set has finite cyclicity, i.e. to show the existence of an upper bound for the number of periodic solutions of $\ov{X}_A$ for $A$ in a small neighborhood of $A_0$. 
\begin{table}[ht]
\begin{center}\begin{tabular}{|c|c|c|}
\hline
   \includegraphics[angle=0, width=3.2cm]{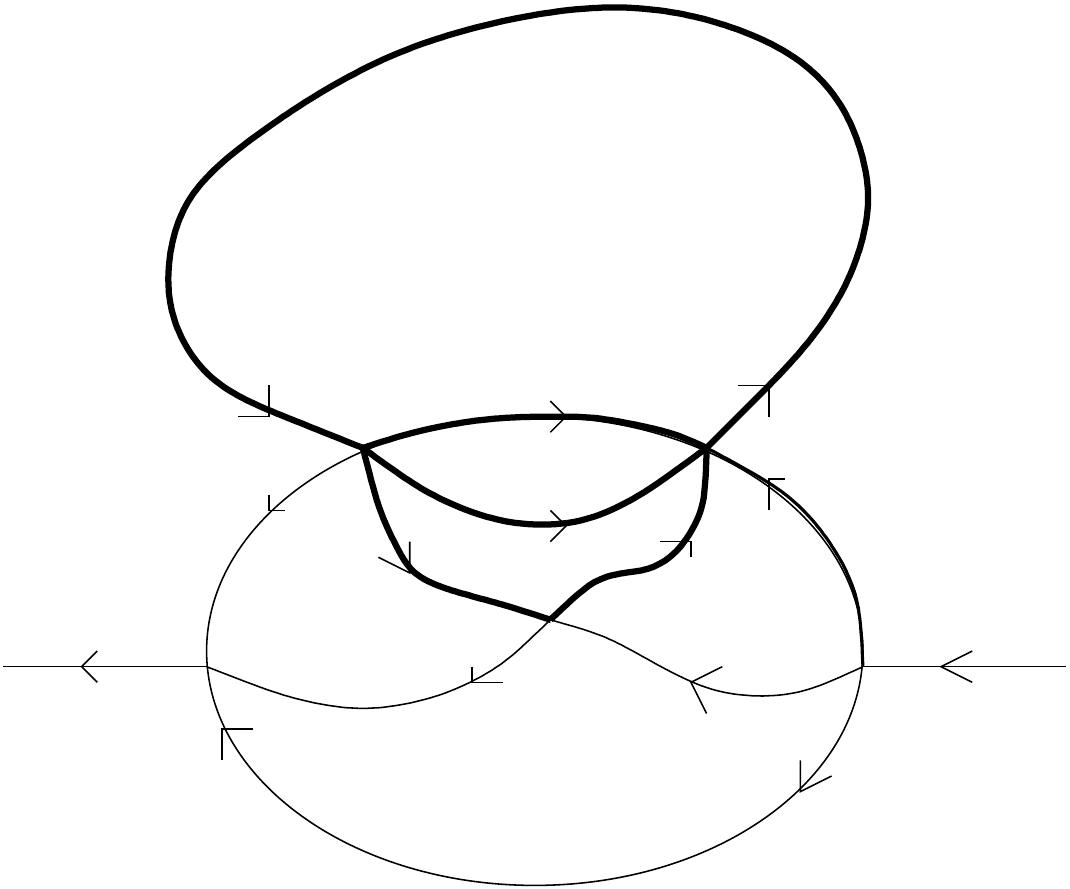}
&  \includegraphics[angle=0,width=3.2cm]{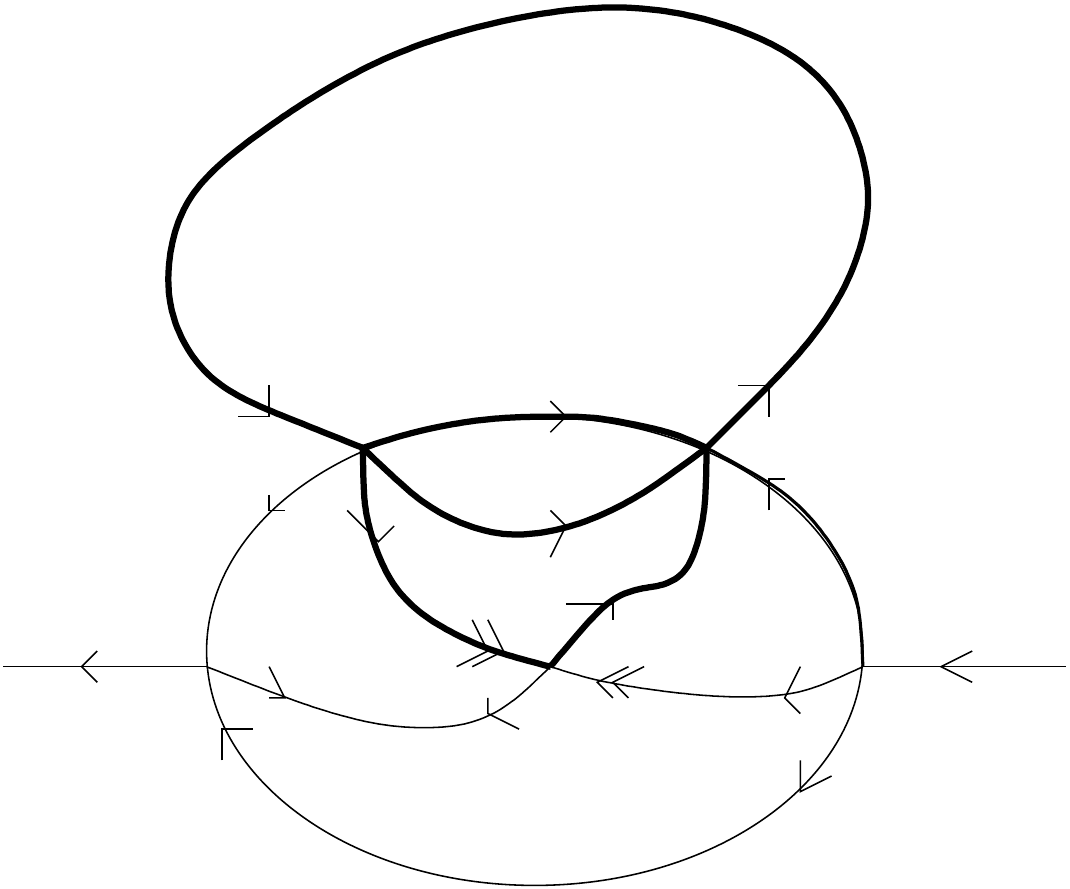}
&  \includegraphics[angle=0,width=3.2cm]{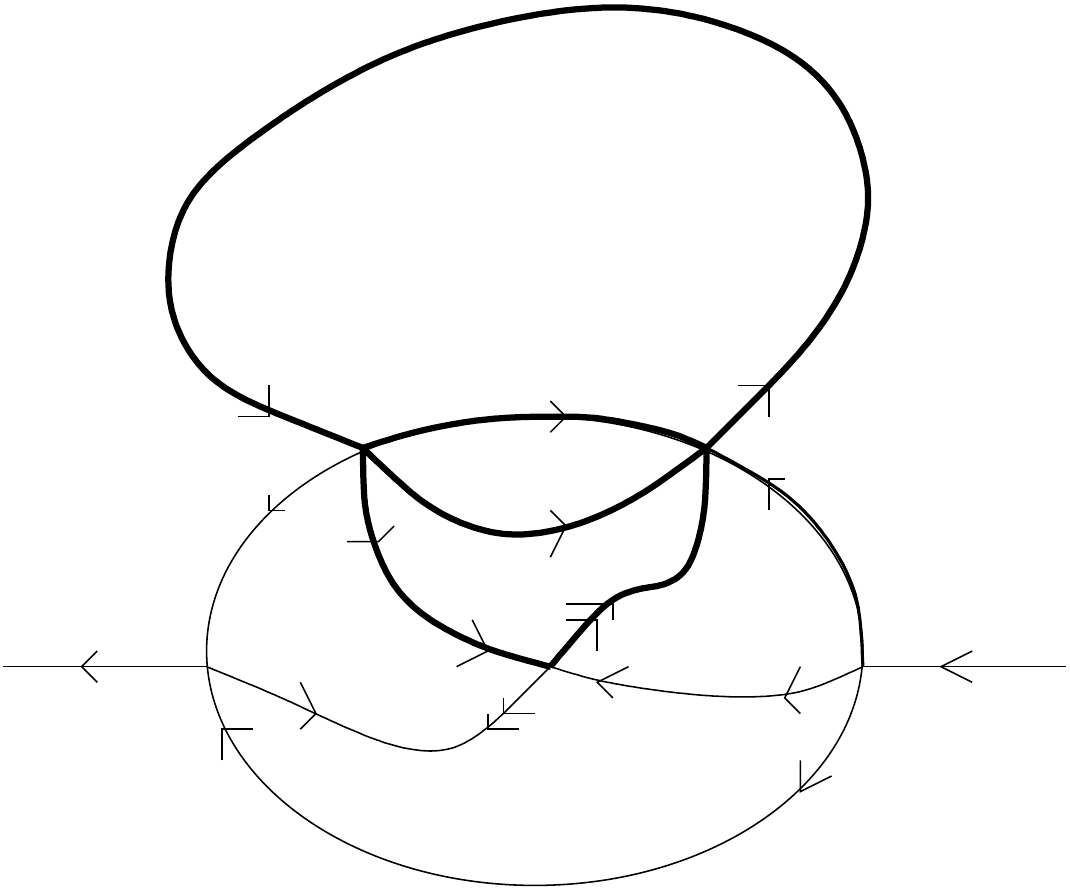}
\\ \hline   Sxhh1  &  Sxhh2 &  Sxhh3
\\ \hline
   \includegraphics[angle=0,width=3.2cm]{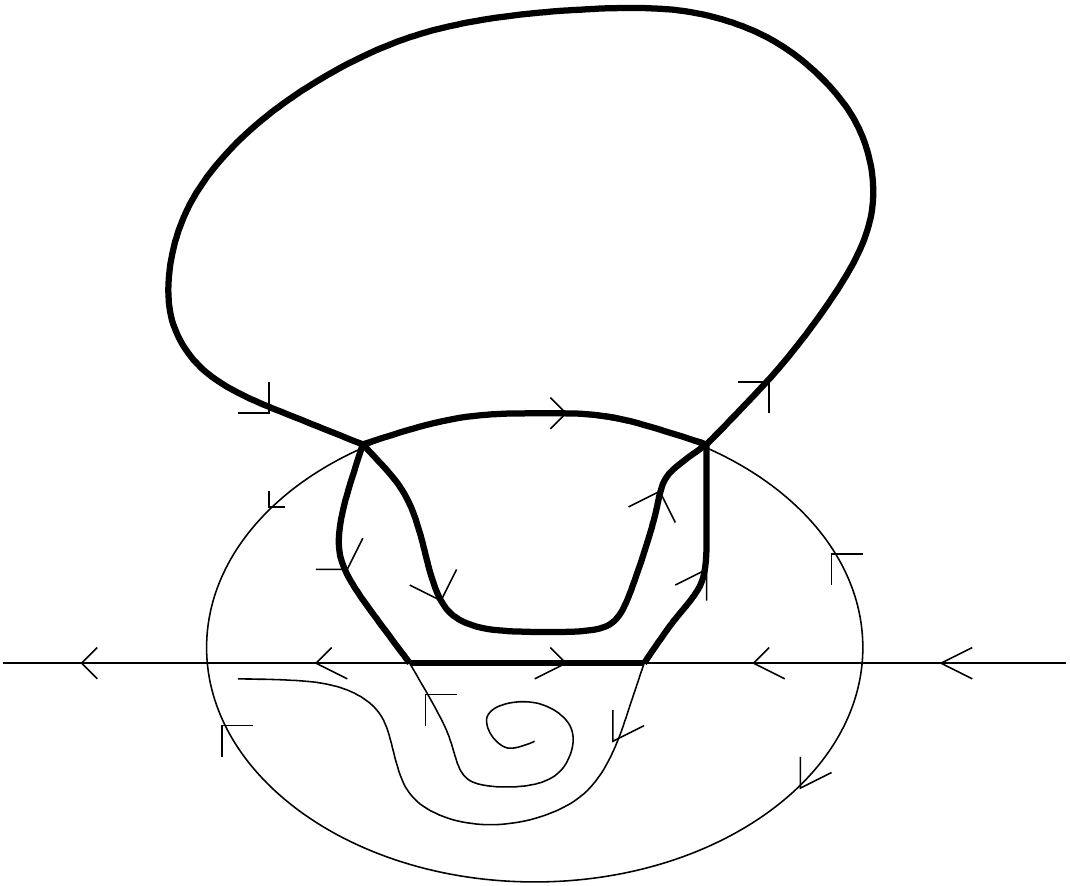}
&  \includegraphics[angle=0,width=3.2cm]{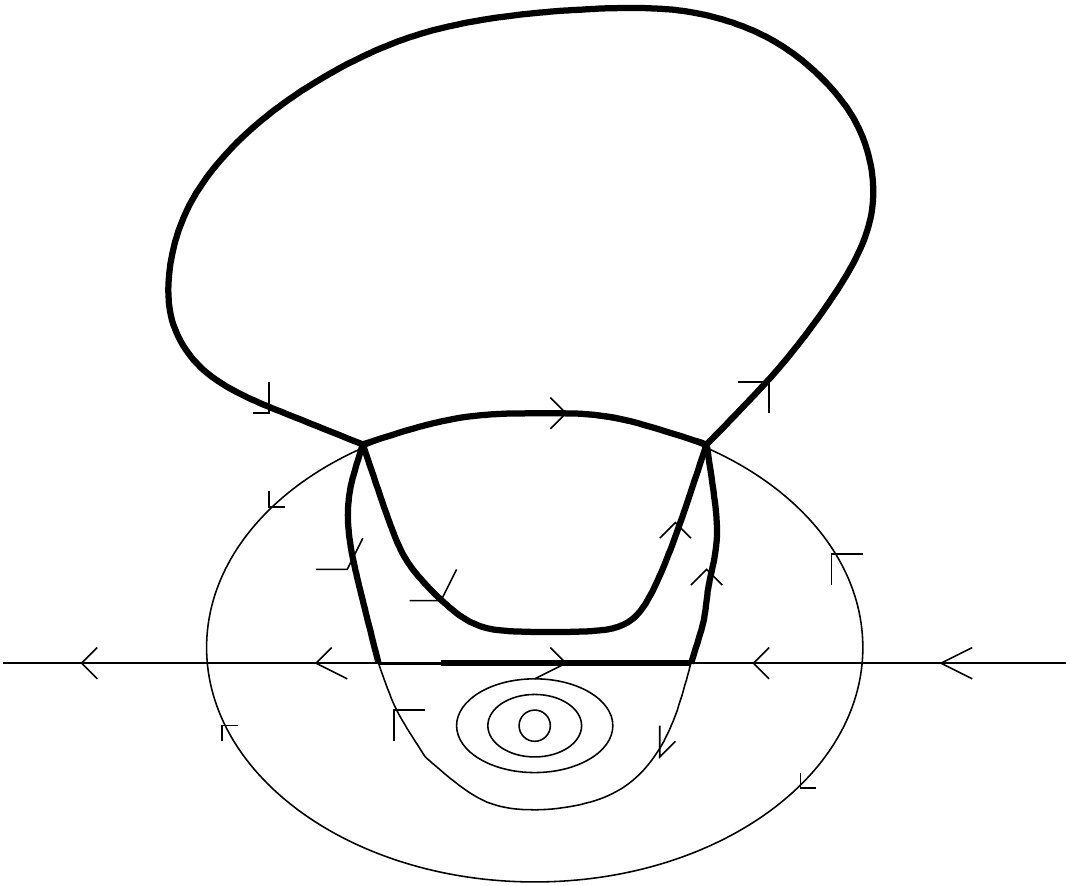}
&  \includegraphics[angle=0,width=3.2cm]{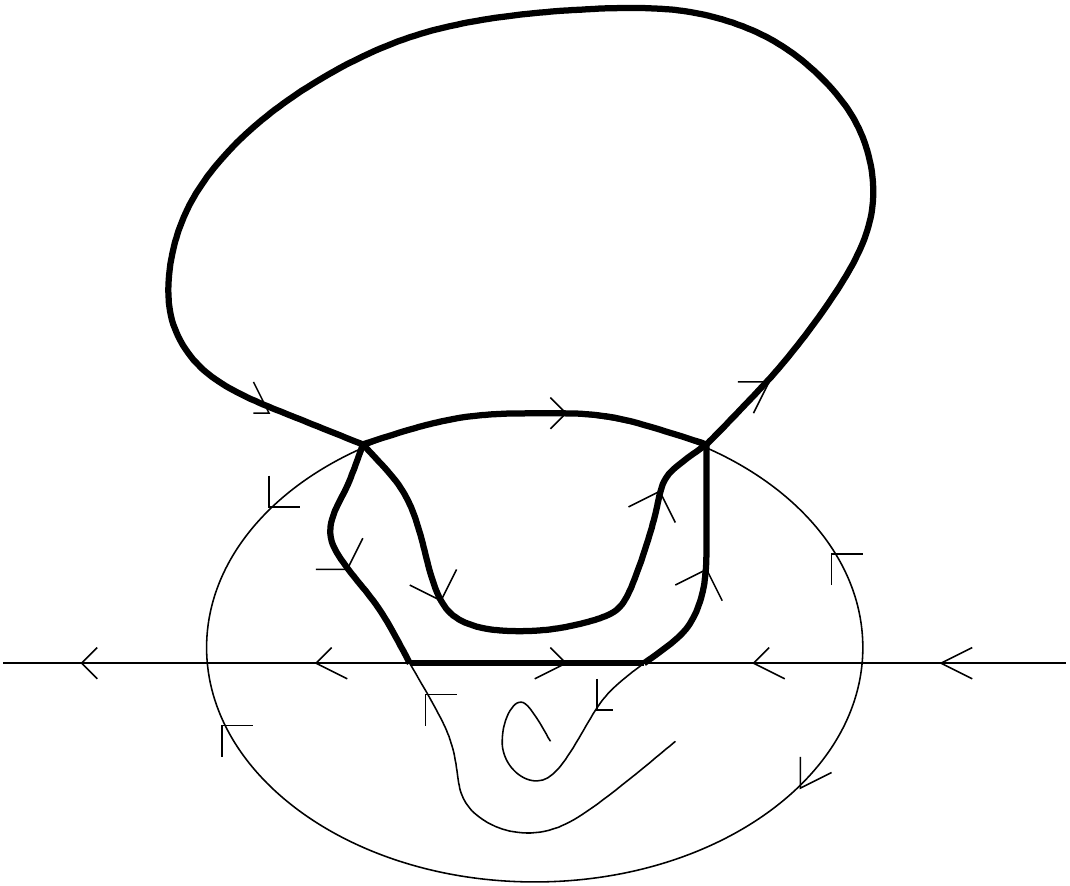}
\\ \hline  Sxhh4  &  Sxhh5  &  Sxhh6
\\ \hline
   \includegraphics[angle=0,width=3.2cm]{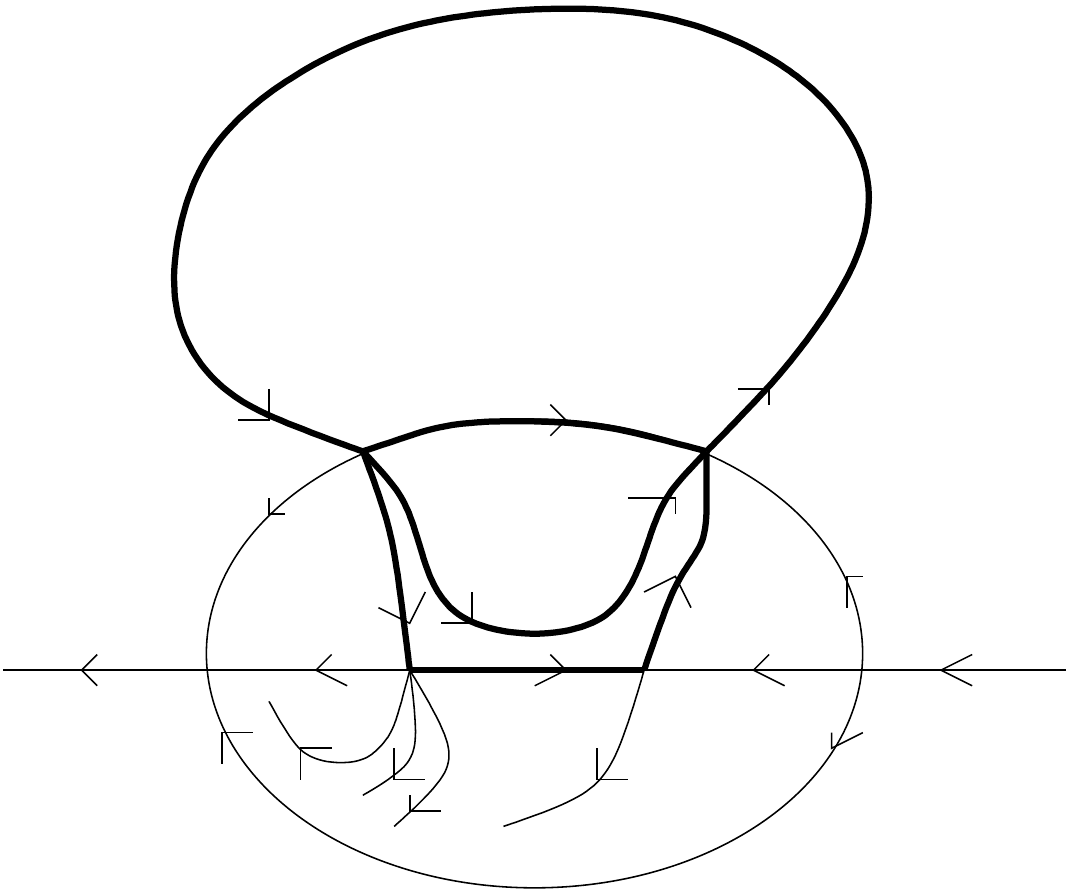}
&  &\includegraphics[angle=0,width=3.2cm]{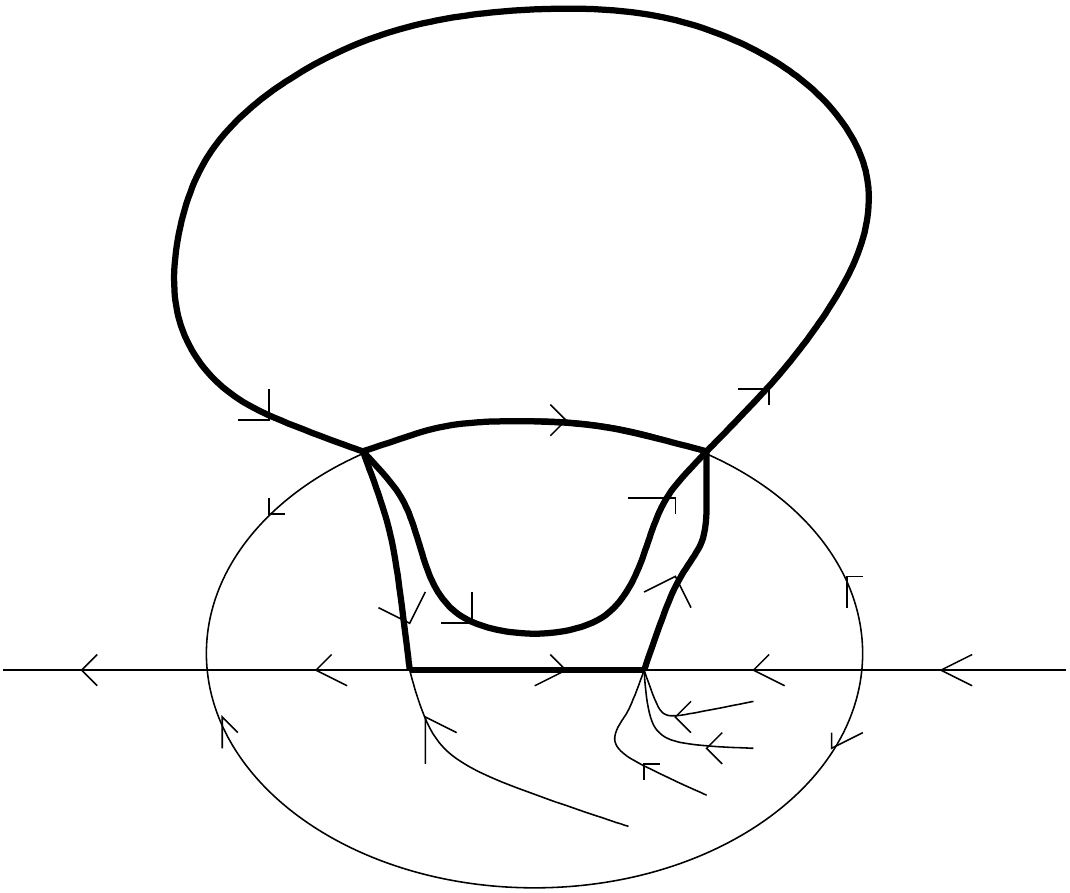}
\\  \hline  Sxhh7  &              &  Sxhh8
\\  \hline
   \includegraphics[angle=0,width=3.2cm]{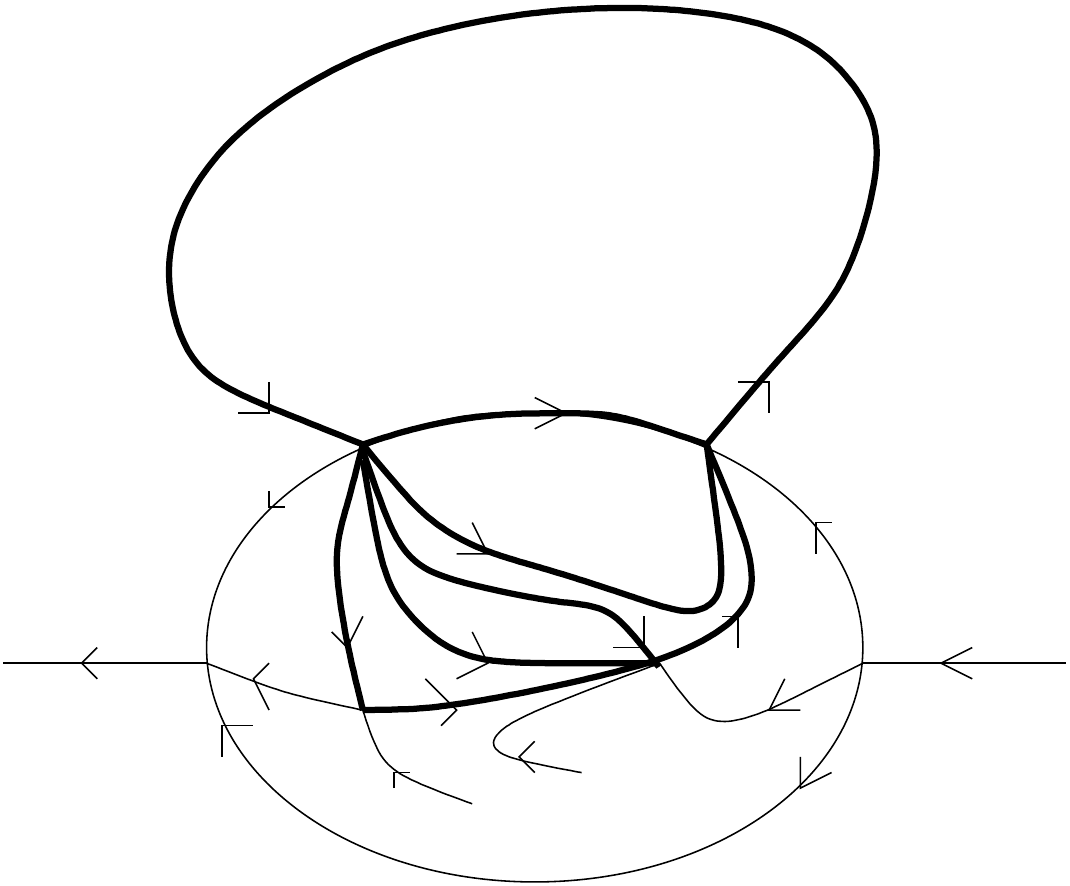}
&  &\includegraphics[angle=0,width=3.2cm]{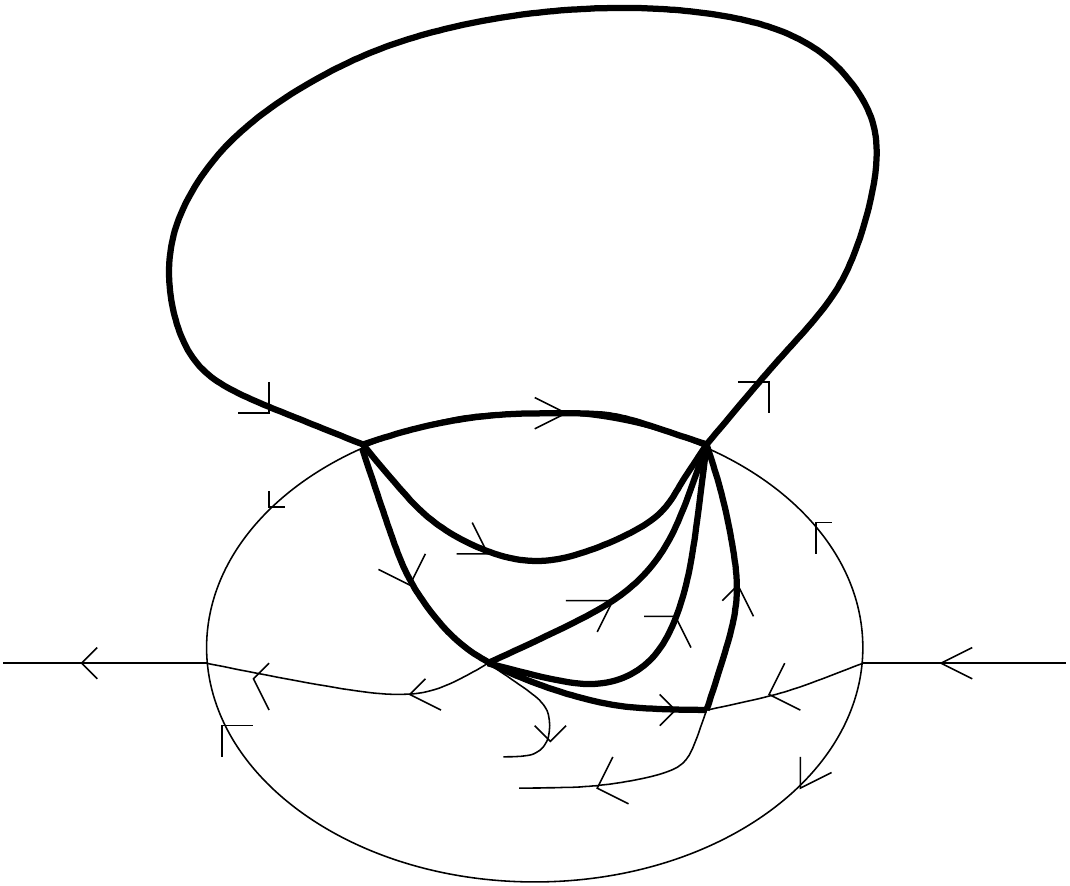}
\\   \hline  Sxhh9  &              &  Sxhh10
\\  \hline
\end{tabular}\end{center}
\caption{Convex limit periodic sets of hh-type for a graphic with
a nilpotent saddle.}
\label{tab.shhconvex}\end{table}

\subsection{Proving the finite cyclicity of a limit periodic set }
Typically, the kind of argument we will use for proving the finite cyclicity of a limit periodic set is the following: we look for the zeroes of a displacement map between two sections. The sections are 2-dimensional but, because of the invariant foliation, the problem can be reduced to a 1-dimensional problem and the conclusion follows by, either an iteration of Rolle's theorem, or its generalization, namely a derivation-division argument. The technique can be adapted to non generic graphics occurring inside integrable systems: the proof in the generic case is transformed into a proof for the corresponding graphic, using some adequate division of the coefficients of the displacement map in the ideal of conditions for integrability. 

\bigskip

To compute the displacement map, we decompose the related transition maps between sections into compositions of Dulac maps in the neighborhood of the singular points and regular $C^k$ transitions elsewhere. 

\subsection{Dulac maps} The Dulac maps are the transition maps in the neighborhood of a singular point on $r=\rho=0$. They are computed when the system is in $C^k$ normal form. The normalizing theorem is Theorem~\ref{thnormalformhyp} of Appendix I. There, it is proved that the normal form is obtained by a normalizing operator ${\cal N}$, a crucial property for this paper. The theorem establishes the existence of  a parameter-depending local change of coordinates of class ${\cal C}^k$ bringing the blow-up of  \eqref{normal_form_family} in the neighborhood of one of the points $P_i$ into the normal form $\ov{ X}_A^N$  (up to $t\mapsto -t$) written in normal form coordinates $(\ov{Y},r,\rho)$ (provided that the eigenvalue  in $r$ has a sign opposite to the two other eigenvalues). Using Table~\ref{eigenvalue}, we take  $\sigma=2(1-2a)$ near $\sigma_0=2(1-2a_0)$ for $P_3$ and $P_4$ when $a_0<\frac12$, and $\sigma=\frac{2a-1}{a} $ near $\sigma_0=\frac{2a_0-1}{a_0} $ for $P_1$ and $P_2$ when $a>\frac12$. The normal form $\ov{ X}_A^N$ is given by
 \begin{enumerate}
\item If $\sigma_0\not\in \Q:$
 \begin{equation}\label{eq2p}
\ov{X}^N_{A}:
\begin{cases}{\dot r}=r,   \\
{\dot \rho} =-\rho,\\
 \dot{\ov{Y} }=-(\sigma+\varphi_{A}(\nu))\ov{Y}.\end{cases}
\end{equation} 

\item  If $\sigma_0=\frac{p}{q}\in \Q,$ with $(p,q)=1$ when $q\not  =1:$
 
  \begin{equation}\label{eq3p}
\ov{X}^N_{A}:
\begin{cases}{\dot r}=r,   \\
{\dot \rho} =-\rho,\\
{\dot{\ov{Y}}} =-\Big(\sigma+\varphi_{A}(\nu)\Big)\ov{Y}+\Phi_{A}(\nu,r^p\ov{Y}^q)\ov{Y}+\rho^p\eta_{A}(\nu) , \end{cases}
\end{equation} 
with  $\eta_{A}\equiv 0$ when $\sigma_0\not\in \N$ ( $q\not  =1$).
\end{enumerate}
The functions  $\varphi_{A},\Phi_{A},\eta_{A}$ are polynomials  of degree $ \leq K(k)$ increasing with $k,$ with smooth coefficients in $A$  and $\Phi_{A}(\nu,0)\equiv 0.$
 
  We introduce the ``compensator''  function $\omega(\xi,\alpha)$, also denoted $\omega_\alpha(\xi)$, defined by
      \begin{equation}
\omega(\xi,\alpha)=\omega_\alpha(\xi)=\begin{cases}\frac{\xi^{-\alpha}-1}{\alpha}, &  \alpha\not =0,  \\
-\ln \xi, &\alpha=0.\end{cases}       \label{compensator}\end{equation}

We propose in Appendix I a new computation of the Dulac maps previously studied in \cite{ZR}. There are two types of Dulac transitions. The first type of transition map goes from a section $\{r= r_0\}$ to a section $\{\rho = \rho_0\}$, or the other way around. This type of transition typically behaves as an affine map, which is a very strong contraction or dilatation. The study of the number of  zeroes of a displacement involving only Dulac maps of the first type is reduced to the study of the number of zeroes of a 1-dimensional map. 

The second type of Dulac map is concerned with a transition map from a section $\{\ov{Y}=Y_0\}$ to, either a section $\{r=r_0\}$, or a section $\{\rho=\rho_0\}$. We take $\nu_0=r_0\rho_0.$

\subsubsection{First type of Dulac map} 

\begin{theorem}\label{thtranstypeI}
 We consider the Dulac map from the section $\{\rho= \rho_0\}$ to the section $\{r=r_0\}$, both parametrized by $(\ov{Y},\nu).$ 
 Let $$\bar\sigma=\bar \sigma(\sigma,\nu)=\sigma+\varphi_{A}(\nu)$$ and $$\alpha=\alpha(\sigma,\nu)=\bar\sigma(\sigma,\nu)-\sigma_0.$$ The $\ov{Y}$-component of the  transition map $D_{A}$ has the following expression:
\begin{enumerate}
\item If $\sigma_0\not\in \Q: $
\begin{equation}\label{eq20d}
D_{A}(\ov{Y},\nu)=\Big(\frac{\nu}{\nu_0}\Big)^{\bar\sigma} \ov{Y}.
\end{equation}
\item    If $\sigma_0=\frac{p}{q}\in \Q$ with $(p,q)=1$  when $\sigma_0\not \in \N:$

\begin{equation}\label{eq21d}
D_{A}(\ov{Y},\nu)=\eta_{A}(\nu)\rho_0^p\Big(\frac{\nu}{\nu_0}\Big)^{\bar\sigma}\omega\Big(\frac{\nu}{\nu_0},\alpha\Big)+\Big(\frac{\nu}{\nu_0}\Big)^{\bar\sigma}\Big(\ov{Y}+\phi_{A}(\ov{Y},\nu)\Big),
\end{equation}
with $\eta_{A}$  as  in (\ref{eq3p}). In particular, $\eta_{A}\equiv 0$ when $\sigma_0\not\in \N.$

The function family $\phi_{A}$ in   (\ref{eq21d}) is of order $O(\nu^{p+q\alpha}\omega^{q+1}\Big(\frac{\nu}{\nu_0},\alpha\Big)|\ln \nu|)$ and for any integer $l\geq 2,$   is of class ${\cal C}^{l-2}$   in $(\ov{Y},\nu^{1/l},\nu^{1/l}\omega\Big(\frac{\nu}{\nu_0},\alpha\Big), \nu,\mu,\sigma)$. 
\end{enumerate}

\end{theorem}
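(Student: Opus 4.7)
My plan is to exploit the structural fact that $\nu=r\rho$ is a first integral of both normal forms \eqref{eq2p} and \eqref{eq3p}, so the flow preserves every leaf $L_\nu=\{r\rho=\nu\}$. On $L_\nu$ the $(r,\rho)$-dynamics collapse to $r(t)=r(0)e^{t}$ and $\rho(t)=\rho(0)e^{-t}$; starting on $\{\rho=\rho_0\}$, so that $r(0)=\nu/\rho_0$, the trajectory reaches $\{r=r_0\}$ at the explicit time $T(\nu)=\ln(\nu_0/\nu)$, with $\nu_0=r_0\rho_0$. The Dulac map is therefore obtained by integrating the scalar $\ov Y$-equation on $[0,T(\nu)]$, treating $\nu$ and the outer parameters as constants.

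\emph{Case 1 (non-resonant).} Since $\bar\sigma$ depends only on $\nu$, the $\ov Y$-equation is linear with constant coefficient on each leaf. Direct integration gives $\ov Y(T)=(\nu/\nu_0)^{\bar\sigma}\ov Y$, which is \eqref{eq20d}.

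\emph{Case 2 (resonant).} I would remove the linear part by the substitution $U(t)=e^{\bar\sigma t}\ov Y(t)$. Using $p-q\bar\sigma=-q\alpha$ (and $\bar\sigma-p=\alpha$ in the integer case $q=1$), this produces
\[\dot U \;=\; \Phi_{A}\bigl(\nu,\,r(0)^p\,e^{-q\alpha t}\,U^q\bigr)\,U \;+\; \rho_0^p\,\eta_{A}(\nu)\,e^{\alpha t},\]
where the forcing term appears only when $q=1$. The definition of the compensator gives $\int_0^{T(\nu)}e^{\alpha t}\,dt=\omega(\nu/\nu_0,\alpha)$ (including $\alpha=0$ in the limit), so the forcing contributes $\rho_0^p\eta_{A}(\nu)\omega(\nu/\nu_0,\alpha)$ to $U(T)$. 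The remaining correction $\phi_A$ is then produced by Picard iterating the integral equation for $U$: since $\Phi_{A}(\nu,0)\equiv 0$ by \eqref{eq3p}, each iterate extracts an additional factor of the resonant monomial $r^p\ov Y^q\sim\nu^{p+q\alpha}$ and one further $\omega$-factor from the time integral, so that after finitely many iterations the remainder satisfies $O(\nu^{p+q\alpha}\omega^{q+1}|\ln\nu|)$. Multiplying back by $e^{-\bar\sigma T}=(\nu/\nu_0)^{\bar\sigma}$ recovers \eqref{eq21d}.

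The hard part will be the ${\cal C}^{l-2}$-regularity claim in the auxiliary variables $(\ov Y, \nu^{1/l}, \nu^{1/l}\omega(\nu/\nu_0,\alpha), \nu, \mu, \sigma)$. I would treat $\omega$ as an independent coordinate, smooth in $\alpha$ via its explicit formula and with the removable singularity at $\alpha=0$ handled by the limit $-\ln\xi$, while the fractional power $\nu^{1/l}$ supplies the Holder cushion needed to absorb the non-smoothness of $\omega$ and of $\nu^{p+q\alpha}$ at $\nu=0$. In these variables the Picard integrand is jointly ${\cal C}^{l-2}$ on the closed transition domain, and a finite number of iterations, enough to push the remainder below the required Taylor order, preserves this regularity and exhibits $\phi_A$ as a ${\cal C}^{l-2}$ function of the listed variables.
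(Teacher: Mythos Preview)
Your setup and the derivation of the main formulae are correct and coincide with the paper's: same transit time $T=-\ln(\nu/\nu_0)$, same substitution $U=e^{\bar\sigma t}\ov Y$ to kill the linear part, same identification of the forcing integral with the compensator $\omega(\nu/\nu_0,\alpha)$. The paper in fact proves one general transition result (Theorem~\ref{thtransgeneralhypsaddle}) from an arbitrary point of a $3$-neighborhood to $\{r=r_0\}$ and then restricts to the section $\{\rho=\rho_0\}$, but the scalar computation is identical to yours.

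Where your proposal is weaker is the treatment of $\phi_A$.

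\textbf{Order estimate.} The bound $O(\nu^{p+q\alpha}\omega^{q+1}|\ln\nu|)$ is the order of $\phi_A$ itself, obtained in a single step, not by piling up factors through successive Picard iterates. After the further substitution $\bar Z=U-\rho_0^p\eta_A(\nu)\,\Theta(t,\alpha)$ with $\Theta(t,\alpha)=\int_0^te^{\alpha s}\,ds$, the right-hand side of the $\bar Z$-equation is $\Phi_A\bigl(\nu,r(0)^pe^{-q\alpha t}(\rho_0^p\eta\,\Theta+\bar Z)^q\bigr)\cdot(\rho_0^p\eta\,\Theta+\bar Z)$; using $\Phi_A(\nu,0)=0$ this is $O\bigl(r(0)^pe^{-q\alpha t}\Theta^{q+1}\bigr)$, and integrating over $[0,T]$ with $T=|\ln(\nu/\nu_0)|$ and $\Theta\le\omega$ gives the claim. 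The exponent $q+1$ on $\omega$ comes from the $(q{+}1)$-st power in the nonlinearity, not from iteration.

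\textbf{Regularity.} This is where there is a genuine gap. Picard iteration on the unbounded interval $[0,T(\nu)]$ does not by itself produce ${\cal C}^{l-2}$ dependence: finitely many explicit iterates are ${\cal C}^{l-2}$ in your variables, but the remaining tail being $C^0$-small in $\nu$ does not make it ${\cal C}^{l-2}$. Your two hints (``treat $\omega$ as an independent coordinate'', ``$\nu^{1/l}$ as a H\"older cushion'') are exactly the right ingredients, but the paper shows what has to be done with them. First change time from $t$ to $\Theta$ (so that the exit value of $\Theta$ is $\omega$), then set $U=r(0)^{1/l}\sim\nu^{1/l}$ and rescale to $\tau=U\Theta$. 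The transition interval becomes $\tau\in[0,\,U\omega]$, which is \emph{bounded} (indeed $U\omega\to0$). On this closed domain one proves by direct estimation (Lemma~\ref{lembarD}) that every partial derivative of the rescaled right-hand side of order $|m|\le l-2$ is $O(U^{pl-1-|m|-\delta})$, hence extends continuously by $0$ to $\{U=\tau=0\}$. A Whitney extension to a full neighborhood then yields a genuinely ${\cal C}^{l-2}$ ODE on a compact time interval, and the Cauchy theorem gives ${\cal C}^{l-2}$ dependence of its flow---hence of $\phi_A$---on all listed variables. Without this compactification-plus-extension step the regularity claim is not established.
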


\subsubsection{Second type of Dulac map}

 \begin{theorem}\label{thtranstypeII}
We consider the Dulac map from the section $\{\ov{Y}=Y_0\},$  parametrized by $(r,\rho)$ to a section $\{r=r_0\}$ parameterized by $(\ov{Y},\nu)$. It has the form   $(r,\rho)\mapsto(D_A(r,\rho), \nu)$, with its $\ov{Y}$-component, $(D_A(r,\rho)$, given by:
\begin{enumerate}
\item If $\sigma_0\not\in \Q: $
\begin{equation}\label{eq18b}
D_{A}(r,\rho)=\Big(\frac{r}{r_0}\Big)^{\bar\sigma} Y_0.
\end{equation}
\item    If $\sigma_0=\frac{p}{q}\in \Q$ with $(p,q)=1$  when $\sigma_0\not \in \N:$

\begin{equation}\label{eq19b}
D_{A}(r,\rho)=\eta_{A}(\nu)\rho^p\Big(\frac{r}{r_0}\Big)^{\bar\sigma}\omega\Big(\frac{r}{r_0},\alpha\Big)+\Big(\frac{r}{r_0}\Big)^{\bar\sigma}\Big(Y_0+\phi_{A}(r,\rho)\Big),
\end{equation}
with $\eta_{A}$  as  in (\ref{eq3p}) ($\eta_{A}\equiv 0$ when $\sigma_0\not\in \N).$

The function family $\phi_{A}$ in   (\ref{eq19b}) is of order $O(r^{p+q\alpha}\omega^{q+1}\Big(\frac{r}{r_0},\alpha\Big)|\ln r|)$ and, for any integer $l\geq 2,$   is of class ${\cal C}^{l-2}$   in $(r^{1/l},r^{1/l}\omega\Big(\frac{r}{r_0},\alpha\Big),\rho,\mu,\sigma)$. 
\end{enumerate}

\end{theorem}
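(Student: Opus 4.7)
The plan is to integrate the flow of the normal form $\ov{X}_A^N$ directly, exploiting that $\nu=r\rho$ is a first integral so that the flight time from $\{\ov{Y}=Y_0\}$ to $\{r=r_0\}$ is known explicitly as $T=\ln(r_0/r)$, independent of $\ov{Y}$. This makes the analysis structurally parallel to that of Theorem~\ref{thtranstypeI}, with $r/r_0$ now playing the role that $\nu/\nu_0$ played there, and with $\rho=\nu/r_0$ available on the exit section via the first integral.

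In the irrational case $\sigma_0\notin\Q$, the system \eqref{eq2p} decouples completely: the $\ov{Y}$-equation is the scalar linear ODE $\dot{\ov{Y}}=-\bar\sigma\,\ov{Y}$ with $\nu$ held constant along trajectories. A one-line integration over $[0,T]$ gives $\ov{Y}(T)=Y_0(r/r_0)^{\bar\sigma}$, which is \eqref{eq18b}.

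In the rational case $\sigma_0=p/q$ the key device is the monomial substitution $u=r^p\ov{Y}^q$. Differentiating along trajectories and using $\dot r=r$ together with the third line of \eqref{eq3p} and the identity $p-q\bar\sigma=-q\alpha$ yields
\[
\dot u \;=\; -q\alpha\,u \;+\; q\,u\,\Phi_A(\nu,u) \;+\; q\,\nu^p\,\ov{Y}^{q-1}\eta_A(\nu).
\]
When $\sigma_0\notin\N$ we have $\eta_A\equiv 0$, so this is an autonomous scalar ODE in $u$ with parameter $\nu$; the plan is to solve it by variation of parameters around the leading factor $e^{-q\alpha t}$ and sum the resulting Picard expansion. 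When $\sigma_0\in\N$ one has $q=1$, so $\ov{Y}^{q-1}=1$, and the Duhamel integral $\int_0^T e^{-\alpha(T-s)}\,ds$ produces exactly the compensator prefactor $\eta_A(\nu)\rho^p(r/r_0)^{\bar\sigma}\omega(r/r_0,\alpha)$ once $\rho$ is rewritten as $\nu/r_0$ on the exit section. Taking the $q$-th root in $u(T)$ and solving for $\ov{Y}(T)$ recovers the form \eqref{eq19b}.

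Two secondary points remain. The size estimate $\phi_A=O(r^{p+q\alpha}\omega^{q+1}|\ln r|)$ will be obtained by counting factors of $r$ in each Picard iterate, using $\Phi_A(\nu,0)\equiv 0$ so that $\Phi_A(\nu,u)=O(u)$, and noting that every integration against $e^{-q\alpha(T-s)}$ produces either a power $(r/r_0)^{q\alpha}$ or a factor $\omega(r/r_0,\alpha)$ according to whether the resulting exponent vanishes. The main obstacle will be the regularity claim: neither $(r/r_0)^{\bar\sigma}$ nor $\omega(r/r_0,\alpha)$ is smooth in $(r,\sigma)$ at $r=0$, but each becomes of class $\mathcal{C}^{l-2}$ once expressed in the composite variables $(r^{1/l},\,r^{1/l}\omega(r/r_0,\alpha),\,\rho,\,\mu,\,\sigma)$. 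I would check by induction on the Picard iterate that this regularity is preserved at every step, using smoothness of $\Phi_A$ and the fact that its argument $u=r^p\ov{Y}^q$ is itself a monomial in the composite variables. Modulo this bookkeeping, the final assembly of \eqref{eq19b} reduces to identifying the Duhamel particular solution (which contributes the compensator term) and the homogeneous part $(r/r_0)^{\bar\sigma}(Y_0+\phi_A(r,\rho))$, exactly as in Theorem~\ref{thtranstypeI}.
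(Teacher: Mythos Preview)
Your overall architecture is sound and would recover formulas \eqref{eq18b}--\eqref{eq19b}: the flight time is indeed $T=-\ln(r/r_0)$, the irrational case is a one-line linear integration, and in the rational case the Duhamel particular solution produces the compensator prefactor $\eta_A(\nu)\rho^p(r/r_0)^{\bar\sigma}\omega(r/r_0,\alpha)$ (note: the $\rho$ in the formula is the \emph{initial} $\rho$, not $\nu/r_0$; the identity $\nu^p=r^p\rho^p$ is what makes the bookkeeping work).  However, two aspects of your plan diverge from the paper and one of them is a genuine gap.

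First, the paper does not prove Theorem~\ref{thtranstypeII} directly: it proves a single general transition formula (Theorem~\ref{thtransgeneralhypsaddle}) from an arbitrary point $(u,v,Y)$ in a neighborhood $W$ to the section $\{u=u_0\}$, and then both Theorems~\ref{thtranstypeI} and~\ref{thtranstypeII} follow by restricting the starting point to the appropriate section.  In that general proof the substitution is $Y(t)=e^{-\bar\sigma t}Z(t)$, not your $u=r^p\ov Y^q$; the paper's choice keeps $\ov Y$ as the dependent variable throughout and avoids extracting a $q$-th root at the end.

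Second---and this is the substantive point---the regularity claim is the heart of the theorem, and your proposal to establish it by ``induction on the Picard iterate'' underestimates the difficulty.  The paper's device is quite different: after reducing to the equation $\dot{\bar Z}=u^p\bar H(\Theta,\bar Z,\ldots)$ with $\Theta(t,\alpha)\to+\infty$ as $t\to T$, it substitutes $u=U^l$ and changes time to $\tau=U\Theta$, obtaining an equation $d\bar Z/d\tau=\bar G$ whose right-hand side (via Lemma~\ref{lembarD}) extends to a $\mathcal C^{l-2}$ function on a full neighborhood including $\{\tau=U=0\}$.  The $\mathcal C^{l-2}$ regularity of $\phi_A$ then follows \emph{for free} from Cauchy's theorem on the regularity of flows, with no iterate-by-iterate estimates.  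The paper explicitly contrasts this with the earlier approach in \cite{ZR}, which did use direct variational estimates of the transition and its derivatives and was, in the paper's words, built on ``heavy computations''.  Your Picard-iterate plan is closer in spirit to that older route: it can be made to work, but it is not ``bookkeeping''---you would need uniform $\mathcal C^{l-2}$ bounds on each iterate in the composite variables $(r^{1/l},r^{1/l}\omega,\rho,\mu,\sigma)$ and a passage to the limit, which is precisely the labor the paper's time-change trick is designed to bypass.
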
 
\vskip5pt

\section{Applications to quadratic systems}\label{sect:quadratic}

\subsection{ Quadratic systems with a nilpotent singular point at infinity}

\begin{theorem}\label{thm.infty}
A quadratic system with a triple singularity point of
saddle or elliptic type at infinity and a finite singular point
of center type can be brought to the form
\begin{equation}
\left\{\begin{array}{ll}
\dot x&=-y+B_0x^2,\\
\dot y&=x+xy,
\end{array}\right.
\label{inf}
\end{equation}
with $B_0>0$. For $B_0\neq1$, the full $5$-parameter unfolding inside quadratic systems is given with $B= B_0+\mu_0$ inside the family
\begin{equation}
\left\{\begin{array}{ll}
\dot x&=-y+Bx^2 +\mu_2y^2 + \left(\mu_4+ B\mu_5\right)x\\
\dot y&=x+xy+\mu_3y^2+(1-2B)\mu_5y.
\end{array}\right.
\label{infunfold}
\end{equation}
For $B_0=1$, the full $5$-parameter unfolding inside quadratic systems is rather given with $B= 1+\mu_0$ inside the family
\begin{equation}
\left\{\begin{array}{ll}
\dot x&=-y+(1+\mu_0)x^2 +\mu_2y^2 + \mu_5x\\
\dot y&=x+(\mu_4+\mu_5)x^2+ xy+\mu_3y^2.
\end{array}\right.
\label{infunfold_B1}
\end{equation}
The parameter $\mu_2$ (resp. $\mu_3$) corresponds to a nonzero multiple of the parameter $\mu_2$ (resp. $\mu_3$) in the blow-up of the family at the singular point. There is no parameter $\mu_1$ in this family since the connection along the equator is fixed. 

Moreover for \eqref{inf} we have:
\begin{enumerate}
\item $B_0>1$ for a nilpotent saddle;

   $B_0=\frac{3}{2}$ corresponds to $a=-\frac{1}{2}$ in \eqref{normal_form_ZR}
   ($b=0$ in \eqref{normal_form_DRS}).
\item $B_0<1$ for an elliptic point;
    the elliptic point is of larger codimension, type 1 (the singular points in the blow-up coallesce by pairs)
         if $B_0=\frac{1}{2}$ (corresponding to $a=\frac{1}{2}$ in \eqref{normal_form_ZR}, i.e., $b=2\sqrt{2}$
         in \eqref{normal_form_DRS}).
\item  The system \eqref{infunfold} has an invariant line $y=-1$ if $\mu_3-(1-2B)\mu_5=0$.
\item If $\mu_2=\mu_3=\mu_4=0$, the system \eqref{infunfold} has an invariant parabola 
\begin{equation}
y=\frac{2B-1}{2}x^2+(2B-1)\mu_5 x -\frac1{2B} + (2B-1)\mu_5^2.
\label{Parabola.invariant}
\end{equation}
The parabola $y=\frac12x^2-\frac12$ is invariant for system \eqref{infunfold_B1} when $\mu_0=\mu_2=\mu_3=\mu_4=0$.
\item The integrability condition is $\mu_3=\mu_4=\mu_5=0$, for which we have the following graphics with return map
\begin{itemize} 
\item $B>1$: $(I_{14}^1)$,
\item $\frac12<B<1$: $(I_{6b}^1)$, 
\item $0<B<\frac12$: $(H_{13}^3)$,
\item $B=0$: $(H_{14}^3)$,
\item $B=1$: $(DI_{2b})$. 
\end{itemize} 
\item The value of ``$a$"  in the corresponding normal form \eqref{normal_form_family} is $a=1-B$, and the parameters $\mu_2$ and $\mu_3$ correspond to $\mu_2$ and $\mu_3$ up to a nonzero constant.

\end{enumerate}
\end{theorem}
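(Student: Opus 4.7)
The plan is to treat the five assertions essentially by direct computation, with the main conceptual step being to identify the right stratum of quadratic centers and to put its symmetry axis in a canonical position with respect to the triple point at infinity.

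First I would derive the normal form \eqref{inf}. Starting from a general quadratic system, I translate the center to the origin and use a linear change of variables plus a rescaling of time to bring the linear part to $\begin{pmatrix}0 & -1\\ 1 & 0\end{pmatrix}$, so the system has the form $\dot x=-y+P_2(x,y)$, $\dot y=x+Q_2(x,y)$ with $P_2,Q_2$ homogeneous of degree $2$. The singular points at infinity correspond to real roots of the cubic $xQ_2(x,y)-yP_2(x,y)$, and a triple nilpotent point at infinity requires that this cubic have a triple real root. A rotation places this triple direction on the $y$-axis at infinity, which forces the $x^2$-coefficient in $Q_2$ and all coefficients save one in $P_2$ to vanish in the appropriate way. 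Next I invoke the Kapteyn--Bautin classification: a graphic through a nilpotent singular point and surrounding a center lies in the reversible stratum, so the system admits a symmetry $(x,y,t)\mapsto(-x,y,-t)$ (after a rotation compatible with the above). Imposing this symmetry and rescaling $x$ kills the remaining free coefficients except for $B_0$ (appearing as the $x^2$-coefficient in $\dot x$), yielding \eqref{inf}. A trivial sign reversal gives $B_0>0$.

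Second, for the unfoldings \eqref{infunfold} and \eqref{infunfold_B1} I would write down the most general quadratic perturbation $\dot x=-y+Bx^2+\alpha_1 x+\alpha_2 y+\alpha_3 xy+\alpha_4 y^2$, $\dot y=x+xy+\beta_1 x+\beta_2 y+\beta_3 x^2+\beta_4 y^2$, and systematically eliminate the redundant parameters. The requirement that the triple nilpotent singularity persist at infinity along the $y$-direction controls some of the $\beta_i$'s (in particular fixes the connection along the equator, which removes $\mu_1$). An affine change of the form $(x,y)\mapsto(x+c_1,y+c_2)$ plus a rescaling $t\mapsto\lambda t$ absorbs further parameters. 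The remaining five independent directions are exactly $\mu_0,\mu_2,\mu_3,\mu_4,\mu_5$ arranged as in \eqref{infunfold}. The case $B_0=1$ is singular because the combination controlling the $x$-coefficient in $\dot y$ degenerates, so an alternative representative of the cokernel must be chosen, giving \eqref{infunfold_B1}.

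Third, the invariant curves in items (3) and (4) are verified by direct substitution. For $y=-1$ in \eqref{infunfold}, computing $\dot y|_{y=-1}$ and setting it to $0$ gives exactly $\mu_3-(1-2B)\mu_5=0$. For the parabola $y=\frac{2B-1}{2}x^2+(2B-1)\mu_5 x-\frac1{2B}+(2B-1)\mu_5^2$, I differentiate along the flow and verify that $\dot y-\bigl((2B-1)x+(2B-1)\mu_5\bigr)\dot x$ vanishes identically when $\mu_2=\mu_3=\mu_4=0$; the coefficient of $\frac{1}{2B}$ is determined by matching constant terms. The analogous computation handles \eqref{infunfold_B1}. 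For item (5), the integrability condition $\mu_3=\mu_4=\mu_5=0$ is established by checking that the resulting system is invariant under $(x,y,t)\mapsto(-x,y,-t)$ (reversibility, hence a center and Darboux integrability from the invariant line and parabola); the case separation by the value of $B$ comes from analyzing the phase portrait at infinity and locating the graphics from Figure~\ref{graphics}.

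Finally, for item (6) I would perform the quasi-homogeneous blow-up of the singular point at infinity of \eqref{inf}. Using coordinates $x=1/u$, $y=v/u^2$ (or the dual chart) one passes to a neighborhood of the singular point at infinity; after one further weighted blow-up one reaches the form \eqref{normal_form_ZR}, and matching the coefficient of $x^2$ in $\dot x$ yields $a=1-B$. The matching of $\mu_2,\mu_3$ follows by tracking these perturbations through the same sequence of charts and blow-ups. The main obstacle is bookkeeping in this last step: keeping track of how each of the five perturbation parameters in \eqref{infunfold} transforms under the chart $x=1/u$, $y=v/u^2$ followed by the blow-up \eqref{blow-up_family}, so as to identify which of them scales as $\nu^3,\nu^2,\nu$ in \eqref{change_parameters}. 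This is routine but error-prone, and is the place where the assertion that $\mu_2$ and $\mu_3$ agree (up to nonzero constants) with the blow-up parameters must be checked by explicit expansion.
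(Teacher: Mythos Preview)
Your derivation of the normal form \eqref{inf} and your treatment of items (3)--(6) by direct substitution are essentially what the paper does (the paper in fact proves only the normal form, the classification via \eqref{inf.4}, and the unfolding, leaving (3)--(6) as routine checks).

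The gap is in your construction of the unfolding. Two points:

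\emph{First}, it is conceptually wrong to ``require that the triple nilpotent singularity persist at infinity along the $y$-direction'' in an unfolding: the whole purpose of the unfolding is to break this degeneracy, and indeed the parameters $\mu_2,\mu_3$ in \eqref{infunfold} do destroy the triple point. The absence of a parameter $\mu_1$ is not because the nilpotent point persists but simply because the line at infinity is invariant for every polynomial system, so the heteroclinic connection along the equator is automatically fixed. What one \emph{does} preserve in the paper's argument is the simple finite singular point at the origin.

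\emph{Second}, your normalization budget does not close. You invoke only affine translations $(x,y)\mapsto(x+c_1,y+c_2)$ and a time rescaling $t\mapsto\lambda t$, which is three dimensions of group action; together with the spurious constraints from ``persistence'' this cannot account for the reduction from a general quadratic perturbation to five essential parameters. The paper's route is: fix the origin as a singular point and rescale $x,y,t$ to reach the seven-parameter form \eqref{inf_unfold}; then apply a near-identity \emph{linear} change $(X,Y)=(x+\zeta_1 y,\zeta_2 x+y)$ and solve, via the implicit function theorem, for $(\zeta_1,\zeta_2)$ so as to kill the $XY$-term in $\dot X$ and the $X^2$-term in $\dot Y$. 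This system is solvable precisely when $B_0\neq 1$; at $B_0=1$ the second equation degenerates and one instead kills the $Y$-term in $\dot Y$, which is exactly what produces the alternative family \eqref{infunfold_B1}. Your explanation that ``the combination controlling the $x$-coefficient in $\dot y$ degenerates'' is on the right track in spirit, but without the linear change $(\zeta_1,\zeta_2)$ and the associated IFT argument you have no mechanism that singles out $B_0=1$.
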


\begin{proof}
We can suppose that the nilpotent singular point at infinity is
located on the y-axis,  the other singular point at infinity on the x-axis, and the focus or center at the origin.
Then the system can be brought to the form
\begin{equation}
\left\{\begin{array}{ll}
\dot x&=\delta_{10} x +\delta_{01} y   +\delta_{20} x^2   +\delta_{11}xy,\\
\dot y&=\gamma_{10} x  +\gamma_{01}  y   +\gamma_{11}  xy    +\gamma_{02} y^2.
\end{array}\right.
\label{inf.1}
\end{equation}

Localizing the system \eqref{inf.1} at the singular point at infinity
on y-axis by
$v=\frac{x}{y}, \ \ w=\frac{1}{y}$, we have
\begin{equation}
\left\{\begin{array}{ll}
\dot v&=(\delta_{11}-\gamma_{02})v-\delta_{01} w+(\delta_{20}-\gamma_{11})v^2
        +(\delta_{10}-\gamma_{01})vw-\gamma_{10}v^2w,\\
\dot w&=w(-\gamma_{02}-\gamma_{01} w -\gamma_{11}v-\gamma_{10} vw).
\end{array}\right.
\label{inf.2}
\end{equation}
For the singular point $(0,0)$ of system \eqref{inf.2} to be nilpotent,
we should have $\delta_{11}=\gamma_{02}=0$. The point is triple if
$\gamma_{11}\neq 0$. 

We want the finite singular point to be a center, which corresponds in this case to the system being reversible with respect to a line. Because of our choice of singular points at infinity this line can only be the $y$-axis. Then
$\delta_{10}=\gamma_{01}=0$.

By a rescaling and still using the original
coordinates $(x,y)$, we obtain the system \eqref{inf}.

The change of coordinates $W=-w+(B_0-1)v^2$ brings the system \eqref{inf.2} into the
equivalent form
\begin{equation}
\left\{\begin{array}{ll}
\dot V&=W\\
\dot W&=(B_0-1)V^3+(2B_0-3)VW+ o(V^3) + o(VW).
\end{array}\right.
\label{inf.4}
\end{equation}
The classification of the nilpotent singularity
at infinity follows.
\

A general unfolding preserving the singular point at the origin (which is simple) is of the form (after scaling of $x$, $y$, and $t$)
\begin{equation}
\left\{\begin{array}{ll}
\dot x&=-y+Bx^2+ m_{10}x +m_{11}xy+m_{02}y^2\\
\dot y&=x+xy+ n_{01}y+n_{20}x^2+n_{02}y^2,
\end{array}\right.
\label{inf_unfold}
\end{equation}
with $B$ close to $B_0$. We use a change of variable $(X,Y)= (x+ \zeta_1y, \zeta_2x+ y)$ for small $\zeta_1,\zeta_2$.
The terms in $XY$ in the expression of $\dot X$ and the term in $X^2$ in the expression of $\dot Y$ vanish precisely when 
$$\begin{cases}
(2B-1)\zeta_1-m_{11}(1+\zeta_1\zeta_2)+2\zeta_2m_{02}+ 2\zeta_1n_{02}(\zeta_1+\zeta_2)-\zeta_1^2\zeta_2=0,\\
(B-1)\zeta_2+(1+\zeta_2^2)n_{02}- \zeta_2^2n_{11}+\zeta_2^3m_{02}=0,\end{cases}
$$
which can be solved for $(\zeta_1,\zeta_2)$ by the implicit function theorem except for $B_0=1$. 
When $B_0=1$, we replace the second equation by the vanishing of the term in $Y$ in in the expression of $\dot Y$, namely
$$\zeta_1+\zeta_2- n_{01}+ m_{10}\zeta_1\zeta_2=0.$$
Again, we get a system that can be solved for $(\zeta_1,\zeta_2)$ by the implicit function theorem. \end{proof}

\subsection{Finite cyclicity of the boundary limit periodic sets of $(I_{14}^1)$,  $(I_{6b})$ and $(DI_{2b})$}\label{sect:boundary}

 \begin{notation} In the whole paper, $*$ denotes a nonzero constant, which may depend on some parameters.\end{notation}

\begin{theorem}\label{thm:boundary_graphic} The boundary limit periodic sets of $(I_{14}^1)$, $(I_{6b})$ and $(DI_{2b})$ (see Figures~\ref{graphics} (a), (b) and (d) and \ref{boundary_graphic}) have finite cyclicity.\end{theorem}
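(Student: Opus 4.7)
The plan is to transplant the finite cyclicity argument from the generic case into the integrable (reversible) stratum by means of a Bautin-type division in the center ideal, combined with the generalized derivation–division algorithm for two-variable displacement maps of the form \eqref{type_V2}. The boundary limit periodic set of each of $(I_{14}^1)$, $(I_{6b}^1)$ and $(DI_{2b})$ consists, after the family blow-up, of the graphic $\Gamma$ together with a full orbit of the rescaled field on the divisor joining the two hyperbolic points $P_i,P_{i+1}$, so the return map of $\ov X_A$ is the composition of one Dulac map of the first type and one of the second type near those two points, with only regular transitions elsewhere.

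First I would choose the four transversal sections so that the reversibility with respect to the symmetry axis of \eqref{inf} exchanges paired sections; this forces every regular $C^k$ transition entering the displacement map to be the identity along the reversible stratum $\mu_3=\mu_4=\mu_5=0$. Theorem~\ref{thnormalformhyp} is used to produce the normalizing coordinates by an operator, so that the normalization itself respects the symmetry. Combined with the Darboux integrability of \eqref{inf} with its invariant line $y=-1$ and invariant parabola \eqref{Parabola.invariant}, this reduces the displacement $V(r,\rho;\lambda)$ to an explicit expression built from the two Dulac formulas of Theorems~\ref{thtranstypeI} and~\ref{thtranstypeII}, written in the same pair of variables $(r,\rho)$ subject to $r\rho=\nu$.

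Second I would extract a Bautin decomposition. Using the integrability conditions $\mu_3=\mu_4=\mu_5=0$ from Theorem~\ref{thm.infty}(5) and the parametric properties of the quadratic unfolding proved in Appendix~III, I would argue that the leading coefficients of $V$ lie in the ideal $(\mu_3,\mu_4,\mu_5)$, yielding a representation of the form
\[
V(r,\rho;\lambda)=\sum_{i=1}^n a_i(\lambda)\,m_i(r,\rho)\bigl(1+h_i(r,\rho)\bigr),
\]
with $a_i\in(\mu_3,\mu_4,\mu_5)$, $m_i$ generalized monomials in $(r,\rho)$, and $h_i$ of the admitted class. Then I would iterate the generalized derivation operator of Appendix~II, each iteration stripping one monomial and introducing one zero along each leaf $r\rho=\nu$, in the spirit of an extended Rolle–Khovanskii argument and exactly as formulated in Theorems~\ref{thderdiv}, \ref{thpgeq2} and \ref{thp1}.

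The hard part will be the joint management of the two-dimensional nature of $V$, the compensators $\omega(\cdot,\alpha)$ appearing in \eqref{eq19b} and \eqref{eq21d}, and the constraint $r\rho=\nu$: one must verify that the remainders $h_i$ stay in the narrow class that is stable under repeated application of the generalized derivation operator while one variable is being eliminated via $r\rho=\nu$, and then separately treat the irrational, rational non-integer and integer ranges of $\sigma_0$ corresponding respectively to $(I_{14}^1)$, $(I_{6b}^1)$ and $(DI_{2b})$ (with $B_0>1$, $\tfrac12<B_0<1$, and $B_0=1$ in Theorem~\ref{thm.infty}(1)–(5)). Once this stability is established for each case, the bound on the number of zeros of $V$ follows directly from the number of monomials in the Bautin decomposition, via the three theorems of Appendix~II applied separately to each graphic.
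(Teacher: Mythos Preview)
Your overall strategy---symmetric sections, normalization by the operator of Theorem~\ref{thnormalformhyp}, Bautin decomposition in the center ideal, and the derivation--division algorithm of Appendix~II---matches the paper's approach. However, there are two concrete errors that would derail the argument as written.

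\medskip
\textbf{Wrong Dulac types.} You claim the return map involves ``one Dulac map of the first type and one of the second type.'' For the boundary limit periodic set this is incorrect: the trajectory on the divisor approaches \emph{both} $P_3$ and $P_4$ along the $\ov Y$-direction, so both Dulac maps $D_3$ and $D_4$ go from sections $\Sigma_i=\{\ov x_i=X_0\}$ to sections $\Pi_i=\{r=r_0\}$, and hence both are of the \emph{second} type (Theorem~\ref{thtranstypeII}). The displacement is $V=D_4\circ S - T\circ D_3$ with two regular transitions $S:\Sigma_3\to\Sigma_4$ (on the divisor side) and $T:\Pi_3\to\Pi_4$ (on the physical side). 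Using one type~I formula would give the wrong leading monomials (powers of $\nu$ rather than of $r$), and the two-variable derivation--division machinery of Theorem~\ref{thderdiv} would not apply as stated.

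\medskip
\textbf{Mismatched case split.} You associate the trichotomy $\sigma_0\notin\Q$, $\sigma_0=p/q$ with $q>1$, and $\sigma_0\in\N$ respectively to $(I_{14}^1)$, $(I_{6b}^1)$, and $(DI_{2b})$. This is not how the cases arise. Since $\sigma_0=2(1-2a_0)=2(2B_0-1)$ varies over an interval for each of $(I_{14}^1)$ and $(I_{6b}^1)$, all three subcases of $\sigma_0$ must be handled for \emph{each} graphic; the integer case $\sigma_0=p$ (with its extra $\omega$-terms and the separate Theorems~\ref{thpgeq2} and~\ref{thp1}) is not peculiar to $(DI_{2b})$. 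If you only treated the irrational case for $(I_{14}^1)$, say, your proof would be incomplete precisely at the values of $B_0$ where resonant monomials appear and the argument is most delicate.
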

\begin{figure}\begin{center}
\includegraphics[width=5cm]{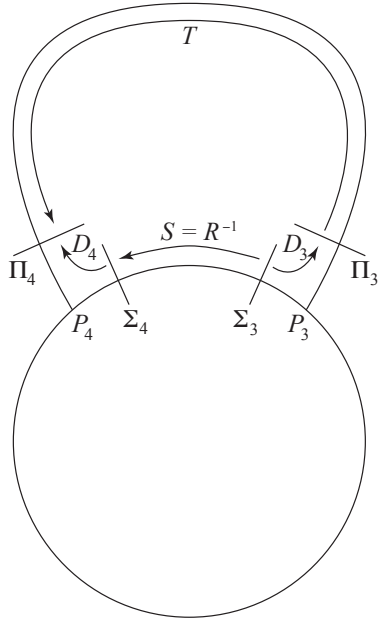}
\caption{The boundary graphic through $P_3$ and $P_4$ and the four sections $\Sigma_i$ and $\Pi_i$, $i=3,4$, in the normalizing coordinates.}\label{boundary_graphic}\end{center}\end{figure}
\begin{proof}
The finite cyclicity of the boundary limit periodic set is studied inside the family \eqref{infunfold} when $B_0\neq 1$, and we will discuss later the adjustment when $B_0=1$.

\medskip
\noindent{\bf Choice of parameters.} We  take as parameters \begin{equation}M=( \ov{\mu}_3, \mu_4,\mu_5, \ov{\mu}_2,B_0-1)= (M_C, \ov{\mu}_2, B_0-1), \label{par_M}\end{equation}
with $(\ov{\mu}_2,\ov{\mu}_3)\in\S_1$ and $(B_0-1,\mu_4,\mu_5)$ in a small ball. 
The parameters 
\begin{equation}M_C= (\ov{\mu}_3, \mu_4,\mu_5)\label{M_C}\end{equation}
unfold the integrable situation. We let $I_{C}$ be the ideal of germs of $C^k$-functions of the parameters generated by $\{\ov{\mu}_3, \mu_4,\mu_5\}$. 

\begin{notation} \begin{enumerate}
\item The symbol $O_P(M_C)$ refers to a function in the parameter $M$ belonging to the ideal $I_{C}$. 
\item The symbol $O_G(M_C)$ refers to a function of  $(X,M)$ which belongs to the ideal generated by $I_{C}$ inside the space of  functions of $(X,M).$ Depending on the limit periodic set, we could have $X=\ov{x}_3$, where $\ov{x}_3$ is the normalizing coordinate near $P_3$, or $X=(r,\rho)$. \end{enumerate}\end{notation}

\medskip

\noindent {\bf The displacement map.}
It is better to consider the chart $\ov{y}=1$ in the blow-up. We take $C^k$ normalizing charts in the neighborhood of $P_3$ and $P_4$. As discussed above, these $C^k$ normalizing charts can be chosen symmetric one to the other under the center conditions. The normalizing coordinates are $(r,\rho,\ov{x}_i)$ near $P_i$.
We consider sections $\Sigma_i= \{\ov{x}_i=X_0\}$ and $\Pi_i= \{r=r_0\}$ in the normalizing charts. The sections $\Sigma_i$ are parameterized by $(r,\rho)$, and the sections $\Pi_i$ by $(\ov{x}_i,\nu)$.

Let $V= D_4\circ S - T\circ D_3$ be the displacement map from $\Sigma_3$ to $\Pi_4$: $T$ and $D_3$ follow the flow forward, while $S$ and $D_4$ follow the flow backwards. 

Let us first give the proof when $\sigma_i(0)\notin\Q$. The Dulac maps are defined from sections $\Sigma_i=\{\ov{x}_i=X_0\}$ to sections $\Pi_i=\{r=r_0\}$, with $X_0$ and $r_0$ fixed.  
Then the Dulac maps $D_i$ have the form
\begin{equation}D_i(r,\rho)= (C_i(M) r^{\ov{\sigma}_i}, r\rho).\label{D_i}\end{equation}
We can choose $X_0$ and $r_0$ so that $C_i(0)=1$, i.e. $X_0r_0^{-\sigma_0}=1$, and $C_3(M)= C_4(M)$ under the center conditions. 

The map $T$ has the form
\begin{equation}T(\ov{x}_3,\nu)= (H(\ov{x}_3,\nu),\nu).\label{map_T}\end{equation}
Because of the symmetry of the sections, then $H\equiv id$ under the center conditions. 

The planes $r=0$ and $\rho=0$ are invariant under the map $S$, which hence has the form
\begin{equation}
S(r,\rho)= (r F(r,\rho), \rho F^{-1}(r,\rho)),\label{map_S} \end{equation}
with $F$ of class $C^k$, since $\nu=r\rho$ is invariant.
Moreover, it is known from \cite{ZR} that $F(0,0)=1$ when the sections $\Sigma_i$ are symmetric. 

The displacement map then has the form 
\begin{equation}
\Delta (r,\rho)= \left(C_4(M) r^{\ov{\sigma}_4}F^{\ov{\sigma}_4}(r,\rho)- H\left(C_3(M)r^{\ov{\sigma}_3}\right),\nu\right).\label{map_V}\end{equation}
Let $V(r,\rho)$ be the first component of $\Delta$. Then periodic solutions correspond to zeroes of $V$.  

We now need to compute $F$ and $H$. 

\medskip
\noindent{\bf Computation of $H$.}

The map $H$ is $C^k$ in $(\ov{x}_3, \nu)$. It has the form 
\begin{equation}
H(\ov{x}_3, \nu)= \ov{x}_3+ \eps_0(M) + \eps_1(M)\ov{x}_3 + O(\ov{x}_3^2)O_G(M_C),\end{equation}
with $\eps_0(M)=O_P(M_C), \eps_1(M)=O_P(M_C)$. 

For $\mu_2=\mu_3=\mu_4=0$, the system \eqref{infunfold} has the invariant parabola \eqref{Parabola.invariant}. The term $\mu_4 x$ in $\dot x$ is without contact,  which yields that 
\begin{equation}\eps_0(M)=*\mu_4(1+O(M))+O(\mu_3) + O(\mu_5)O(M)= *\mu_4(1+O(M))+O(\ov{\mu}_3\nu) + O(\mu_5)O(M),\label{eps_0}\end{equation}
where $*$ denotes a nonzero constant. Lemma~\ref{proof_eps_0} in Appendix II shows that the same is true for \eqref{infunfold_B1}. 
Let us again take $\mu_2=\mu_3=\mu_4=0$. The divergence is then $(2B+1)x+ (1-B) \mu_5$. 
Proposition~\ref{proof_eps_1} in the Appendix II shows that 
\begin{equation}\eps_1(M)= *\mu_5(1+O(M))+ O(\ov{\mu}_3\nu) + O(\mu_4). \label{eps_1}\end{equation}

\medskip
\noindent{\bf The center ideal.}
The equations \eqref{eps_0} and \eqref{eps_1} imply that we can take $\{\eps_0,\eps_1,\ov{\mu}_3\}$ as generators of the center ideal $I_C$.

\medskip
\noindent{\bf Computation of $F$.}
The function $F$ has the form:
\begin{equation}
F(r,\rho) = 1+*\ov{\mu}_3 \rho(1+ O(\rho)) + O(r)O_G(M_C).\label{eq_F}\end{equation}
Indeed, it is proved in Lemma~\ref{coef_rho} in the Appendix that the second derivative of $\rho F(0,\rho)$ is a nonzero multiple of $\ov{\mu}_3$. 
Moreover, the blown-up vector field is integrable on $r=0$ for $\ov{\mu}_3=0$.

\medskip
\noindent{\bf Writing the displacement as a finite sum of terms.}
We need grouping all terms of the displacement map into a finite sum of the form \eqref{type_V2}. We will see that three terms are sufficient and show that 
\begin{equation} V(r,\rho)= -\eps_0(M) (1+ h_0(r, \rho))- C_3(M) \eps_1(M)r^{\ov{\sigma}_3} (1+ h_1(r,\rho)) + *\ov{\mu}_3r^{\ov{\sigma}_3} \rho (1+ h_2(r,\rho)).\label{form_V}\end{equation}

We now explain how to group the different terms.  

\begin{notation}The symbol $O(r^\delta)$ used in the sequel, is for an unspecified  $\delta>0,$ which may vary from one formula to  the other.
\end{notation}

 Let us first consider the terms coming from 
$H\circ D_3$. Remember that $H$ is the identity when we have a center. Moreover, the map $H$ really takes place in the initial $(x,y)$-plane, where the center ideal is generated by $\{\eps_0,\eps_1,\mu_3\}$. Hence, the higher order terms of $H\circ D_3$ are of the form $$r^{2\ov{\sigma}_3} \left(\eps_0(M) k_0(r,\rho) + \eps_1(M)k_1(r,\rho) + \mu_3k_2(r,\rho)\right).$$
The first two terms contribute to $h_0(r,\rho)$ and $h_1(r,\rho),$ as contributions of order $O(r^\delta).$ As for the third term, we use the fact that $\mu_3=r\rho \ov{\mu}_3$. Hence it contributes to $h_2(r,\rho),$ also as a term of order $O(r^\delta).$  The term $C_3(M) r^{\ov{\sigma}_3}$ will be later grouped with the corresponding term $C_4(M) r^{\ov{\sigma}_4}$ coming from $D_4\circ S.$

Let us now consider the other terms coming from $D_4\circ S(r,\rho)= C_4(M) r^{\ov{\sigma}_4}F(r,\rho)^{\ov{\sigma}_4}$. Again we use that $F$ is the identity when there is a center, i.e. all its terms are divisible in the ideal $I_C$. One of them is the term $*\ov{\mu}_3 r^{\ov{\sigma}_4}\rho$ coming from the term  $*\ov{\mu}_3 \rho$ of $F$. As mentioned above, all higher order terms $r^{\ov{\sigma}_4}o(\rho)$ have coefficients divisible by $\ov{\mu}_3$. Also, all terms in  $r^{\ov{\sigma}_4}\rho O(r)$ can be distributed in $h_0$, $h_1$ and $h_2,$  as terms of order $O(r^\delta)$. Hence, we only need to consider the pure terms in $o(r^{\ov{\sigma}_4})$. It suffices to show that all such terms can be divided in $\{\eps_0,\eps_1\}$. This comes from the fact that the computation of the pure terms in $r$ can be done in the plane $\rho=0$, and that the system restricted to this plane does not contain any term in $\ov{\mu}_3$. Since
 \begin{equation} \ov{\sigma}_4-\ov{\sigma}_3=\nu O_P(M_C) f(\nu) = r\rho O_P(M_C) f(\nu),\label{sigma_3_m_4} \end{equation}
  with $f$ of class $C^k$, we can replace everywhere $\ov{\sigma}_4$ by $\ov{\sigma}_3,$ up to terms of order $O(r^\delta),$ distributed in $h_0,h_1$ and $h_2.$

We are left with the terms $C_3(M) r^{\ov{\sigma}_3} -C_4(M)r^{\ov{\sigma}_4}$. We write this as 
\begin{align}\begin{split}
C_3(M) r^{\ov{\sigma}_3} -C_4(M)r^{\ov{\sigma}_4}&= (C_3(M) - C_4(M)) r^{\ov{\sigma}_3} + C_4(M)(r^{\ov{\sigma}_3} - r^{\ov{\sigma}_4})\\
&=(C_3(M) - C_4(M)) r^{\ov{\sigma}_3} + C_4(M)(\ov{\sigma}_3-\ov{\sigma}_4)r^{\ov{\sigma}_3} \omega(r, \ov{\sigma}_3-\ov{\sigma}_4).\end{split} \end{align}
The difference $C_3(M) - C_4(M)$ is $X_0r_0^{-\ov{\sigma}_3}(1- r_0^{\ov{\sigma}_3-\ov{\sigma}_4})$. Using \eqref{sigma_3_m_4}, the two  terms can be decomposed in sums of terms contributing to $h_0,h_1,h_2$,  as terms of order $O(r^\delta).$

\medskip
\noindent{\bf Finite cyclicity in the case $\sigma_0$ irrational.}
The displacement map $V$ in \eqref{form_V} is a special case of a universal family
\begin{equation}a_0(1+ h_0(r,\rho)) + a_1r^{\ov{\sigma}_3} (1+ h_1(r,\rho)) +a_2 r^{\ov{\sigma}_3}\rho (1+ h_2(r,\rho)),\label{eq_V_irrational}\end{equation}
with $h_0,h_1$ of order $O(r^\delta)$ and $h_2$ is of order $O(\rho)+O(r^\delta).$ Using that these three functions are of order $o(1),$ 
we show in Theorem~\ref{thderdiv}
 below that this family has at most two small zeros along any curve $r\rho=\mathrm{Cst}$ for $r,\rho<\delta$ for some small $\delta$. 
This implies that, either $V$ has at most two small zeros, or $V$ is identically zero, in which case we have a center.

\medskip
\noindent{\bf Adjustment of the proof when $\sigma_0=\frac{p}{q}$ with $q>1$.}
The adjustments are minimal. Indeed, the formula of the Dulac map is more complicated:
\begin{equation}D_i(r,\rho)= (r^{\ov{\sigma}} (C_i(M) + \phi(r, \rho)), r\rho),\label{D_i_rational}\end{equation}
with $\phi(r,\rho)$ as in Theorem~\ref{thtranstypeII}.
Hence $\phi(r, \rho)$ produces in $V$ new terms of order $O(r^\delta),$ distributed  in $h_0,h_1,h_2.$ 

\medskip
\noindent {\bf Adjustement of the proof when $\sigma_0=p$.} 
Here the first component of $D_i(r,\rho)$ has an additional term of the form 
$$\kappa_i(r,\rho) = \eta_i(\nu)\rho^p r^{\ov{\sigma}_i}\omega\left(\frac{r}{r_0},\ov{\sigma}_i-p\right).$$
All higher order terms can be distributed  in $h_0,h_1,h_2$ and we need only consider the term $\tilde E=\kappa_4\circ S-(1+\eps_1(M))\kappa_3=\left(\kappa_4\circ S-\kappa_4\right)+E$ with  $E=\kappa_4(r,\rho)-(1+\eps_1(M))\kappa_3(r,\rho) $.

 \begin{enumerate} 
 \item We consider first the term $\kappa_4\circ S-\kappa_4.$ Let $\beta=\ov{\sigma}_4-p$.
 We have that
$$\kappa_4(rF)-\kappa_4(r)=\eta_4\nu^pr^\beta\underbrace{\Big[F^\beta\omega_\beta\Big(\frac{Fr}{r_0}\Big)-\omega_\beta\Big(\frac{r}{r_0}\Big)\Big]}_{G(r,\rho)}.$$
Let us consider $G(r,\rho)$:
$$G(r,\rho)=F^\beta\Big(\omega_\beta\Big(\frac{Fr}{r_0}\Big)-\omega_\beta\Big(\frac{r}{r_0}\Big)\Big)+(F^\beta-1)\omega_\beta\Big(\frac{r}{r_0}\Big).$$ 
Since
$$\omega_\beta\Big(\frac{Fr}{r_0}\Big)-\omega_\beta\Big(\frac{r}{r_0}\Big)=\Big(\frac{r}{r_0}\Big)^{-\beta}\frac{F^{-\beta}-1}{\beta},$$
we obtain that 
$$G(r,\rho)=-\frac{F^\beta-1}{\beta}\Big(\frac{r}{r_0}\Big)^{-\beta}+(F^\beta-1)\omega_\beta\Big(\frac{r}{r_0}\Big)=\frac{F^\beta-1}{\beta}\Big(-\Big(\frac{r}{r_0}\Big)^{-\beta}+\beta\omega_\beta\Big(\frac{r}{r_0}\Big)\Big),$$
i.e. $G(r,\rho)=-\frac{F^\beta-1}{\beta}$,  and then $\kappa_4(rF)-\kappa_4(r)=-\eta_4\nu^pr^\beta\frac{F^\beta-1}{\beta}.$

As $F=1+*\bar\mu_3\rho (1+\rho \bar g(\rho))+rO_G(M_C),$  we have that $$\frac{F^\beta-1}{\beta}=*\bar\mu_3\rho (1+\rho \bar g(\rho))+rO_G(M_C),$$
 and then that
$$\kappa_4(rF)-\kappa_4(r)=-\eta_4\nu^pr^\beta(*\bar\mu_3\rho (1+\rho g(\rho))+rO_G(M_C)).$$

The term $rO_G(M_C))$ gives contributions of order $O(r^\delta)$ in $h_0,h_1,h_2.$  Next, the term $*\bar\mu_3\rho (1+\rho \bar g(\rho))$  gives the contribution $-*\eta_4\nu^{p-1}\rho (1+\rho\bar g(\rho))$ in $h_2.$ If $p\geq 2,$ this term is also of order $O(r\rho),$ and it is of order $O(\rho)$ if $p=1.$

 \item
We consider now:
\begin{align*} \begin{split}
E&=\rho^p\left[\left(\eta_4(\nu)-\eta_3(\nu)(1+\eps_1(M)\right)r^{\ov{\sigma}_3}\omega\left(\frac{r}{r_0},\ov{\sigma}_3-p\right)\right. \\
&\qquad+ \eta_4(\nu) \left(r^{\ov{\sigma}_4}-r^{\ov{\sigma}_3}\right)\omega\left(\frac{r}{r_0},\ov{\sigma}_3-p\right)\\
&\qquad \left.+ 
\eta_4(\nu) r^{\ov{\sigma}_4}\left(\omega\left(\frac{r}{r_0},\ov{\sigma}_3-p\right)- \omega\left(\frac{r}{r_0},\ov{\sigma}_4-p\right)\right)\right].\end{split} \end{align*}

The second term in the bracket is of the form 
$$ \eta_4(\nu) (\ov{\sigma}_3 - \ov{\sigma}_4)r^{\ov{\sigma}_3}\omega(r, \ov{\sigma}_3-\ov{\sigma}_4)\omega\left(\frac{r}{r_0},\ov{\sigma}_3-p\right).$$
Using \eqref{sigma_3_m_4},
this term can be distributed in $h_0,h_1,h_2,$ {as terms of order $O(r^\delta).$}
A similar argument holds for the third term. Indeed, we introduce a compensator 
\begin{equation}\Omega(\xi, \alpha, \beta)= \Omega_{\alpha,\beta}(\xi)=\begin{cases}  \frac{\omega(\xi,\alpha) - \omega(\xi,\beta)}{\alpha-\beta}, &\alpha\neq\beta, \\
\frac12(\ln \xi)^2, &\alpha=\beta,\end{cases}\label{Omega}\end{equation}
allowing to rewrite this term as 
$$\eta_4(\nu) r^{\ov{\sigma}_4}(\ov{\sigma}_3 - \ov{\sigma}_4)\Omega\left(\frac{r}{r_0}, \ov{\sigma}_3-p, \ov{\sigma}_4-p\right).$$
Again, using \eqref{sigma_3_m_4}, this term can be distributed in $h_0,h_1,h_2,$ as terms of order $O(r^\delta)$.
\end{enumerate}

This allows writing the displacement map as a sum of four terms 
\begin{align}\begin{split} V(r,\rho)&= -\eps_0(M) (1+ h_0(r, \rho))- C_3(M) \eps_1(M)r^{\ov{\sigma}_3} (1+ h_1(r,\rho)) \\
&\qquad+ *\ov{\mu}_3r^{\ov{\sigma}_3} \rho (1+ h_2(r,\rho)) + K(M) r^{\ov{\sigma}_3}\rho^p \omega\left(\frac{r}{r_0},\ov{\sigma}_3-p\right),\label{form_Vp}\end{split}\end{align}
with $h_0,h_1$ of order $O(r^\delta).$ Moreover,  $K(M)=\eta_4(\nu)-\eta_3(\nu)(1-\eps_1(M))= O_P(M_C)$. For $p\geq2$,  we conclude that the cyclicity is at most $3$ by  Theorem \ref{thpgeq2}.

For $p=1$, we  will prove in Theorem \ref{thp1} that  the cyclicity is at most $2.$ To this end, we will  use that $\eta_4(0)= - \eta_3(0)=\ov{\mu}_3$ and then that   $K(M)=*\ov{\mu}_3+O(\nu)O_P(M_C),$ in order to rewrite $V$ as:
\begin{align}\begin{split} V(r,\rho)&= -\eps_0(M) (1+ h_0(r, \rho))- C_3(M) \eps_1(M)r^{\ov{\sigma}_3} (1+ h_1(r,\rho)) \\
&+*\ov{\mu}_3r^{\ov{\sigma}_3} \rho (1+ h_2(r,\rho))+*\ov{\mu}_3r^{\ov{\sigma}_3}\rho \omega\left(\frac{r}{r_0},\ov{\sigma}_3-p\right)(1+ h_3(r,\rho)), \label{form_Vp3}\end{split}\end{align}
with $h_0,h_1$ and $h_3$ of order $O(r^\delta).$
\end{proof}

\subsection{Finite cyclicity of the boundary limit periodic sets of $(H_{13}^3)$}\label{sect:boundaryH_13_3}

\begin{theorem}\label{thm:boundary_hemicycle} The boundary limit periodic set of $(H_{13}^3)$ (see Figures~\ref{graphics}(c) and \ref{boundary_hemicycle}) has finite cyclicity.\end{theorem}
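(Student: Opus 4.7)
The plan is to mirror the proof of Theorem~\ref{thm:boundary_graphic}, adapted to the hemicycle geometry. By Theorem~\ref{thm.infty}, the graphic $(H_{13}^3)$ corresponds to $0<B_0<\frac12$, equivalently to the elliptic case $a_0 = 1-B_0 \in (\frac12,1)$ in the normal form \eqref{normal_form_ZR}. Accordingly, the relevant pair of opposite singular points on the divisor is $(P_1,P_2)$, whose normalizing parameter is $\sigma_0 = (2a_0-1)/a_0$, supplied by Theorem~\ref{thnormalformhyp}. The whole proof will take place inside the unfolding \eqref{infunfold} with the parameter $M$ of \eqref{par_M}.

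First, I would set up $C^k$ normalizing charts around $P_1$ and $P_2$ that are exchanged by the reversibility symmetry of \eqref{inf}, together with symmetric sections $\Sigma_i=\{\bar x_i = X_0\}$ and $\Pi_i=\{r=r_0\}$, where $X_0 r_0^{-\sigma_0}=1$ so that $C_1(M)=C_2(M)$ in the center case. Write the displacement map as $V = D_2\circ S - T\circ D_1$, where $S$ is the transition along the ``outer'' portion of the boundary and $T$ the one across the ``inner'' portion. Symmetry forces $T\equiv\mathrm{id}$ and $F(0,0)=1$ under the center conditions, exactly as in \eqref{D_i}--\eqref{map_S}. I would then read off $H$ and $F$ in the forms \eqref{eps_0}, \eqref{eps_1} and \eqref{eq_F}: the invariant parabola \eqref{Parabola.invariant}, which exists whenever $B\neq 0$ and hence throughout the range $0<B_0<\frac12$, together with the divergence computation, again show that $\{\varepsilon_0,\varepsilon_1,\bar\mu_3\}$ generate the center ideal $I_C$.

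Next, I would rewrite $V$ along each leaf $r\rho=\mathrm{Cst}$ in the universal form \eqref{eq_V_irrational} in the irrational case, or in the forms \eqref{form_Vp}--\eqref{form_Vp3} in the resonant case $\sigma_0=p/q$, using the compensators $\omega$ and $\Omega$ from \eqref{compensator} and \eqref{Omega}, the identity $\mu_3 = r\rho\,\bar\mu_3$, and the fact that $\bar\sigma_1 - \bar\sigma_2 = r\rho\,O_P(M_C)f(\nu)$ to absorb all higher order contributions into the $h_i$ as $O(r^\delta)$. The derivation-division algorithms of Theorems~\ref{thderdiv}, \ref{thpgeq2} and \ref{thp1} then bound the number of zeros of $V$ along each leaf by at most $3$ (or by $2$ in the special case $p=1$), yielding finite cyclicity.

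The main obstacle is checking that the reversibility symmetry of the quadratic center does send the chart around $P_1$ onto the chart around $P_2$, and does exchange the two normalizing coordinates constructed via the normalizing operator of Theorem~\ref{thnormalformhyp}, so that the identifications $T\equiv\mathrm{id}$, $F(0,0)=1$ and $C_1=C_2$ can be forced by choosing symmetric sections. This is the analogue, for the hemicycle configuration, of what was verified in the convex case, and is the reason why Section~1 emphasizes that the normalizing chart is produced by an operator. Once the symmetry transfer is in place, the computations of $\varepsilon_0,\varepsilon_1$ (via the invariant parabola, as in Lemma~\ref{proof_eps_0} and Proposition~\ref{proof_eps_1}) and of the leading $\bar\mu_3\rho$-term of $F$ at $r=0$ (as in Lemma~\ref{coef_rho}) carry over verbatim, and the rest of the argument is identical to the proof of Theorem~\ref{thm:boundary_graphic}.
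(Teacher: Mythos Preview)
Your proposal has a genuine structural gap. The hemicycle $(H_{13}^3)$ is not topologically the same as the convex graphics treated in Theorem~\ref{thm:boundary_graphic}: in addition to the nilpotent point on the $y$-axis at infinity, the boundary limit periodic set passes through the two hyperbolic saddle points $P_\ell$ and $P_r$ on the $x$-axis at infinity (the equator of the Poincar\'e sphere). Consequently the ``inner'' transition you call $T$ is \emph{not} a regular $C^k$ map between $\Pi_1$ and $\Pi_2$; it must be decomposed as $T_r^{-1}\circ D_r^{-1}\circ T\circ D_\ell\circ T_\ell$, where $T_\ell,T_r$ are regular transitions along the equator and $D_\ell,D_r$ are genuine two-dimensional Dulac maps near $P_\ell,P_r$, with hyperbolicity ratios $\tau_\ell=1/\tau_r=(1-B)/B$. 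The paper accordingly takes the displacement on a section $\Pi_r$ near $P_r$, namely $V=T\circ D_\ell\circ T_\ell\circ D_2-D_r\circ T_r\circ D_1\circ S$, and this changes the leading exponent from $\bar\sigma_2$ to $\bar\sigma_2+\tau_\ell$.

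Several consequences of this decomposition invalidate the details you sketched. First, the regular piece $T$ now lives along the invariant line $y=-1$ (invariant when $\mu_3=\mu_4$ in the reparametrized family \eqref{infunfold_bis}), so $\eps_0=*(\mu_4-\nu\bar\mu_3)$ and $\eps_1=*\mu_5+O(\mu_4)+O(\nu)O(\bar\mu_3)$ are read off from that line and its divergence, not from the invariant parabola and Lemmas~\ref{proof_eps_0}--\ref{proof_eps_1}. Second, because the connection along the equator is fixed, the coefficients $\eta_i$ in the Dulac maps $D_1,D_2$ vanish identically when $\sigma_0\in\mathbb N$; together with $\bar\sigma_2+\tau_\ell=\frac{1-B-B^2}{B(1-B)}\neq1$ for $B\neq\frac12$, this means no resonant term ever appears, and the displacement reduces directly to the three-term form handled by Theorem~\ref{thderdiv} with cyclicity at most $2$. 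Theorems~\ref{thpgeq2} and \ref{thp1} are not needed here.
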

\begin{figure}\begin{center}
\includegraphics[width=7cm]{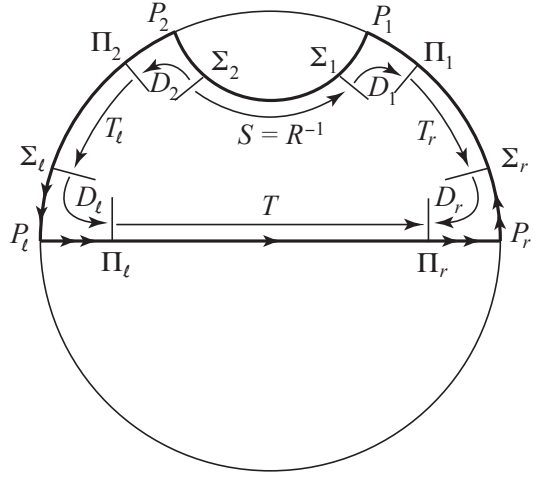}
\caption{The boundary graphic through $P_1$ and $P_2$ and the four sections $\Sigma_i$ and $\Pi_i$, $i=1,2$, in the normalizing coordinates.}\label{boundary_hemicycle}\end{center}\end{figure}
\begin{proof} The proof is very similar to that of Theorem~\ref{thm:boundary_graphic}. The graphic occurs in the family \eqref{inf} for $B<\frac12$, which corresponds to $\frac12<a<1$, but we prefer to use the following equivalent unfolding inside quadratic systems (only parameters' names are changed so that they play similar role as in Theorem~\ref{thm:boundary_graphic})
\begin{equation}
\left\{\begin{array}{ll}
\dot x&=-y+Bx^2 +\mu_2y^2 + \mu_5 x\\
\dot y&=x+xy+\mu_3y^2+\mu_4y.
\end{array}\right.
\label{infunfold_bis}
\end{equation}
The point $P_4$ (resp. $P_3$) is replaced by $P_1$ (resp. $P_2$). The quantity $\sigma_i$ is now given by $\sigma_i=\frac{2a-1}{a}$.
The main difference with Theorem~\ref{thm:boundary_graphic} is that the transition from $\Pi_2$ to $\Pi_1$ is replaced by the composition $T_r^{-1} \circ D_r^{-1} \circ T\circ D_\ell\circ T_\ell$. The transitions $T_\ell$ and $T_r$ are along the equator of the Poincar\'e sphere and hence preserve the connection (no translation terms). 
The saddle points $P_\ell$ and $P_r$ have inverse hyperbolicity ratios: $\tau_\ell= 1/\tau_r= \frac{1-B}{B}<1$.
Hence, it is better to consider a displacement map  
\begin{equation}V: \Sigma_2\rightarrow \Pi_r, \qquad V=T\circ D_\ell\circ T_\ell \circ D_2  - D_r\circ T_r\circ D_1\circ S.\label{displ_hemicycle}\end{equation}
The computation of $S$ is the same as before. 

\medskip
\noindent{\bf Computation of $T_\ell$ and $T_r$.} 
$T_r$ and $T_\ell$ are regular $C^k$-transitions with no translation terms. They can be computed in the coordinates $(v,w)= (-\frac{x}{y},\frac1{y})$. The transformed system in these coordinates is given in \eqref{coord_vw}.
The transitions take place along $w=0$. Along this line, $\mathrm{div}=(3-2B)v -2\mu_3$.
Hence $T_r'(0)-T_\ell'(0)=  O(\mu_3) = \nu O(\ov{\mu}_3)$.
This property is preserved in the normalizing coordinates. 

\medskip
\noindent{\bf Computation of $T$.} 
The transition $T$ in studied in \eqref{infunfold}. The line $y=-1$ is invariant under $\mu_3=\mu_4$. 
Hence, the constant term is of the form 
\begin{equation}T(0)=\eps_0(M)=*(\mu_4-\nu\ov{\mu}_3).\label{coef_const}\end{equation}
Under the condition $\eps_0=0$, we have  $\mathrm{div}|_{y=-1}= (2B+1)x+\mu_5-\nu\ov{\mu}_3$. 
Hence, \begin{equation}T'(0)= \eps_1(M)= *\mu_5+O(\mu_4) + O(\nu)O(\ov{\mu}_3).\label{coef_linear}\end{equation}
The equations \eqref{coef_const} and \eqref{coef_linear} remain valid in the normalizing coordinates, and we call the corresponding coefficients $\tilde{\eps}_0$ and $\tilde{\eps}_1$. 

\medskip
\noindent{\bf The Dulac maps $D_\ell$ and $D_r$. } We first localize the system \eqref{infunfold_bis} using coordinates $(u,z)= (\frac{y}{x},\frac1{x})$. The normalizing coordinates are of the form $(\ov{u}_i,z)$, $i\in\{\ell,r\}$. Then,
\begin{equation} 
D_i(z)=  \begin{cases} C_i(M) z^{\tau_\ell}, &\frac{1-B_0}{B_0} \notin \Q,\\ 
C_i(M) z^{\tau_\ell} (1 + \zeta(z, M)), &\frac {1-B_0}{B_0} \in \Q, \end{cases}\end{equation} 
with $\zeta$, a ${\cal C}^k$-function on monomials (see Appendix II).

\medskip
\noindent{\bf The Dulac maps $D_1$ and $D_2$. } They are given in Theorem~\ref{thtranstypeI}. Since the connection along the equator is fixed, then the coefficient $\eta_i$ vanishes identically when $\sigma_0\in \N$.

Hence, the displacement map $V(r,\rho)$ has the form
\begin{equation} V(r,\rho) =  \tilde{\eps}_0(1+ h_0(r,\rho)) + *\tilde{\eps}_1r^{\ov{\sigma}_2+\tau_\ell}(1+ h_1(r,\rho)) - *\ov{\mu}_3 r^{\ov{\sigma}_2+\tau_\ell}\rho(1+ h_2(r,\rho)).\end{equation}
This equation contains no resonant monomials since 
$\ov{\sigma}_2+\tau_\ell= \frac{1-B-B^2}{B(1-B)}\neq1$ as soon as $B\neq\frac12$. We conclude that the cyclicity is at most two by Theorem~\ref{thderdiv}. 
\end{proof}

\subsection{Finite cyclicity of $(I_{14}^1)$}
 We now prove Theorem~\ref{thMain2}, i.e. that 
the graphic $(I_{14}^1)$ has finite cyclicity inside quadratic systems (see Figure~\ref{graphics}(a)). 

\bigskip

\noindent{\it Proof of Theorem~\ref{thMain2}.} Such a graphic occurs for system \eqref{inf} when $B_0>1$, and its deformation in quadratic systems is given in \eqref{infunfold}. As usual, we should normally consider all limit periodic sets of Table~\ref{tab.shhconvex}. It was shown in \cite{ZR} that a graphic through a nilpotent saddle point has finite cyclicity inside any $C^\infty$-unfolding under the generic conditions that the return map $P$ along the graphic has a derivative different from one and that the nilpotent saddle point has codimension 3. But the only limit periodic sets of Table~\ref{tab.shhconvex} for which we use the genericity hypotheses are the boundary limit periodic sets which have been treated in Theorem~\ref{thm:boundary_graphic}, and the intermediate and lower limit periodic sets of Sxhh1 and Sxhh5.

For these limit periodic sets, we only have Dulac maps of the first type as in Theorem~\ref{thtranstypeI}. Hence, we can work with a 1-dimensional displacement map, which we take as $V: \Sigma_3\longrightarrow\Pi_4$, $V=D_4\circ S-T\circ D_3$ (see figure~\ref{other_graphic}). As before the sections $\Sigma_i$ and $\Pi_i$ are parameterized by the normalizing coordinate $\ov{x}_i$ near $P_i$, which are chosen so that $S$ and $T$ are the identity in the center case.  
\begin{figure}\begin{center}
\includegraphics[width=5cm]{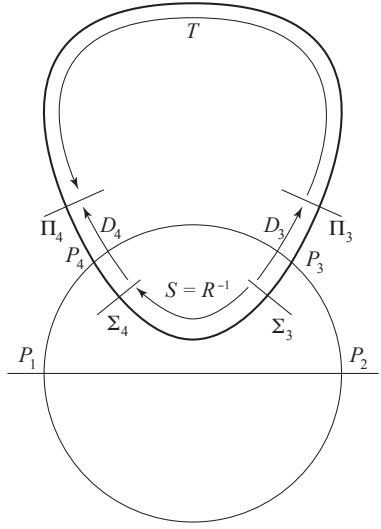}
\caption{Intermediate and lower limit periodic sets of Sxhh1 and Sxhh5: the four sections $\Sigma_i$ and $\Pi_i$, $i=3,4$, in the normalizing coordinates near $P_3$ and $P_4$.}\label{other_graphic}\end{center}\end{figure}

 The technique is to write $V$ in the form of a finite sum
 
\begin{equation}V(\ov{x}_3, \mu)=\tilde{\epsilon}_0+\nu^{\ov{\sigma}}\left( \sum_{i=1}^n \tilde{\eps}_ih_i(\ov{x}_3,\mu)\right), \label{finite_form}\end{equation}
for some $\ov{\sigma}>0$. The parameters are the same as in \eqref{par_M} and \eqref{M_C}. We write little details since they are very similar to \cite{RR}.
\medskip

\noindent{\bf The intermediate graphics.} For these graphics,  the map  $V(\ov{x}_3, \mu)$ is $C^k$ in $\ov{x}_3$. 
Under the condition $\mu_2=\mu_3=0$ for a nilpotent saddle, \eqref{infunfold} has an invariant parabola for $\mu_4=0$, which is the only possible connection at a nilpotent saddle. Hence, $T$ has a constant term of the form $*\mu_4+ O(\mu_3) + \mu_5O(M)$.
The constant term of the transition $S$ has the form $ O(\ov{\mu}_3)$ since $\ov{\mu}_2$ respects the symmetry, and hence does not contribute to the breaking of the connection. 

When $\sigma_0\notin \N$, this yields that the constant term $\tilde{\eps}_0$ in the displacement map has the form
$\tilde{\eps}_0= *\mu_4 + O(\nu)O(\ov{\mu}_3) + \mu_5 O(M).$

When $\sigma_0=p\in\N$, there are additional terms 
\begin{align}\begin{split}& \eta_3\rho_0^p\left(\frac{\nu}{\nu_0}\right)^{\ov{\sigma}_3}\omega\left(\frac{\nu}{\nu_0},\alpha_3\right)- \eta_4\rho_0^p\left(\frac{\nu}{\nu_0}\right)^{\ov{\sigma}_4}\omega\left(\frac{\nu}{\nu_0},\alpha_4\right)\\
&\qquad=(\eta_3- \eta_4)\rho_0^p\left(\frac{\nu}{\nu_0}\right)^{\ov{\sigma}_3}\omega\left(\frac{\nu}{\nu_0},\alpha_3\right)\\
&\qquad\qquad+ \eta_4(\alpha_3-\alpha_4)\rho_0^p\left(\frac{\nu}{\nu_0}\right)^{\ov{\sigma}_3}\omega\left(\frac{\nu}{\nu_0},\alpha_3-\alpha_4\right) \omega\left(\frac{\nu}{\nu_0},\alpha_3\right) \\
&\qquad \qquad+ \eta_4(\alpha_3-\alpha_4)\rho_0^p\left(\frac{\nu}{\nu_0}\right)^{\ov{\sigma}_4}\Omega\left(\frac{\nu}{\nu_0},\alpha_3, \alpha_4\right).
\end{split}\end{align}
In this expression $\eta_3-\eta_4=O_P(M_C)$ and $\alpha_3-\alpha_4= O_P(M_C)O(\nu)$. Hence, in all cases we have
\begin{equation}\tilde{\eps}_0= *\mu_4 + O(\nu)O(\ov{\mu}_3) + \mu_5 O(M)+ O(\nu)O_P(M_C).\label{eps_tilde_0}\end{equation}

The linear term has the form $\nu^{\ov{\sigma}_3} T'(0) - \nu^{\ov{\sigma}_4}S'(0))$. Moreover, $S'(0)|_{\rho=0}\equiv 1$ precisely when $\ov{\mu}_3=0$. Also, Lemma~\ref{proof_eps_1} shows that $T'(0)-1= *\mu_5+O(\mu_4)+O(\mu_3) $.
Considering that $\ov{\sigma}_3-\ov{\sigma}_4= O(\nu)$, then 
$$ \nu^{\ov{\sigma}_4}= \nu^{\ov{\sigma}_3}(1+ (\ov{\sigma}_3-\ov{\sigma}_4)\omega(\nu,\ov{\sigma}_3-\ov{\sigma}_4)= \nu^{\ov{\sigma}_3} (1+ O(\nu)).$$ This yields
\begin{equation}\tilde{\eps}_1= \nu^{\ov{\sigma}_3}\left(*\mu_5+ O(\mu_4)+O(\nu) O(\ov{\mu}_3)\right).\label{eps_tilde_1}\end{equation}

Now, because of the funneling effect, any nonlinearity on the side of $T$ has a high coefficient in $\nu$ which damps it. Hence, the only significant nonlinearities are on the side of $S$. We are sure that $S$ is nonlinear when $\ov{\mu}_3\neq 0$. This comes from the fact that the graphic belongs to a family of graphics. In the case of Sxhh1, this family ends in a lower graphic with a saddle point and its hyperbolicity ratio $\tau$ is different from $1$ precisely when $\ov{\mu}_3\neq0$, yielding that $S(\ov{x}_3)= C_0+ C_1\ov{x}_3^{\tau} + o(\ov{x}_3^\tau),$ with $C_1\not =0,$  for graphics near the saddle point, and hence  that $S$ is nonlinear on the whole section $\Sigma_3$. Then, for any graphic occuring for a value $\ov{x}_{3,0}$, there exists $n$ such that $S^{(n)}(\ov{x}_3)= c_{n,3}\ov{\mu}_3\neq0$. Hence, $V^{(n)}(\ov{x}_{3,0})= \nu^{\ov{\sigma}_4}\left[c_{n,3}\ov{\mu}_3+ O(\nu)O_P(M_C)\right]= \tilde{\eps}_n$. Moreover, for all graphics except a few isolated ones we have that $n=2$. The same argument can be applied for Sxhh5 since the connection is fixed between the two saddles and the product of their hyperbolicity ratios is different from $1$ precisely when $\mu_3\neq0$. 
Hence, we have written $V$ under the form \eqref{finite_form} with $h_i(\ov{x}_3)=\ov{x}_3^i(1+O(\ov{x}_3))$. We conclude to finite cyclicity by means of Theorem~\ref{thderdiv}.\medskip

{\bf The lower graphic of Sxhh1. } The study  is very similar and divided in two cases. When $\ov{\mu}_3\neq0$, it was already shown in \cite{ZR} that the lower graphic of Sxhh1 has finite cyclicity. This comes from the fact that the hyperbolicity ratio $\tau$ at the saddle point is non equal to $1$ precisely when $\ov{\mu}_3\neq0$, in which case we conclude to finite cyclicity because of the nonlinearity of $S$.  Hence the difficult case is the neighborhood of $\ov{\mu}_3=0$ since, for this value, $\tau_0=1$. In that case  we reparameterize the section $\Sigma_3$ by means of $\tilde{x}_3= \ov{x}_3- c_0(M)$, so that $\tilde{x}_3=0$ corresponds to the unstable manifold of the saddle point on the blow-up sphere. Then, as before, we write $V$ as a sum of  terms:
\begin{equation}
 V(\tilde{x}_3,M)=\tilde{\eps}_0h_0(\tilde{x}_3,M)+\ov{\mu}_3 \tilde{x}_3\omega(\tilde{x}_3,\tau-1)h_3(\tilde{x}_3,M) + \tilde{\eps}_1\tilde{x}_3h_1(\tilde{x}_3,M), \end{equation}
 with $h_i(0,0)\neq0$. We conclude to finite cyclicity by means of Theorem~\ref{thderdiv}.

\medskip

{\bf The lower graphic of Sxhh5. } Such a graphic occurs for $\ov{\mu}_2>0$. Because the connection is fixed between the two saddles, the map $S$ can easily be computed and has the form $c_0+c_1\ov{x}_3^\tau + o(\ov{x}_3^\tau)$, where $\tau= 1-\frac{2\ov{\mu}_3}{\sqrt{-\frac{\ov{\mu}_2}{a}}+\ov{\mu}_3}$ is the product of the two hyperbolicity ratios. Again, we reparameterize the section $\Sigma_3$ by means of $\tilde{x}_3= \ov{x}_3- c_0(M)$, so that $\tilde{x}_3=0$ corresponds to the unstable manifold of the right saddle point on the blow-up sphere. This allows writing the map $V$ in the form 
$$\begin{cases}
V(\tilde{x}_3)=\sum_{i=0}^{\max(\lfloor \tau\rfloor,1)} \tilde{\eps}_i \tilde{x}_3^ih_i(\tilde{x}_3,M) + \ov{\mu}_3 \tilde{x}_3^\tau h_\tau(\tilde{x}_3,M),&\tau_0\notin\N,\\
V(\tilde{x}_3)=\sum_{i=0}^{\tau_0} \tilde{\eps}_i \tilde{x}_3^ih_i(\tilde{x}_3,M) + \ov{\mu}_3 \tilde{x}_3^{\tau_0}\omega(\tilde{x}_3,\tau-\tau_0) h_\tau(\tilde{x}_3,M),&\tau_0\in\N,\end{cases}$$
with $h_i(0,0)\neq0$. We conclude to finite cyclicity by means of Theorem~\ref{thderdiv}. \hfill $\Box$
\vfill\eject

\section{Appendix I --- Hyperbolic fixed points}
We will consider  
 germs of smooth  family of $3$-dimensional vector fields $X_{\mu,\sigma}$ at $(0)\in \R^{3},$ with coordinates $(u,v,y),$ which are quasi-linear of the form:

\begin{equation}\label{eq1}
X_{\mu,\sigma}:
\begin{cases}{\dot u}= u,  \\
{\dot v} =-v,\\
 {\dot y} = -\sigma y+F_\mu(u,v,y),    \end{cases}
\end{equation}
where   $\sigma$  is a parameter in a neighborhood of $\sigma_0\in \R^+$,  and $\mu$ a parameter in a neighborhood of $\mu_0$ in some Euclidean space.
Moreover, $F_\mu= O(|(u,v,y)|^2)$ at the origin, for any value of the parameter $(\mu,\sigma).$ {\it  The system has the first integral: $\nu=uv.$}
\vskip10pt

\subsection{\bf Normal form}

It is possible to find  local normal form coordinates for $X_{\mu,\sigma}$ by a coordinate change preserving the coordinates $u$ and $v.$ 
More precisely, 
 we have  the following normal form result:
\begin{theorem}\label{thnormalformhyp}
There exists a normalizing operator $\mathcal{N}$ defined on each pair $(X_{\mu,\sigma},k)$, where $X_{\mu,\sigma}$ is a family  as above  and $k\in \N^*$, such that 
, $$\mathcal{N}(X_{\mu,\sigma},k)= \left(\delta_k, K(k), \eps_k, \eta_k,G_{\mu,\sigma}\right),$$
where 
 $$(u,v,y)\rightarrow (u,v,Y=G_{\mu,\sigma}(u,v,y)),$$
is a parameter-depending change of coordinates of class $C^k$ defined 
 defined for $|\sigma-\sigma_0|\leq \delta$, $|\mu-\mu_0|<\eps_k,$ and $|(u,v,y)|<\eta_k$, such that $dG_{\mu,\sigma}(0,0,0)=\mathrm{Id},$ 
 which brings $X_{\mu,\sigma}$ to the following polynomial normal form of degree $K(k)$:

\begin{enumerate}
\item If $\sigma_0\not\in \Q:$
 \begin{equation}\label{eq2}
X^N_{\mu,\sigma}:
\begin{cases}{\dot u}=u,   \\
{\dot v} =-v,\\
 {\dot Y} =-(\sigma+\varphi_{\mu,\sigma}(\nu))Y.\end{cases}
\end{equation} 

\item  If $\sigma_0=\frac{p}{q}\in \Q,$ with $(p,q)=1$ when $q\not  =1:$
 
  \begin{equation}\label{eq3}
X^N_{\mu,\sigma}:
\begin{cases}{\dot u}=u,   \\
{\dot v} =-v,\\
 {\dot Y} =-\Big(\sigma+\varphi_{\mu,\sigma}(\nu)\Big)Y+\Phi_{\mu,\sigma}(\nu,u^pY^q)Y+v^p\eta_{\mu,\sigma}(\nu) , \end{cases}
\end{equation} 
with  $\eta_{\mu,\sigma}\equiv 0$ when $\sigma_0\not\in \N$ ( $q\not  =1$).
\end{enumerate}
The functions  $\varphi_{\mu,\sigma},\Phi_{\mu,\sigma},\eta_{\mu,\sigma}$ are polynomials  of degree $ \leq K(k)$, with $C^\infty$ coefficients in $(\mu,\sigma)$  and $\Phi_{\mu,\sigma}(\nu,0)\equiv 0.$

\end{theorem}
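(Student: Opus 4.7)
The plan is to realize the normalizing operator $\mathcal{N}$ by a Poincar\'e--Dulac reduction acting only on the $y$-coordinate, iterated up to a finite order $K(k)$, followed by a Sternberg-type $C^k$ flattening to remove the residual tail. By construction only $y$ is modified, so that $u$ and $v$ are preserved automatically.

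First, I would compute the resonances. Writing $X_L = u\partial_u - v\partial_v - \sigma y\partial_y$ for the linear part, the homological operator $[X_L,\cdot]$ acts on a monomial vector field $u^a v^b y^c \partial_y$ by the multiplier $\omega(a,b,c,\sigma) = a - b + (1-c)\sigma$. The non-eliminable triples are those with $\omega(a,b,c,\sigma_0) = 0$. A direct enumeration gives: for $\sigma_0 \notin \mathbb{Q}$, only $(a,b,c)=(k,k,1)$, producing the $\varphi_{\mu,\sigma}(\nu)\,Y$ term; for $\sigma_0 = p/q$ in lowest terms, additionally $(k+\ell p,\, k,\, 1+\ell q)$ for $\ell \geq 1$, producing $\Phi_{\mu,\sigma}(\nu,u^p Y^q)\,Y$ with $\Phi_{\mu,\sigma}(\nu,0)\equiv 0$; and, only when $q = 1$, also $(k-p,\,k,\,0)$ for $k \geq p$, producing $v^p \eta_{\mu,\sigma}(\nu)$. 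This accounts precisely for the three families of resonant terms in the statement, and explains why $\eta_{\mu,\sigma}\equiv 0$ whenever $q\neq 1$.

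Second, I would solve the homological equation degree by degree. For each non-resonant triple of total degree $d \leq K(k)$, the polynomial substitution $y \mapsto y + \omega(a,b,c,\sigma)^{-1}\, u^a v^b y^c$ eliminates the corresponding monomial modulo terms of degree $\geq d+1$. Since only finitely many non-resonant triples with $a+b+c \leq K(k)$ appear, and each has $\omega(\cdot,\sigma_0) \neq 0$, one can choose $\delta_k > 0$ so that $|\omega|$ is uniformly bounded below for $|\sigma-\sigma_0|\leq \delta_k$. Iterating produces a polynomial change $G^{(0)}_{\mu,\sigma}$ of degree $\leq K(k)$, $C^\infty$ in $(\mu,\sigma)$, tangent to the identity at the origin and preserving $(u,v)$, which brings $X_{\mu,\sigma}$ into the target normal form plus a remainder $R_{\mu,\sigma}(u,v,y)\,\partial_y$ flat to order $K(k)+1$ at the origin.

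Third, I would absorb $R_{\mu,\sigma}$ by a Sternberg-type $C^k$ flattening in a parameter-depending form: a $C^k$ diffeomorphism $y \mapsto y + H_{\mu,\sigma}(u,v,y)$, flat to order $K(k)$ at the origin and $C^\infty$ in parameters, whose existence follows from a standard path/Moser argument for hyperbolic fixed points once $K(k)$ is taken large enough relative to $k$ and to the spectrum on the compact set $\{|\sigma-\sigma_0|\leq\delta_k\}$. The composition yields $G_{\mu,\sigma}$, and $\mathcal{N}(X_{\mu,\sigma},k) = (\delta_k,K(k),\eps_k,\eta_k,G_{\mu,\sigma})$ is assembled from the bounds produced at each stage. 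The main obstacle is this last step: $K(k)$ must be chosen large enough that the flattening homotopy recovers $k$ derivatives of regularity uniformly in parameters on $\{|\sigma-\sigma_0|\leq\delta_k\}$. Making every choice canonical --- so that $\mathcal{N}$ is truly an \emph{operator} --- is essential so that symmetries of $X_{\mu,\sigma}$, most importantly the reversibility underlying the center stratum in the quadratic application, are automatically inherited by $G_{\mu,\sigma}$.
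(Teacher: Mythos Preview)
Your proposal is correct and follows essentially the same two-stage architecture as the paper: a uniquely determined polynomial Poincar\'e--Dulac change in the $y$-coordinate up to degree $K(k)$, followed by a homotopy-type argument to kill the flat remainder, with $\delta_k$ chosen to freeze the resonance pattern. Your enumeration of the resonant monomials is more explicit than the paper's sketch and is accurate. The only place where the paper is more specific is the second stage: rather than invoking a generic Sternberg/Moser flattening, it splits the remainder as $R=R_1+R_2$ with $R_1=O(u^{\lfloor K(k)/2\rfloor})$ and $R_2=O(|(v,z)|^{\lfloor K(k)/2\rfloor})$ and kills each piece separately by the homotopy method of Ilyashenko--Yakovenko; this concrete decomposition is what guarantees the step is algorithmic (hence canonical) and that $K(k)$ can be chosen effectively in terms of $k$ and the spectrum.
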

\begin{proof}
The proof is standard in the literature, and we only recall the main steps. 

The degree $K(k)$ can be determined algorithmically from the eigenvalues $\{1,-1,-\sigma_0\}$. 

The number $\delta_k$ is chosen sufficiently small so as not to introduce any new resonant terms of degree $\leq K(k)$ for some $\sigma\in [\sigma_0-\delta_k, \sigma_0+\delta_k]$.

The first step is to bring the system to normal form up to degree $K(k)$
\begin{equation}\label{eqR}
X^p_{\mu,\sigma}:
\begin{cases}{\dot u}=u,   \\
{\dot v} =-v,\\
 {\dot z} =P(\sigma,\mu, u,v, z) + R(\sigma,\mu, u,v, z).\end{cases}
\end{equation} 
where $P(\sigma,\mu, u,v, z)$ is a polynomial in $u,v,z$ of degree $K(k)$ containing only resonant terms, and $R(\sigma,\mu, u,v, z)= o(|(u,v,z)|^{K(k)}$. This can be done by means of a polynomial change of coordinate $$y= z+ \sum_{\substack{i+j+\ell=2\\ i-j+\sigma_0(\ell-1)\neq0}}^{K(k)} a_{ij\ell}r^i\rho^jz^\ell.$$
Because this change of coordinate is tangent to the identity and contains no resonant monomial, then it is uniquely determined.

The second step is to kill the remainder $R$ in \eqref{eqR}. For this purpose, we decompose $R$ as $R= R_1+R_2$, with 
$R_1= O(u^{\lfloor K(k)/2\rfloor})$ and $R_2=O(|(v,z)|^{\lfloor K(k)/2\rfloor})$. Each part is killed by the homotopy method. The details are exactly the same as in \cite{IY}. Again, this step is algorithmic. 
\end{proof}

\vskip20pt
\subsection{\bf Properties of compensators}

This section is devoted to properties of different fonctions useful for the expression of the results, and in particular the so-called compensators $\omega_\alpha(\xi)$ and $\Omega_{\alpha,\beta}(\xi)$ defined in \eqref{compensator} and \eqref{Omega}.

First, we  introduce  the analytic function
\begin{equation}\kappa(\eta)=
\begin{cases} \frac{e^\eta-1}{\eta},&\eta\not =0,\\
1, &\eta=0.\end{cases}\label{def:kappa}\end{equation}
 The following Lemma gives some useful properties of $\kappa$:

\begin{lemma}\label{lemkappa}

 The function $\kappa$  is an entire analytic real function whose series is given by
$\kappa(\eta)=\sum_0^{+\infty}\frac{\eta^n}{(n+1)!}$. It follows that $\frac{d\kappa}{d\eta}(\eta)<\kappa(\eta)<e^\eta$ for $\eta>0.$  Moreover,  $\kappa(\eta)>0,$  $\frac{d\kappa}{d\eta}(\eta)>0$, and $\frac{d^2\kappa}{d\eta^2}(\eta)>0,$ for all $\eta\in \R.$
\end{lemma}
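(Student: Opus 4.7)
The plan is to derive everything from the integral representation
\[
\kappa(\eta) = \int_0^1 e^{s\eta}\,ds,
\]
which is easily verified: for $\eta\neq 0$, integration gives $\frac{e^\eta-1}{\eta}$, and for $\eta=0$ the integrand is identically $1$ so the integral equals $1$. From this representation, expanding $e^{s\eta}$ term by term and integrating against $s\,ds$ on $[0,1]$ yields
\[
\kappa(\eta) = \sum_{n=0}^{+\infty} \int_0^1 \frac{(s\eta)^n}{n!}\,ds = \sum_{n=0}^{+\infty} \frac{\eta^n}{(n+1)!},
\]
which converges on all of $\mathbb{R}$ (indeed on all of $\mathbb{C}$, bounded term-by-term by the series for $e^{|\eta|}$), so $\kappa$ is entire analytic.

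Differentiating the integral representation under the integral sign (justified by uniform convergence on compacts) gives
\[
\frac{d\kappa}{d\eta}(\eta) = \int_0^1 s\,e^{s\eta}\,ds, \qquad \frac{d^2\kappa}{d\eta^2}(\eta) = \int_0^1 s^2\,e^{s\eta}\,ds.
\]
The positivity of $\kappa$, $\frac{d\kappa}{d\eta}$ and $\frac{d^2\kappa}{d\eta^2}$ on all of $\mathbb{R}$ is then immediate, since in each case the integrand is strictly positive on $(0,1]$.

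For the two inequalities for $\eta>0$, I would compare the integrands pointwise on $(0,1)$. Since $0<s<1$ and $\eta>0$, one has $s\,e^{s\eta}<e^{s\eta}<e^{\eta}$, and integrating over $[0,1]$ preserves the strict inequalities (the integrands are continuous and differ on a set of positive measure), giving
\[
\frac{d\kappa}{d\eta}(\eta) = \int_0^1 s\,e^{s\eta}\,ds < \int_0^1 e^{s\eta}\,ds = \kappa(\eta) < \int_0^1 e^{\eta}\,ds = e^{\eta}.
\]

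There is no real obstacle here: the statement is a collection of elementary facts about an explicit entire function, and the integral representation delivers all of them uniformly. The only thing to be a bit careful about is matching the value $\kappa(0)=1$ with the series and integral formulas at $\eta=0$, but this is a routine check of removable singularities.
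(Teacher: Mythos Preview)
Your proof is correct and is genuinely different from the paper's. The paper works directly with the power series and the closed formulas
\[
\frac{d\kappa}{d\eta}(\eta)=\frac{\eta e^\eta - e^\eta + 1}{\eta^2},\qquad
\frac{d^2\kappa}{d\eta^2}(\eta)=\frac{(\eta^2-2\eta+2)e^\eta - 2}{\eta^3},
\]
and establishes the sign of each by case analysis (comparing $e^\eta$ with $\frac{1}{1-\eta}$, then $e^{-\eta}$ with $1-\eta+\frac{1}{2}\eta^2$, and so on). Your integral representation $\kappa(\eta)=\int_0^1 e^{s\eta}\,ds$ replaces all of this by a single observation: differentiating under the integral sign gives $\kappa^{(j)}(\eta)=\int_0^1 s^j e^{s\eta}\,ds$, from which positivity is immediate and the chain $\kappa'<\kappa<e^\eta$ for $\eta>0$ follows by pointwise comparison of integrands. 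This is more elementary, more uniform, and it also makes the series expansion transparent via $\int_0^1 s^n\,ds=\frac{1}{n+1}$. One small slip of wording: you wrote ``integrating against $s\,ds$'' where you meant simply $ds$; the computation you actually perform is the correct one.
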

\begin{proof}

We have that $\kappa(\eta)=\frac{1}{\eta}(\sum_0^{+\infty}\frac{\eta^n}{n!}-1)=\sum_0^{+\infty}\frac{\eta^n}{(n+1)!}$  and then:  $\frac{d\kappa}{d\eta}(\eta)=\sum_0^{+\infty}\frac{\eta^n}{n!(n+2)}.$
The inequalities $\frac{d\kappa}{d\eta}(\eta)<\kappa(\eta)<e^\eta$ for $\eta>0,$ follow trivially.

Clearly, $\kappa(\eta)\not =0$ for all $\eta\in \R\setminus \{0\}$ and as $\kappa(0)=1,$ it follows that $\kappa(\eta)>0$ for all $\eta\in \R.$

 Next,  as $\frac{d\kappa}{d\eta}(\eta)=\frac{\eta e^\eta-e^\eta+1}{\eta^2},$  any root $\eta\not =0$ of  $\frac{d\kappa}{d\eta}(\eta)=0$ verifies that $e^\eta=\frac{1}{1-\eta}.$  Comparing the series of these two functions, we see that $e^\eta<\frac{1}{1-\eta}$ for $\eta\in ]0,1[.$  The inequality $\frac{d\kappa}{d\eta}(\eta)>0$ is trivially verified when $\eta\geq 1.$ Finally, when $\eta<0,$ we put $\eta=-\delta,$ with $\delta\in \R^+$. The trivial inequality: $e^\delta>1+\delta,$ for $\delta\in \R^+$  implies that $e^\eta<\frac{1}{1-\eta}$ for $\eta<0.$ As $\frac{d\kappa}{d\eta}(0)=\frac{1}{2},$ we have that  $\frac{d\kappa}{d\eta}(\eta)>0$ for all $\eta\in \R.$

To finish,  since $\frac{d^2\kappa}{d\eta^2}(\eta)=\frac{(\eta^2-2\eta+2 )e^\eta-2}{\eta^3},$  any root $\eta\not =0$ of  $\frac{d^2\kappa}{d\eta^2}(\eta)=0$ verifies that $e^\eta=\frac{1}{1-\eta+\frac{1}{2}\eta^2}$, or equivalently $e^{-\eta}=1-\eta+\frac{1}{2}\eta^2$. Let $g(\eta)=e^{-\eta}-1+\eta-\frac{1}{2}\eta^2$. Let us show that $g(\eta)\neq0$ for $\eta\neq0$. Indeed,  $g'(\eta)= -e^{-\eta}+1-\eta<0$. The numerator of $\frac{d\kappa}{d\eta}(\eta)$ is $-e^{\eta}g'(\eta)$ and is positive for $\eta\neq0$.  Hence, $g'(\eta)<0$ for $\eta\neq0$, and since $g(0)=0$, then $\eta g(\eta)<0$ for $\eta\neq0$. As $\frac{d^2\kappa}{d\eta^2}(0)=\frac{1}{3},$ we have that  $\frac{d^2\kappa}{d\eta^2}(\eta)>0$ for all $\eta\in \R.$ \end{proof}
\vskip5pt

 The following lemma gives the relation of $\omega$ defined in \eqref{compensator} with $\kappa$, and interesting properties which can be easily deduced using this relation:
  
  \begin{lemma} \label{lemomega}
We have that 
 $\omega(\xi,\alpha)=-\kappa(-\alpha\ln \xi)\ln \xi.$ The compensator $\omega $ verifies the following estimates
\begin{enumerate}
\item

$ \omega(\xi,\alpha)\leq -\ln \xi$ if  $\alpha\leq 0$   and $\omega(\xi,\alpha)\leq  -\xi^{-\alpha}\ln \xi$  if  $\alpha\geq  0,$
and then
\begin{equation}\label{eq23}
 \omega(\xi,\alpha)=O(\xi^{-|\alpha|}|\ln \xi|).
\end{equation}
\item 
\begin{equation}\label{eq24}
\omega(\xi,\alpha)\rightarrow +\infty \   \  \mathrm{when}\  \  (\xi,\alpha)\rightarrow (0,0).
\end{equation}
\end{enumerate}
\end{lemma}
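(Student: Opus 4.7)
The plan is to derive all three assertions from the key identity $\omega(\xi,\alpha)=-\kappa(-\alpha\ln\xi)\ln\xi$, then exploit the positivity and monotonicity of $\kappa$ established in Lemma \ref{lemkappa}.

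First I would verify the identity by direct substitution. For $\alpha\neq 0$, writing $\xi^{-\alpha}=e^{-\alpha\ln\xi}$ and factoring $-\alpha\ln\xi$ out of the numerator gives
\[
\omega(\xi,\alpha)=\frac{e^{-\alpha\ln\xi}-1}{\alpha}=\frac{e^{-\alpha\ln\xi}-1}{-\alpha\ln\xi}\cdot(-\ln\xi)=-\kappa(-\alpha\ln\xi)\ln\xi,
\]
and the $\alpha=0$ case matches because $\kappa(0)=1$, giving $-\ln\xi$ on both sides.

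For the estimates in part (1), I would take $\xi\in(0,1)$ so that $-\ln\xi>0$ and split on the sign of $\alpha$. If $\alpha\leq 0$, then $-\alpha\ln\xi\leq 0$, and since $\kappa$ is increasing with $\kappa(0)=1$, we have $\kappa(-\alpha\ln\xi)\leq 1$, whence $\omega(\xi,\alpha)\leq -\ln\xi$. If $\alpha\geq 0$, then $-\alpha\ln\xi\geq 0$, and the bound $\kappa(\eta)<e^\eta$ from Lemma \ref{lemkappa} gives $\kappa(-\alpha\ln\xi)\leq e^{-\alpha\ln\xi}=\xi^{-\alpha}$, whence $\omega(\xi,\alpha)\leq -\xi^{-\alpha}\ln\xi$. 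Combining the two cases immediately yields $\omega(\xi,\alpha)=O(\xi^{-|\alpha|}|\ln\xi|)$.

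The only delicate point is part (2). Naively one hopes that $\omega=-\kappa(-\alpha\ln\xi)\ln\xi\to+\infty$ because $-\ln\xi\to+\infty$ and $\kappa>0$, but this is not automatic, since $\kappa(\eta)\to 0$ as $\eta\to-\infty$, so when $\alpha<0$ and $|\alpha\ln\xi|$ is large, the factor $\kappa(-\alpha\ln\xi)$ might in principle decay faster than $-\ln\xi$ grows. I would handle this by a case analysis on the behaviour of $-\alpha\ln\xi$ along an arbitrary sequence $(\xi_n,\alpha_n)\to(0,0)$. If $-\alpha_n\ln\xi_n$ stays bounded, continuity and strict positivity of $\kappa$ bound $\kappa(-\alpha_n\ln\xi_n)$ below by a positive constant, so $\omega_n\to+\infty$ with $-\ln\xi_n$. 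If $-\alpha_n\ln\xi_n\to+\infty$, both factors diverge. The residual case $-\alpha_n\ln\xi_n\to-\infty$ forces $\alpha_n<0$ and $|\alpha_n||\ln\xi_n|\to+\infty$; here I would abandon the $\kappa$-form and return to the original expression $\omega=\frac{1-\xi^{|\alpha|}}{|\alpha|}$, observing that $\xi_n^{|\alpha_n|}=e^{-|\alpha_n||\ln\xi_n|}\to 0$ while $|\alpha_n|\to 0$, so $\omega_n\sim 1/|\alpha_n|\to+\infty$. This residual case is the main obstacle, and it is resolved cleanly by switching back to the closed-form definition rather than trying to push the $\kappa$-representation further.
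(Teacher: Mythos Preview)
Your proof is correct and follows essentially the same route as the paper: the identity and part (1) match the paper's argument verbatim, and for part (2) you use the same dichotomy---handle the region where $-\alpha\ln\xi$ stays bounded below via monotonicity of $\kappa$, and in the dangerous case where $-\alpha\ln\xi\to-\infty$ revert to the closed form $\omega=(1-\xi^{|\alpha|})/|\alpha|$---only organized as a sequential argument rather than the paper's fixed-threshold split at a constant $K$. One small remark: your trichotomy on $-\alpha_n\ln\xi_n$ implicitly assumes this sequence has a limit in $[-\infty,+\infty]$, so strictly speaking you should pass to a subsequence first (arguing that if $\omega_n\not\to+\infty$ then some subsequence stays bounded, and then extract a further subsequence on which $-\alpha_n\ln\xi_n$ converges in the extended reals); this is routine and does not affect the argument.
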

\begin{proof}

  Using  properties of $\kappa$ given in Lemma \ref{lemkappa}, it follows that:
 \begin{enumerate}
\item If  $\alpha\geq 0,$ i.e $-\alpha \ln \xi \geq 0,$ then $\omega(\xi,\alpha)=-\kappa(-\alpha \ln \xi) \ln \xi$ is less than $-e^{-\alpha \ln \xi} \ln \xi =-\xi^{-\alpha}\ln \xi.$ 
\item If  $\alpha\leq 0,$ i.e $-\alpha \ln \xi \leq  0,$  then $\omega(\xi,\alpha)=-\kappa(-\alpha \ln \xi) \ln \xi \leq -\ln \xi$ (indeed, $\kappa$ is  increasing, $\kappa(0)= 1,$ yielding $\kappa(\eta)\leq 1$ when $\eta\leq 0).$
\end{enumerate}
The estimate (\ref{eq23}) follows from  these two inequalities.  In order to prove  (\ref{eq24}), we take any $K>0.$
\begin{enumerate}
\item If  $-\alpha\ln \xi \geq -K,$ we have that $\kappa(-\alpha\ln \xi)\geq \kappa(-K),$ as $\kappa$ is increasing, and then $\omega(\xi,\alpha)\geq -\kappa(-K)\ln \xi.$ 
\item If  $-\alpha\ln \xi \leq -K$  (in particular $\alpha\leq 0$), we have that 
$$\omega(\xi,\alpha)=\frac{1-e^{-\alpha\ln \xi}}{|\alpha|}\geq \frac{1-e^{-K}}{|\alpha|},$$
\end{enumerate}
from which (\ref{eq24}) follows. \end{proof}
\vskip5pt

In parallel with the compensator $\Omega$ introduced in \eqref{Omega}, we introduce the symmetric function 
\begin{equation}{\cal K}(\eta,\delta)=\begin{cases} \frac{\kappa(\eta)-\kappa(\delta)}{\eta-\delta}, &\eta\not =\delta,\\\frac{d\kappa}{d\eta}(\eta),&\eta=\delta. \end{cases}\label{def:K}\end{equation}  This yields
$$\Omega(\xi,\alpha,\beta)={\cal K}(-\alpha\ln \xi,-\beta\ln \xi)\ln^2\xi.$$
The useful properties of $\Omega(\xi,\alpha,\beta)$ are given by the following lemma:
\vskip5pt

\begin{lemma}\label{lemOmega} 
 $\Omega_{\alpha,\beta}(\xi)=O(\xi^{-\gamma}\ln^2 \xi), $
where $\gamma=\mathrm{max}\{|\alpha|,|\beta|\}$, and   $\Omega_{\alpha,\beta}(\xi)\rightarrow +\infty, $
when $(\xi,\alpha,\beta)\rightarrow (0,0,0).$ 
\end{lemma}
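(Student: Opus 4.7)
The plan is to combine the identity $\Omega(\xi,\alpha,\beta)={\cal K}(-\alpha\ln\xi,-\beta\ln\xi)\ln^2\xi$ from the paragraph preceding the lemma with the mean value theorem applied to $\kappa$: for $\eta\neq\delta$, ${\cal K}(\eta,\delta)=\kappa'(c)$ for some $c$ between $\eta$ and $\delta$, and at $\eta=\delta$ the same formula holds with $c=\eta$ by the definition of ${\cal K}$. Writing $\eta=-\alpha\ln\xi$ and $\delta=-\beta\ln\xi$, Lemma \ref{lemkappa} ensures that $\kappa'$ is positive and strictly increasing on $\R$, so $\Omega>0$ and both claims of the lemma reduce to estimates on $\kappa'$ on the interval between $\eta$ and $\delta$.

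For the upper bound, I would set $m=\max(\eta,\delta)$; monotonicity gives $\kappa'(c)\leq\kappa'(m)$. The bounds $\kappa'(\eta)<e^\eta$ for $\eta\geq 0$ and $\kappa'(\eta)\leq\kappa'(0)=\tfrac12$ for $\eta\leq 0$ from Lemma \ref{lemkappa} together yield $\kappa'(m)\leq e^{|m|}$. Using $|m|\leq\max(|\eta|,|\delta|)=\gamma|\ln\xi|$ and $\xi<1$, so that $|\ln\xi|=-\ln\xi$, we get $e^{|m|}\leq\xi^{-\gamma}$, and multiplying by $\ln^2\xi$ gives $\Omega(\xi,\alpha,\beta)=O(\xi^{-\gamma}\ln^2\xi)$.

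For the divergence statement, take any sequence $(\xi_n,\alpha_n,\beta_n)\to(0,0,0)$ and let $c_n$ be the associated intermediate value, $\gamma_n=\max(|\alpha_n|,|\beta_n|)$; the crucial link between the three variables is $|c_n|\leq\gamma_n|\ln\xi_n|$. If $(c_n)$ is bounded below, then $\kappa'(c_n)\geq\kappa'(\inf_n c_n)>0$ by monotonicity, and since $\xi_n\to 0$ forces $\ln^2\xi_n\to+\infty$, we conclude $\Omega_n\to+\infty$. Otherwise, passing to a subsequence, $c_n\to-\infty$; from the explicit formula $\kappa'(\eta)=((\eta-1)e^\eta+1)/\eta^2$ one verifies that $\kappa'(c)\geq 1/(2c^2)$ once $c\leq -C_0$ for some $C_0$ large enough. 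Then $\Omega_n\geq \ln^2\xi_n/(2c_n^2)\geq 1/(2\gamma_n^2)\to+\infty$.

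The hard part is precisely this last regime $c_n\to-\infty$, where $\kappa'(c_n)\to 0$ and divergence of $\Omega$ is not obvious from $\kappa'$ alone. The key trick is the joint constraint $|c_n|\leq\gamma_n|\ln\xi_n|$, which lets one trade the quadratic decay of $\kappa'$ at $-\infty$ against the growth of $\ln^2\xi_n$, producing the explicit lower bound $1/(2\gamma_n^2)$ that blows up as $\gamma_n\to 0$.
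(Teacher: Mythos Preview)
Your proof is correct and follows essentially the same approach as the paper: both arguments rest on the identity $\Omega=\mathcal{K}(-\alpha\ln\xi,-\beta\ln\xi)\ln^2\xi$, the mean value theorem for $\mathcal{K}$, and the monotonicity and explicit formula for $\kappa'$ from Lemma~\ref{lemkappa}. The only organizational difference is in the divergence claim: the paper splits on the sign of $\beta$ (after assuming $\alpha\geq\beta$) and writes down the explicit lower bound $\frac{(|\beta|\ln\xi-1)\xi^{|\beta|}+1}{|\beta|^2}$, whereas you split on whether the intermediate point $c_n$ stays bounded below and use the asymptotic $\kappa'(c)\sim 1/c^2$ as $c\to-\infty$; your route is arguably cleaner, since the paper's final assertion that its explicit lower bound tends to $+\infty$ is left unjustified and in fact requires exactly the kind of case analysis you carry out.
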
 
\begin{proof} To prove the two claims, we just have to use the Mean Value Theorem for the function ${\cal K}$:  there exists $\theta\in [\eta,\delta],$  such that ${\cal K}(\eta,\delta)=\frac{d\kappa}{d\eta}(\theta).$

Let us begin by the first claim.  Let us start with the case  $\alpha\geq \beta$.  Then ${\cal K}(-\alpha\ln \xi,-\beta\ln \xi)= \frac{d\kappa}{d\eta}(\theta),$
for some $\theta\in [-\beta\ln \xi,-\alpha\ln \xi].$ As $\frac{d\kappa}{d\eta}(\eta)$ is an increasing function (see Lemma \ref{lemkappa}), we have that 
${\cal K}(-\alpha\ln \xi,-\beta\ln \xi)\leq \frac{d\kappa}{d\eta}(-\alpha\ln \xi).$
If $\alpha\leq 0,$ we use that $\frac{d\kappa}{d\eta}(-\alpha\ln \xi)\leq \frac{d\kappa}{d\eta}(0)=\frac{1}{2}$ to obtain that $\Omega_{\alpha,\beta}(\xi)\leq \frac{1}{2}\ln^2\xi.$ If $\alpha\geq 0,$ again using Lemma \ref{lemkappa}, we have that  $\frac{d\kappa}{d\eta}(-\alpha\ln \xi)\leq e^{-\alpha\ln \xi}=\xi^{-\alpha}$,  and then that:  $\Omega_{\alpha,\beta}(\xi)\leq \xi^{-\alpha}\ln^2\xi.$ We can summarize the two possibilities by writing that $\Omega_{\alpha,\beta}(\xi)\leq\xi^{-|\alpha|}\ln^2\xi,$ as soon as $\alpha\geq \beta$  and $\xi$ and $|\alpha|$ sufficiently small. 
Using the symmetry of $\Omega_{\alpha,\beta}(\xi)$ we can permute $\alpha $ and $\beta$ in the above argument to obtain finally  that  $\Omega_{\alpha,\beta}(\xi)=O(\xi^{-\gamma}\ln^2 \xi), $
where $\gamma=\mathrm{max}\{|\alpha|,|\beta|\}.$
\vskip5pt
We now prove  the second claim.  By symmetry on $\alpha$ and $\beta$ it suffices to prove the claim for $\alpha\geq \beta.$ As above, we can write that 
$\Omega_{\alpha,\beta}(\xi)=\frac{d\kappa}{d\eta}(\theta)\ln^2\xi,$ for some $\theta\in  [-\beta\ln \xi,-\alpha\ln \xi].$ Now, we want to bound $\Omega_{\alpha,\beta}$ from below. 
Since $\frac{d\kappa}{d\eta}$ is increasing, $\Omega_{\alpha,\beta}(\xi)\geq \frac{d\kappa}{d\eta}(-\beta\ln\xi)\ln^2\xi.$  If $\beta\geq 0,$ we just use that  $\frac{d\kappa}{d\eta}(-\beta\ln\xi)\geq \frac{d\kappa}{d\eta}(0)=\frac{1}{2},$ to obtain that  $\Omega_{\alpha,\beta}(\xi)\geq \frac{1}{2}\ln^2\xi.$ If $\beta\leq 0,$ we have to compute $\frac{d\kappa}{d\eta}(-\beta\ln\xi)=\frac{d\kappa}{d\eta}(|\beta|\ln\xi)=\frac{d\kappa}{d\eta}(\ln\xi^{|\beta|}).$  As $\frac{d\kappa}{d\eta}(\eta)=\frac{(\eta-1) e^\eta+1}{\eta^2},$ we have that 
$\frac{d\kappa}{d\eta}(-\beta\ln\xi)
=\frac{(|\beta|\ln \xi-1)\xi^{|\beta|} +1}{|\beta|^2\ln^2\xi} $ and then: $\Omega_{\alpha,\beta}(\xi)\geq \frac{(|\beta|\ln \xi-1)\xi^{|\beta|} +1}{|\beta|^2},$ yielding that  $\Omega_{\alpha,\beta}(\xi)\rightarrow +\infty.$
This yields the conclusion. \end{proof}

\subsection{\bf Transition along the trajectories }

\vskip5pt
{\it We want to study transition maps for (\ref{eq1}), in the region $Q=\{u\geq 0,v\geq 0,\}\subset \R^{3}$ near the origin.} More precisely, let $W$ be a neighborhood  of the origin in $\R^3$,  and $\Pi\subset \{u=u_0\},$ for $u_0>0 $, be a section.   The neighborhood $W$ can be chosen sufficiently small so that the trajectory starting at any point in $W\cap \{u>0\}$ reaches $\Pi$ for a finite positive time (in particular, $W\cap \Pi=\emptyset).$  We consider the transition $T_{\mu,\sigma}$ from the points in $W\cap \{u>0\}$ to the section $\Pi.$ 

We will compute $T_{\mu,\sigma},$ in  the ${\cal C}^k$-coordinates given by  Theorem \ref{thnormalformhyp}.  In this system of coordinates the  family is the smooth family of polynomial vector fields $X^N_{\mu,\sigma}$ (this means polynomial in $(u,v,y)$ with smooth coefficients in $(\mu,\sigma)).$

   We take $\Pi=[-Y_0,Y_0]\times [0,v_0]\times \{u_0\}$  for some $Y_0>0,v_0>0.$  On $\Pi$,  we replace the  coordinate $v$  by $\nu=u_0v,$ with $\nu\in [0,\nu_0=u_0v_0].$ Then, we can write  $T_{\mu,\sigma}(u,v,Y)=(\widetilde Y_{\mu,\sigma}(u,v,Y),\nu=uv).$

           The expression of the $Y$-component $\widetilde Y_{\mu,\sigma}$ is given by the following Theorem: 

\begin{theorem}\label{thtransgeneralhypsaddle}

Let $\bar\sigma=\bar \sigma(\sigma,\nu)=\sigma+\varphi_{\mu,\sigma}(\nu)$ and $\alpha=\alpha(\sigma,\nu)=\bar\sigma(\sigma,\nu)-\sigma_0$, where  $\varphi_{\mu,\sigma}$  is the polynomial family  introduced in Theorem \ref{thnormalformhyp}. 
  The $Y$-component of the  transition map $T_{\mu,\sigma}$ has the following expression on $W\cap\{u>0\}$:
\begin{enumerate}
\item If $\sigma_0\not\in \Q: $
\begin{equation}\label{eq4}
\widetilde Y_{\mu,\sigma}(u,v,Y)=\Big(\frac{u}{u_0}\Big)^{\bar\sigma} Y.
\end{equation}
\item    If $\sigma_0=\frac{p}{q}\in \Q$ with $(p,q)=1,$  when $\sigma_0\not \in \N:$

\begin{equation}\label{eq5}
\widetilde Y_{\mu,\sigma}(u,v,Y)=\eta_{\mu,\sigma}(\nu)v^p\Big(\frac{u}{u_0}\Big)^{\bar\sigma}\omega\Big(\frac{u}{u_0},\alpha\Big)+\Big(\frac{u}{u_0}\Big)^{\bar\sigma}\Big(Y+\phi_{\mu,\sigma}(Y,u,v)\Big),
\end{equation}
where $\eta_{\mu,\sigma}$  is the same as  in (\ref{eq3}) (in particular, $\eta_{\mu,\sigma}\equiv 0$ when $\sigma_0\not\in \N).$
\end{enumerate}

The function family $\phi_{\mu,\sigma}$ in   (\ref{eq5}) is of order $O\left(u^{p+q\alpha}\omega^{q+1}\Big(\frac{u}{u_0},\alpha\Big)|\ln u|\right)$ and, for any integer $l\geq 2,$   is of class ${\cal C}^{l-2}$   in $(Y,u^{1/l},u^{1/l}\omega\Big(\frac{u}{u_0},\alpha\Big),v,\mu,\sigma)$. 
\end{theorem}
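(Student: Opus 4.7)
The plan is to integrate the $Y$-equation of the normal form along each trajectory from its initial point $(u,v,Y)$ to its image on $\Pi=\{u=u_0\}$. Because $\dot u = u$ and $\nu = uv$ is a first integral of $X^N_{\mu,\sigma}$, the transit time is exactly $t_\ast = \ln(u_0/u)$, so the whole computation reduces to solving the third equation of \eqref{eq2} or \eqref{eq3} on $[0,t_\ast]$ with $\nu$ frozen, and then reading off how $\omega$ enters from the explicit formulas.

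In the irrational case the $Y$-equation is simply $\dot Y = -\bar\sigma(\sigma,\nu)Y$, giving at once $Y(t_\ast)=(u/u_0)^{\bar\sigma}Y$ and hence \eqref{eq4}. For $\sigma_0 = p/q$ rational I introduce the resonant monomial $Z = u^p Y^q$; using $\dot u = u$, $u^p v^p = \nu^p$ and the identity $p - q\sigma_0 = 0$, a direct differentiation yields the scalar ODE
\begin{equation*}
\dot Z \;=\; -q\alpha\, Z \;+\; q\,\Phi_{\mu,\sigma}(\nu,Z)\,Z \;+\; q\,\nu^p\,\eta_{\mu,\sigma}(\nu)\,Y^{q-1},
\end{equation*}
in which the last term is effectively present only in the integer subcase $q=1$, $\sigma_0=p$. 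I then apply variation of constants around the linear part $-q\alpha Z$. The unperturbed part contributes the factor $(u/u_0)^{\bar\sigma}$ multiplying $Y$; the $\eta$-source term, combined with the identity $(1-(u/u_0)^{q\alpha})/(q\alpha) = (u/u_0)^{q\alpha}\,\omega(u/u_0,q\alpha)$ and the simplification $\nu^p u_0^{-p}(u/u_0)^\alpha = v^p(u/u_0)^{\bar\sigma}$, produces exactly the explicit compensator term of \eqref{eq5}. The remainder $\phi_{\mu,\sigma}$ collects the contribution of $\Phi$: since $\Phi(\nu,0)\equiv 0$ by Theorem \ref{thnormalformhyp}, I factor $\Phi(\nu,Z) = Z\,\Psi(\nu,Z)$ with $\Psi$ polynomial, so that the residual integral is quadratic in $Z$, and a contraction argument in a Banach space of functions bounded by $O(u^{p+q\alpha}\omega^{q+1}(u/u_0,\alpha)|\ln u|)$, using the sharp bounds on $\omega$ provided by Lemma \ref{lemomega}, gives existence of $\phi$ together with the claimed order of magnitude.

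The main obstacle is the $\mathcal{C}^{l-2}$ regularity claim in the mixed variables $(Y,\,u^{1/l},\,u^{1/l}\omega(u/u_0,\alpha),\,v,\mu,\sigma)$ for every integer $l\geq 2$. I would handle this by running the same fixed-point argument in a larger Banach space of functions that are $\mathcal{C}^{l-2}$ in those variables, equipped with the natural weighted norm. Differentiating the kernel $e^{-q\alpha(t_\ast-s)}$ in $u$ either produces a factor $u^{-1}$, which is compensated by the $u^{1/l}$ already present in admissible test functions, or an extra logarithm, which is absorbed into a further $\omega$-compensator; the required uniform control as $\alpha = \bar\sigma-\sigma_0\to 0$ follows from the bounds on $\omega$ and its derivatives contained in Lemma \ref{lemomega}, treating $\alpha$ throughout as an additional small parameter allowed to vanish.
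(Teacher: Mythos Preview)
Your derivation of the explicit formula is sound in spirit and reaches \eqref{eq5} correctly, though your substitution $Z=u^pY^q$ is not the one the paper uses. The paper sets $Y(t)=e^{-\bar\sigma t}Z(t)$ (a bijection for every $q$), then subtracts the particular solution $v^p\eta_{\mu,\sigma}(\nu)\Theta(t,\alpha)$ produced by the $\eta$-source, and finally changes time from $t$ to $\Theta$. This yields the compensator term of \eqref{eq5} directly and leaves a residual equation $\frac{d\bar Z}{d\Theta}=u^p\bar H$ for the piece that becomes $\phi_{\mu,\sigma}$. Your variation-of-constants computation is essentially an integral reformulation of the same thing; the only cosmetic drawback of your $Z=u^pY^q$ is the need to pass back to $Y$ via $(1+R)^{1/q}$, which is avoidable with the paper's linear substitution.

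The substantive gap is the $\mathcal{C}^{l-2}$ regularity in $(Y,u^{1/l},u^{1/l}\omega(\frac{u}{u_0},\alpha),v,\mu,\sigma)$. Your plan---run the contraction in a weighted $\mathcal{C}^{l-2}$ Banach space and absorb the $u^{-1}$'s and logarithms produced by differentiating the kernel---is precisely the kind of direct variational estimate that the authors say they are \emph{avoiding} (see their discussion following Theorems~\ref{thtranstypeI}--\ref{thtranstypeII}, item~(1)); it can be made to work but amounts to redoing the long computations of \cite{ZR}. The paper takes a different and much shorter route: set $U=u^{1/l}$ and change time again to $\tau=U\Theta$. The residual equation becomes $\frac{d\bar Z}{d\tau}=U^{pl-1}\bar H(\tau/U,\ldots)$, and a short lemma (Lemma~\ref{lembarD}) shows that every partial derivative of order $\le l-2$ of this right-hand side is $O(U^{pl-|m|-1-\delta})$ on the relevant closed domain $\bar{\mathcal D}$, hence extends continuously by $0$ along $\{\tau=U=0\}$. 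A Whitney extension then gives a genuinely $\mathcal{C}^{l-2}$ vector field on a full neighbourhood, and the regularity of $\phi_{\mu,\sigma}$ follows \emph{for free} from the Cauchy theorem for $\mathcal{C}^{l-2}$ ODEs, evaluated at $\tau=U\omega(U^l/u_0,\alpha)$. In short, the paper converts the regularity question into ``the flow of a $\mathcal{C}^{l-2}$ equation is $\mathcal{C}^{l-2}$'', whereas your sketch leaves all the hard uniform estimates (simultaneous in $u\to 0$ and $\alpha\to 0$) unspecified.
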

\vskip5pt\noindent{\it Proof.}
The time to go from a point $(u,v,Y)\in W\cap\{u>0\}$ to the section $\Pi$ along  the flow of $X^N_{\mu,\sigma}$ is equal to $-\ln\frac{u}{u_0}.$  Expression (\ref{eq4}) follows trivially from the integration of the third line of the system (\ref{eq2}).

    Then, from now on, we will assume that $\sigma_0\in \Q$  and we will study the integration of the system (\ref{eq3}). The trajectory through the point $(u,v,Y)$ is equal to $(ue^t,ve^{-t},Y(t))$ where $Y(t)$ is solution of the $1$-dimensional non-autonomous differential equation:
\begin{equation}\label{eq6}
 {\dot Y}(t) =-\bar \sigma Y(t)+\Phi_{\mu,\sigma}(\nu,u^pe^{pt}Y(t)^q)Y(t)+e^{-pt}v^p\eta_{\mu,\sigma}(\nu),
\end{equation}
with initial condition $Y(0)=Y.$
\vskip5pt

 In order to eliminate the linear term in (\ref{eq6}) we look for $Y(t)$ in the form $Y(t)=e^{-\bar\sigma t}Z(t).$ As $\dot Y(t)=e^{-\bar\sigma t}\dot Z(t)-\bar\sigma Y(t),$ and letting $\bar \sigma=\frac{p}{q}+\alpha$, we obtain the following  differential equation for $Z(t):$
\begin{equation}\label{eq7}
\dot Z=\Phi_{\mu,\sigma}(\nu,e^{-q\alpha t}u^pZ^q)Z+ e^{\alpha t}v^p\eta_{\mu,\sigma}(\nu),
\end{equation}
with initial condition $Z(0)=Y.$ Note that the term in $\eta_{\mu,\sigma}$ is only present when $q=1.$ 

The $1$-dimensional non-autonomous differential equation (\ref{eq7})  is smooth in $(t,Z,\sigma,\nu,u,v,\mu)$ and can be integrated for any time $t\in [0,-\ln\frac{u}{u_0}].$  If $Z(t)$ is the solution of (\ref{eq7}) with initial condition $Z(0)=Y,$ we will have that 
\begin{equation}\label{eq8}
\widetilde Y_{\mu,\sigma}(u,v,Y)=\Big(\frac{u}{u_0}\Big)^{\bar\sigma}Z\Big(-\ln\frac{u}{u_0}\Big).
\end{equation}

The above expression has to be studied for $u>0$ (we extend $\widetilde Y$ along $\{u=0\}$ by $\widetilde Y_{\mu,\sigma}(0,v,Y)=0).$
We  first  study the integration of (\ref{eq7}). 

To begin, it is easy to get rid of the term $e^{\alpha t}v^p\eta_{\mu,\sigma}(\nu)$ in (\ref{eq7}). Let us consider  the analytic function

$$
  \Theta(t,\alpha)=\begin{cases}\frac{e^{\alpha t}-1}{\alpha}, &  \alpha\not =0,\\
  t,&\alpha=0.\end{cases}
 $$
  which verifies $\dot \Theta=e^{\alpha t}.$  We have that  $\Theta(t,\alpha)=t\kappa(\alpha t)$  and then
 $ \omega(\xi,\alpha)=\Theta(-\ln \xi,\alpha).$
  
  Putting $Z(t)=v^p\eta_{\mu,\sigma}(\nu)\Theta(t,\alpha)+\bar Z(t),$ we see that $\bar Z(t)$  is the solution of the differential equation
\begin{equation}\label{eq9}
\dot{\bar Z}=\Phi_{\mu,\sigma}\Big(\nu,u^pe^{-q\alpha t}(v^p\eta_{\mu,\sigma}(\nu)\Theta(t,\alpha)+\bar Z)^q\Big)(v^p\eta_{\mu,\sigma}(\nu)\Theta(t,\alpha)+\bar Z),
\end{equation}
with initial condition $\bar Z(0)=Y.$  
\vskip5pt
 
As $\Phi_{\mu,\sigma}(\nu,0)\equiv 0,$ we can write 
$\Phi_{\mu,\sigma}(\nu,\xi)=\xi H_{\mu,\sigma}(\nu,\xi),$
where $H_{\mu,\sigma}$ is a smooth function. Now, let us notice that $e^{\alpha t}=\dot  \Theta= 1+\alpha\Theta.$ Moreover the map $t\rightarrow \Theta(t,\alpha)$ is invertible (for any $\alpha$).  Then, we can change the time $t$ by the time $\Theta$ in the differential equation (\ref{eq9}). We obtain the new equation
\begin{equation}\label{eq10}
\frac{d\bar Z}{d\Theta}=u^p\bar H(\Theta,\bar Z,u,v,\nu,\alpha,\mu,\sigma)
\end{equation}
with 
\begin{equation}\label{eq11}
\bar H=(1+\alpha \Theta)^{-(1+q)}(v^p\eta\Theta+\bar Z)^{q+1}H_{\mu,\sigma}\Big(\nu,u^p(1+\alpha \Theta)^{-q}(v^p\eta\Theta+\bar Z)^q\Big),
\end{equation}
where $\eta=\eta_{\mu,\sigma}(\nu).$ 
Let  $\Psi\left(\Theta,Y,u,v,\nu,\alpha,\mu,\sigma\right)$ be the solution of (\ref{eq10}), with the ``time'' $\Theta.$  {\it Up to now, $\Theta$ is seen as an independent variable;  in particular it is independent from $\alpha$}. For $t=-\ln\frac{u}{u_0},$  then $\Theta=\omega_\alpha(\frac{u}{u_0}),$ yielding
  
 \begin{equation}\label{eq12}
 Z\Big(-\ln\frac{u}{u_0}\Big)=\Psi\left(\omega\Big(\frac{u}{u_0},\alpha\Big),Y,u,v,\nu,\alpha,\mu,\sigma\right)+v^p\eta_{\mu,\sigma}
(\nu)\omega\Big(\frac{u}{u_0},\alpha\Big),
 \end{equation}
 and then, the computation of $\widetilde Y_{\mu,\sigma}(u,v,Y)$ reduces to the computation of  $\Psi\left(\omega\Big(\frac{u}{u_0},\alpha\Big),Y,u,v,\mu,\sigma\right).$

One  difficulty in the study of $\Psi\left(\omega\Big(\frac{u}{u_0},\alpha\Big),Y,u,v,\nu,\alpha,\mu,\sigma\right)$ is that $\omega\Big(\frac{u}{u_0},\alpha\Big)\rightarrow+\infty$ if $u\rightarrow 0.$ To overcome this difficulty  we will exploit the fact that the right hand side of (\ref{eq10})  is divisible   by $u^p.$

  We first study the differential equation (\ref{eq10}). We put  $u=U^l$ and change the time $\Theta$ by the time $\tau=U\Theta$  (and not just by $u\Theta,$ as it could seem more natural).  The equation  (\ref{eq10}) is replaced by the following equation

\begin{equation}\label{eq13}
\frac{d\bar Z}{d\tau}=U^{pl-1}\bar H\Big(\frac{\tau}{U},\bar Z,U^p,v,\nu,\alpha,\mu,\sigma\Big),
\end{equation}
where $\bar H$ is given by (\ref{eq11}). Let $\bar G$ be the right hand side of (\ref{eq13}). It is smooth for $U>0,$ but since it is function of $\alpha\frac{\tau}{U},$ it is not well-defined in a whole neighborhood 
of the point $\{(\tau,\bar Z,U,v,\nu,\alpha,\mu,\sigma)=(0,0,0,0,0,0,\mu_0,\sigma_0)\}.$  Fortunately,  we only need to integrate (\ref{eq13}) in a closed domain $\ov{\cal D}$:
\smallskip

\noindent{\bf Definition of  $\ov{\cal D}$.} The domain  $\ov{\cal D}$ is defined in the space $(\tau,U,\bar Z,v,\nu,\alpha,\mu,\sigma)$  defined by
\begin{enumerate}
\item $U\in [0,U_1],$ $|\alpha|\leq \alpha_0$ and $\tau\in [0,U\omega (\frac{U^l}{u_0},\alpha)]$, where $U_1,\alpha_0>0$ are  chosen arbitrarily small (the time  $\tau=U\omega( \frac{U^l}{u_0},\alpha)$ corresponds to the time $t=-\ln \frac{u}{u_0}=-l\ln \frac{U}{u_0})$,
\item $(\bar Z,v,\nu,\alpha,\mu,\sigma)\in \A,$ an arbitrarily small closed neighborhood of the value  $(0,0,0,0,\mu_0,\sigma_0).$
\end{enumerate}
We want to prove that $\bar G$ is  of class ${\cal C}^{l-2}$ on $\ov{\cal{D}}.$ We will first prove a technical lemma about the partial derivatives of the function $\bar G.$ 
Let us denote by $\partial_m \bar G$ any  partial derivative of $\bar G$ corresponding to a multi-index $m=(m_1,\ldots,m_s)$ associated to  the variables $\tau,U,\bar Z,v,\nu,\alpha,\mu,\sigma$ and the coordinates of $\mu.$ Let $|m|=m_1+\cdots +m_s$ be the degree of $m.$ We will note by $\delta,$ a strictly positive number, which can be made arbitrarily small by appropriately choosing $U_1$ and $\A.$  We have the following:

\begin{lemma}\label{lembarD}
Let be $\sigma_0=\frac{p}{q}$ as  above. 
Let $m$ be any multi-index such that $|m|\leq l-2.$ Then, for any $\delta>0$, there exists a domain $\ov{\cal{D}}$ as above, such that \emph{on the restriction to  the domain $\ov{\cal{D}}$} we have that
\begin{equation}\label{eq13_bis}
\partial_m \bar G =O(U^{pl-|m|-1-\delta}).
\end{equation}
\end{lemma}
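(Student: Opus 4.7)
The plan is to exploit the prefactor $U^{pl-1}$ in $\bar G = U^{pl-1}\bar H(\tau/U, \bar Z, U^p, v, \nu, \alpha, \mu, \sigma)$ and to show that each differentiation costs at most one factor of $U^{-1}$, up to an arbitrarily small extra loss coming from the potentially large quantity $\Theta = \tau/U$.

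First I would control $\Theta$ on $\ov{\cal D}$. By definition, $\tau \leq U\,\omega(U^l/u_0, \alpha)$, so estimate (\ref{eq23}) of Lemma \ref{lemomega} gives $\Theta \leq \omega(U^l/u_0, \alpha) = O(U^{-l|\alpha|}|\ln U|)$. Choosing $\A$ with $|\alpha| \leq \alpha_0$ small enough and $U_1$ small, we obtain $\Theta = O(U^{-\delta'})$ for any prescribed $\delta' > 0$. In particular $1 + \alpha \Theta = (U^l/u_0)^{-\alpha}$ stays bounded above and below by powers $U^{\pm l|\alpha|}$; the third argument of $H_{\mu,\sigma}$ is $O(U^{pl-\delta'})$, which remains well inside the smoothness domain of $H_{\mu,\sigma}$ and is uniformly small on $\ov{\cal D}$. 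Thus every factor of $\bar H$ in (\ref{eq11}) is smooth, and a straightforward induction shows that every partial derivative of $\bar H(\Theta, \bar Z, U^p, v, \nu, \alpha, \mu, \sigma)$ with respect to its arguments (treating $\Theta$ as independent) is again $O(U^{-\delta''})$ with $\delta''$ arbitrarily small.

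I would then compute $\partial_m \bar G$ by Leibniz and the chain rule. A $\partial_\tau$ contributes the factor $1/U$ from $\partial_\tau(\tau/U)$ times a bounded derivative of $\bar H$; a $\partial_U$ hitting the first argument of $\bar H$ produces $-\tau/U^2 = -\Theta/U = O(U^{-1-\delta'})$; a $\partial_U$ hitting the prefactor $U^{pl-1}$ lowers its exponent by one; a $\partial_U$ hitting the third argument $U^p$ produces $p\,U^{p-1}$; and derivatives in the remaining variables only touch the smooth, uniformly bounded factors of $\bar H$. After $|m|$ differentiations the worst accumulation yields
$$\partial_m \bar G = O\bigl(U^{pl-1-|m|}\cdot U^{-|m|\delta'}\bigr) = O\bigl(U^{pl-1-|m|-\delta}\bigr),$$
with $\delta = |m|\delta' \leq (l-2)\delta'$ as small as desired.

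The main obstacle is precisely this pile-up of $\Theta$-factors under repeated differentiation: each can a priori grow like $U^{-l|\alpha|}|\ln U|$, and up to $|m|$ of them can appear. The hypothesis $|m| \leq l-2$, combined with the freedom to shrink $\A$ around $\sigma_0$, allows us to make the cumulative loss $(l-2)\cdot l\alpha_0$ smaller than any preassigned $\delta$, which closes the estimate uniformly on $\ov{\cal D}$.
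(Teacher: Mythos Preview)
Your proposal is correct and follows essentially the same approach as the paper: both exploit the prefactor $U^{pl-1}$, bound $\Theta=\tau/U$ by $O(U^{-\delta'})$ on $\ov{\cal D}$ via Lemma~\ref{lemomega}, and then argue that each partial derivative lowers the $U$-order by one unit modulo an $O(U^{-\delta'})$ loss. One small inaccuracy: your identity $1+\alpha\Theta=(U^l/u_0)^{-\alpha}$ holds only at the endpoint $\tau=U\omega(U^l/u_0,\alpha)$, not throughout $\ov{\cal D}$; what you need (and what the paper uses in (\ref{eq16})) is the two-sided bound $(1+\alpha\Theta)^s=O(U^{-|sl\alpha|})$, which does hold for all $\tau$ in the allowed range since $\Theta\in[0,\omega(U^l/u_0,\alpha)]$.
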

\begin{proof}
Recall that $\bar G=U^{pl}\bar H,$ where $\bar H$ is given by (\ref{eq11}) and $\Theta$ is replaced by $\frac{\tau}{U}.$ 
The proof is straightforward, but rather tedious, and we just give the main steps. First, let us notice that on $\ov{\cal{D}}$ we have that, for any $s\in \Z$: 
\begin{equation}\label{eq16}
\Big(1+\alpha \frac{\tau}{U}\Big)^s=(1+\alpha \Theta)^s=e^{s\alpha t}= O(U^{-|sl\alpha|}).
\end{equation}
Also, using Lemma \ref{lemomega}, we have that:
$$ \frac{\tau}{U}=\Theta=\kappa(\alpha t)t \leq 
e^{|\alpha|t}t\leq  l U^{-|l\alpha|}|\ln U|.$$
 These estimations imply that $\Big(1+\alpha \frac{\tau}{U}\Big)^{-(q+1)}$  and $\frac{\tau}{U}$ have an order $O(U^{-\delta}).$ As  $\bar H$ is bounded on  $\bar {\cal D},$  we have that
$\bar G=O(U^{pl-1-\delta}).$ This  is the expected result for $m=0.$

Next, we use the expression of the partial derivatives of $\bar G,$ in terms of  the functions  $\Theta,$  $(1+\alpha\Theta)^{-q}$ or  $(1+\alpha\Theta)^{-(q+1)}$ and the partial derivatives of $H_{\mu,\sigma},$  evaluated on $\ov{\cal D}$ (these partial derivatives are bounded on  $\ov{\cal D}).$ We have for instance that:
$$\frac{\partial }{\partial U}(1+\alpha\Theta)^{-q}=-ql\alpha (1+\alpha\Theta)^{-(q)}\frac{1}{U}=O(U^{-1-\delta}).$$

 As $(1+\alpha\Theta)^{-q}=O(U^{-\delta}),$ we remark that the order in $U$ has  discreased by one unit (modulo an order in $\delta).$

{\it It is easy to see that this observation can be generalized for any   partial derivative: the previous order in $U$ decreases by one unity for each first order partial derivation (modulo an order in $\delta).$}

Then, starting with  $\bar G=O(U^{pl-1-\delta})$ for $m=0,$ the estimation (\ref{eq13}) for any multi-index $m$ follows directly by recurence from this fall of order (let us notice that, in a symbolic way, we have: $``\delta+\delta=\delta").$ \end{proof}

\vskip5pt\noindent{\bf End of the proof of Theorem \ref{thtransgeneralhypsaddle} }

\vskip5pt Lemma \ref{lembarD} says that each partial derivative $\partial_m\bar G$ can be extended continuously on $\tau=U=0$ by giving it the value zero at these points.  Then, 
 as the function $\bar G$ is smooth on $\ov{\cal D}\setminus \{\tau=U=0\},$  the restriction of $\bar G$  to $\ov{\cal{D}}$ is a function of differentiability class ${\cal C}^{l-2},$ on the whole domain $\ov{\cal{D}},$  {\it including  the points on $\{\tau=U=0\},$}  when we give to each partial derivative of  $\bar G$ or order less than $l-2$ the value $0$ at these points.  Let  $\B$ be a closed neighborhood of $(0,0,0)$ in the $(\tau,\alpha,U)$-plane, containing the closed  set $$\{(\tau,\alpha,U)\  | \   \tau\in [0,-lU\ln\frac{U}{U_0}], \ |\alpha |\leq \alpha_0,\   U\in [0,U_1]\}$$ 
 that we have introduced  above in the definition of $\ov{\cal{D}}$. The closed domain   $\ov{\cal{D}}$ is contained in the neighborhood $\A\times \B.$  Using the Whitney Theorem for the extention of differentiable functions (see [M] for instance),  we can find a ${\cal C}^{l-2}$-function $\widetilde G$ on a $\A\times \B$ such that $\widetilde G|_{\ov{\cal{D}}}\equiv \bar G$ (here, this extention can also be easily constructed by hand, in an elementary way).

For  times $\tau \in [0,-lU\ln\frac{U}{U_0}]$   the flow $\Psi(\tau,\bar Z,U,v,\nu,\alpha,\mu,\sigma)$ of the differential equation (\ref{eq10}): $\frac{d\bar Z}{d\tau}=\bar G$ coincides with the flow $\widetilde \Psi(\tau,\bar Z,U,v,\nu,\alpha,\mu,\sigma)$ of the differential equation 
 $\frac{d\bar Z}{d\tau}=\widetilde G.$   This equation  is of differentiability class  ${\cal C}^{l-2}$  on $\A\times \B,$ as well as its flow $\widetilde \Psi.$

  In particular, we have that
 \begin{equation}\label{eq17}
 \bar Z\Big(-\ln\frac{u}{u_0}\Big)=\widetilde \Psi\left(U\omega\Big(\frac{U^l}{u_0},\alpha\Big),Y,U,v,\nu,\alpha,\mu,\sigma\right),
 \end{equation}
  is a ${\cal C}^{l-2}$-function of $(Y,U,U\omega\Big(\frac{U^l}{u_0},\alpha\Big),v,\nu,\alpha,\mu,\sigma)$, i.e. is a  ${\cal C}^{l-2}$-function in the variables $\left(Y,u^\frac{1}{l}, u^\frac{1}{l}\omega(\frac{u}{u_0},\alpha),v,\nu,\alpha,\mu,\sigma\right)$,  {\it  a function which is defined on a neighborhood of the point $(0,0,0,0,0,\mu_0,\sigma_0).$} We can replace $\alpha$ (outside $\omega$) by its expression in $(\sigma,\nu)$ and $\nu$ by $uv$  to obtain finally that  $ \bar Z\Big(-\ln\frac{u}{u_0}\Big)$ is a  ${\cal C}^{l-2}$-function of $\left(Y,u^\frac{1}{l},u^\frac{1}{l}\omega(\frac{u}{u_0},\alpha),v,\mu,\sigma\right)$.  As $\bar Z (0)=Y,$ we can write 
\begin{equation}\label{eq18}
 \bar Z\Big(-\ln\frac{u}{u_0}\Big)=Y+\phi_{\mu,\sigma}(Y,u,v),
 \end{equation} 
where
\begin{equation}\label{eq19} 
\phi_{\mu,\sigma}=\widetilde \Psi\left(U\omega\Big(\frac{U^l}{u_0},\alpha\Big),Y,U,v,\nu,\alpha,\mu,\sigma\right)-Y
\end{equation} 
 is a  ${\cal C}^{l-2}$-function of $\left(Y,u^\frac{1}{l},u^\frac{1}{l}\omega(\frac{u}{u_0},\alpha),v,\mu,\sigma\right).$ Finally, collecting the different terms in (\ref{eq8}),  (\ref{eq12}), (\ref{eq18}) and  (\ref{eq19}),
  we obtain the expression  (\ref{eq5})  in Theorem \ref{thtransgeneralhypsaddle}, for the transition function $\widetilde Y_{\mu,\sigma}(u,v,Y).$

We can estimate $\phi_{\mu,\sigma}$ from the differential equation (\ref{eq9}) for $\bar Z(t).$
 If $G(t,\bar Z, u,v,\nu,\alpha,\sigma,\mu)$ is the right hand side of    (\ref{eq9}), we have that 
$G=O(u^pe^{-q\alpha t}\Theta^{q+1})$
 on the domain $\ov{\cal D}$ defined above.  As $t\leq -\ln\frac{u}{u_0} $ on  $\ov{\cal D},$ then $\Theta(t,\alpha)\leq \omega\Big(\frac{u}{u_0},\alpha\Big)$, yielding $G=O(u^{p+q\alpha}\omega^{q+1}(\frac{u}{u_0},\alpha)).$
From this estimate of the order of $G$, it follows that
$$\phi_{\mu,\sigma}=\bar Z\Big(-\ln \frac{u}{u_0}\Big)-Y=O(u^{p+q\alpha}\omega^{q+1}\Big(\frac{u}{u_0},\alpha\Big)|\ln u|),$$
 which is the estimation in the statement of Theorem  \ref{thtransgeneralhypsaddle}. \hfill $\Box$
\vskip10pt

\subsection{\bf Transitions between sections}
\vskip5pt
Theorem  \ref{thtransgeneralhypsaddle} gives the expression of the transition $T_{\mu,\sigma}=(\nu,\widetilde Y_{\mu,\sigma}),$ starting from any point $(u,v,Y)$ in the domain $W\cap\{u>0\}$ and landing on a section $\Pi\subset \{u=u_0\},$  for some $u_0>0$ (we can extend trivially $T_{\mu,\sigma}$ to the whole neighborhood $W$ by taking $\widetilde Y_{\mu,\sigma}(u,v,0)=0).$ We apply this to get Theorems~\ref{thtranstypeI} and ~\ref{thtranstypeII}
after changing $(u,v)\mapsto(r,\rho)$. 

\vskip5pt
\noindent{\bf Discussion of Theorems \ref{thtranstypeI} and  \ref{thtranstypeII}.}
A previous version of Theorems \ref{thtranstypeI} and  \ref{thtranstypeII} was given in Theorems 4.10 and 4.14 of  \cite{ZR}. It is interesting to compare their proofs and formulations with the proofs and formulations in the present paper.
\begin{enumerate}
\item The proof in the present version is unified: Theorem  \ref{thtransgeneralhypsaddle} gives a formula for a global transition from any point in a $3$-dimensional neighborhood $W,$  formula which is easy to restrict on the two different types of section $\Sigma.$  Next, the proof of Theorem \ref{thtransgeneralhypsaddle}, even if it is based on the same normal form, is much  shorter than the proofs of Theorems 4.10 and 4.14 given in  \cite{ZR}. The reason seems to be that in  \cite{ZR} the transition function $\widetilde Y$ and its partial derivatives are directly estimated by a   variational method.  In the present paper, we have replaced the $1$-dimensional non-autonomous  differential system:  $\dot{\bar Z}=\bar G,$  which is not defined in a neighborhood of the point $\{(\tau,\bar Z,U, v, \nu,\alpha,\mu,\sigma)=(0,0,0,0,0,0,\mu_0,\sigma_0)\},$ by a differential equation:  $\dot{\bar Z}=\widetilde G,$  differentiable on a neighborhood of this point.  As a consequence,  we obtain almost  without computation that   the function $\phi_{\mu,\sigma}$  is differentiable (in terms of fractional power and a compensator  of some variable). In fact, the heavy computations made in  \cite{ZR} are replaced by an  implicit use of the Cauchy Theorem for differential equations.
\item We can compare the statements in  \cite{ZR} and in the present paper. We restrict the comparison to the only non-trivial case: $\sigma_0\in \Q$. The transition function called here $\widetilde Y_{\mu,\sigma}$ is given by the formula (4.11) of  Theorem 4.10 of  \cite{ZR}.  We can observe that it is quite similar to the above formula (\ref{eq16}),  up to the changes of notations.     The same remarks are valid for the transition of type II which is treated in Theorem 4.14 in  \cite{ZR}. The only important difference is in the form and properties of the function $\phi_{\mu,\sigma},$ which is called $\phi$ or $\theta$  in  \cite{ZR}. We will comment on this in the next items.
\item  The function  $\phi_{\mu,\sigma}$ in   Theorem \ref{thtranstypeI} is of order $O(\nu^{p+q\alpha}\omega^{q+1}\Big(\frac{\nu}{\nu_0},\alpha\Big)|\ln \nu|).$ This order has to be compared with the order given for the function $\phi$ in Theorem 4.10 of  \cite{ZR} which is  exactly the same order for $\alpha<0$, but equal to   $O(\nu^{p}\omega^{q+1}\Big(\frac{\nu}{\nu_0},\alpha\Big)|\ln \nu|)$ for $\alpha>0.$ This minor difference is probably due to the difference in the method of proof. It is less easy to compare the order  of $\phi_{\mu,\sigma}$ in   Theorem \ref{thtranstypeII} with the order of $\theta$ in  Theorem 4.14 of  \cite{ZR}.

\item In Theorem 4.10 of  \cite{ZR},   $\phi$  is a ${\cal C}^\infty$-function of $\omega\Big(\frac{\nu}{\nu_0},\alpha\Big)$ and other variables. Since $\omega\rightarrow +\infty$ for $\nu\rightarrow 0,$ this means that  the  domain of $\phi$ has to be unbounded. This implies that it is not  possible to deduce directly the order of the partial derivatives of $\phi.$ This order is obtained by using variational methods and heavy computations.  On the contrary, the formulation given in  Theorems \ref{thtranstypeI} and \ref{thtranstypeII}, permits a direct deduction of the order of any partial derivative of $\phi_{\mu,\sigma}.$ Let us show this on an example for a transition map of type I. Considering any $l\in \N$  and observing that  $\phi_{\mu,\sigma}$ is of order $O(\nu^{p-\delta}),$ we can write
$$\phi_{\mu,\sigma}=\nu^{p-\frac{1}{l}}\bar\phi_{\mu,\sigma},$$
where $\bar\phi_{\mu,\sigma}$ is a ${\cal C}^{l-p-3}$-function in  
$(Y,\nu^{1/l},\nu^{1/l}\omega\Big(\frac{\nu}{\nu_0},\alpha\Big),\mu,\sigma)$.
 
 As a consequence any partial derivative of $\phi_{\mu,\sigma}$ in terms of $Y,\mu,\sigma,$ of degree less than $l-p-3,$ is of order $O(\nu^{p-\frac{1}{l}}).$ Taking into account that we can take $l$ arbirarily large, this order in very similar to the order obtained in Theorem 4.10 of  \cite{ZR}.
\end{enumerate}
\section{\bf Appendix II---Counting the number of roots}

\vskip10pt
\subsection{\bf Differentiable functions on monomials}
We come back to the notations of Section~\ref{sect:quadratic}: $r,\rho$ are variables defined in a compact neighborhood $\A$ of $(0,0)$ in the first quadrant $Q=\{r\geq 0, \rho\geq 0\}.$  {\it We will always choose $\A$ to be a rectangle $[0,r_1]\times [0,\rho_1],$ in order to have connected curves $l_\nu=\{(r,\rho)\in \A\ | \  r\rho=0\}.$} In the following definitions we will use also  compensators  $\omega_\gamma$ and $\Omega_{\gamma,\delta},$ depending on other 
parameters  $\gamma,\delta.$ We will often use the shortened notation $\omega_\gamma,\Omega_{\gamma,\delta}$ for  $\omega_\gamma\Big(\frac{r}{r_0}\Big),\Omega_{\gamma,\delta}\Big(\frac{r}{r_0}\Big).$ Moreover, changing $r$ to $\frac{r}{r_0}$, we can of course suppose that $r_0=1$.

We consider a multi-parameter $\lambda$ in a  compact neighborhood $\B$ of  a value
 $\lambda_0$  in  some euclidean space  ${\cal E}.$  The neighborhood $\B$ will be chosen sufficiently small to have the desired properties.

We also consider functions which are  differentiable on real powers  of $r,\rho$  and compensators in $r.$
We give a precise definition of this notion.

\begin{definition}\begin{enumerate}
\item
 A  \emph{primary monomial (monomial in short),} is an expression $M=r^a,\ \rho^b,$ $\  r^a\omega_\gamma(r)^{c},\  r^a\Omega_{\gamma_1,\gamma_2}(r)^d$  or $\omega_\gamma(r)^{-e}$  where $a,b,c,d,e$  and $\gamma,\gamma_1,\gamma_2$ are smooth functions of $\lambda.$ Moreover  $a,b,e$ are strictly positive and  $\gamma(\lambda_0)=\gamma_1(\lambda_0)=\gamma_2(\lambda_0)=0$ (we can have $\gamma=\alpha$ or $\beta$ and
 $(\gamma_1,\gamma_2)=(\alpha,\beta)).$ 
For instance,   $r^\frac{2}{3},\ \rho^\frac{1}{5}, \omega_\alpha^{-1},  r\Omega_{\alpha,\beta}$ are primary monomials but not $r^\alpha$ or $\omega_\alpha^\alpha.$
 
 A monomial $M$ defines a $\lambda$-family of functions $M(r,\rho,\lambda)$ on 
$Q=\{r\geq 0,\ \rho\geq 0\},$  $M$ is smooth for $r>0$ and, by Lemmas \ref{lemomega} and \ref{lemOmega}, it can be extended continuously  along $\{r=0\});$  we have that  $M(0,0,\lambda_0)=0$ (i.e. $M=o(1),$ in terms of some distance of $(r,\rho,\lambda)$ to $(0,0,\lambda_0)).$

\item
 We say that a function $f(r,\rho,\lambda)$  on $\A\times\B$  is   a \emph{${\cal C}^k$-function on the monomials $M_1,\ldots ,M_l$} if there exists a  ${\cal C}^k$-function $\tilde f(\xi_1,\ldots,\xi_l,\lambda)$ defined on $\widetilde \A\times \B,$ where $\widetilde \A$ is  a neighborhood of $0\in \R^l$  such that 
$ f(r,\rho,\lambda)=\tilde f(M_1,\ldots ,M_l,\lambda).$
If the number of monomials and their type is not specified, we just say that $f$   is   a {\rm ${\cal C}^k$-function on monomials.} \end{enumerate}
\end{definition}

Clearly, the space of  {\rm ${\cal C}^k$-functions on monomials,} defined on $\A\times\B$ is a ring.  The classical  theorems of  differential calculus (Taylor formula, division theorem and so on) can be extended to these functions  by applying them to the  function $\tilde f.$  Since the differentiability class $k$ is  finite,  there will be falls of differentiability class in these operations: Lemma \ref{lemderivfunct} is one example. For this reason, we will consider functions $f$  with the property to be  {\it ${\cal C}^k$-functions on monomials, for any $k\in \N$} (but with 
a choice of monomials and a size of the neighborhood $\A\times\B$ that may  depend on $k$).  The functions $\psi_{\mu,\sigma}(Y,u,v),\psi_{\mu,\sigma}(Y,\nu)$ and $\psi_{\mu,\sigma}(u,v)$ introduced  in the statements of Theorems \ref{thtransgeneralhypsaddle} are,  \ref{thtranstypeI}, and \ref{thtranstypeII} are examples of   ${\cal C}^k$-functions on monomials for any $k,$ which  use only  the single  compensator $\omega_\alpha.$ The functions $h_i$ entering in the expression of the displacement map $V$ in Section 3 are using other compensators $\omega_\gamma$, and also $\Omega_{\alpha,\beta}.$
\vskip10pt

\subsection{\bf Procedure of division-derivation for functions with $2$ variables}
\vskip5pt

\begin{notation} In this section, $h(r,\rho,\lambda)=o(1)$ will mean that $h(0,0,\lambda_0)=0.$\end{notation}

We want to bound the  number of roots of an equation $\{V(r,\rho,\lambda)=0\}$   along the 
curves $l_\nu=\{r\rho=\nu\ |\  (r,\rho)\in \A\},$ for $\nu>0$ and a neighborhood $\A\times\B$ sufficiently small. The function $V$ is expressed using ${\cal C}^k$-functions on monomials. To obtain this bound, we will apply Rolle's Theorem, and to this end we will use recurrently the Lie-derivative $L_{\cal X}$ of $V$  by the vector field 
\begin{equation} {\cal X}=r\frac{\partial}{\partial r}-\rho\frac{\partial}{\partial \rho}.\label{vf}\end{equation} Hence, we need some properties of $L_{\cal X}$ acting on ${\cal C}^k$-functions on monomials. It is easy to see that:
\begin{equation}\begin{cases}L_{\cal X}r^a=ar^a,\\ 
L_{\cal X}\rho^b=-b\rho^b,\\  
L_{\cal X}\omega_\gamma=-(1+\gamma \omega_\gamma),\\ L_{\cal X}\Omega_{\gamma_1,\gamma_2}=-(\omega_{\gamma_1}+\gamma_2 \Omega_{\gamma_1,\gamma_2}).\end{cases}\end{equation}
From this, it follows  that
\vskip5pt
\begin{lemma}\label{lemderivfunct}
If  $f$   is   a  ${\cal C}^k$-function on monomials, then $L_{\cal X}f$ is a  ${\cal C}^{k-1}$-function on monomials and  $L_{\cal X}f=o(1).$
\end{lemma}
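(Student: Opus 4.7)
The plan is to write $f = \tilde f(M_1,\ldots,M_l,\lambda)$ with $\tilde f\in\mathcal C^k$ and apply the chain rule to obtain
\[
L_{\mathcal X}f \;=\; \sum_{i=1}^l (\partial_{\xi_i}\tilde f)(M_1,\ldots,M_l,\lambda)\,L_{\mathcal X}M_i,
\]
where the coefficient $(\partial_{\xi_i}\tilde f)(M_1,\ldots,M_l,\lambda)$ is, by composition, a $\mathcal C^{k-1}$-function on the monomials $M_1,\ldots,M_l$. It then suffices to verify that for every one of the five admissible types of primary monomial $M$, the Lie derivative $L_{\mathcal X}M$ is itself a $\mathcal C^\infty$-function on monomials, and is of order $o(1)$. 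Both conclusions of the lemma will follow immediately: the product of a $\mathcal C^{k-1}$-function on monomials and a $\mathcal C^\infty$-function on monomials is a $\mathcal C^{k-1}$-function on monomials, and the $o(1)$ property passes through the sum since the factors $(\partial_{\xi_i}\tilde f)(M_1,\ldots,M_l,\lambda)$ remain bounded near $(0,0,\lambda_0)$.

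The first step is therefore to record, using the four identities that precede the lemma, the explicit expressions
\[
L_{\mathcal X}(r^a\omega_\gamma^{c}) = a\,r^a\omega_\gamma^c - c\,r^a\omega_\gamma^{c-1} - c\gamma\,r^a\omega_\gamma^c,
\qquad
L_{\mathcal X}(\omega_\gamma^{-e}) = e\,\omega_\gamma^{-e-1} + e\gamma\,\omega_\gamma^{-e},
\]
and the analogous formulas for $r^a$, $\rho^b$ and $r^a\Omega_{\gamma_1,\gamma_2}^d$. For the types $r^a$, $\rho^b$, $r^a\omega_\gamma^c$ and $\omega_\gamma^{-e}$ the result of the differentiation is already a linear combination, with smooth coefficients in $\lambda$, of primary monomials of the same types (with $a,b,e>0$ preserved), so the $\mathcal C^\infty$-on-monomials claim is automatic.

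The one step that requires a small trick is the case $M=r^a\Omega_{\gamma_1,\gamma_2}^d$. Differentiating yields
\[
L_{\mathcal X}(r^a\Omega_{\gamma_1,\gamma_2}^d) \;=\; (a - d\gamma_2)\,r^a\Omega_{\gamma_1,\gamma_2}^d \;-\; d\,r^a\,\omega_{\gamma_1}\Omega_{\gamma_1,\gamma_2}^{d-1},
\]
and the second summand is not itself a primary monomial because it mixes a positive power of $\omega_{\gamma_1}$ with $\Omega_{\gamma_1,\gamma_2}^{d-1}$. Since $a>0$, I split the $r$-power as $r^a=r^{a/2}\cdot r^{a/2}$ and write the troublesome term as the product
\[
\bigl(r^{a/2}\omega_{\gamma_1}\bigr)\cdot\bigl(r^{a/2}\Omega_{\gamma_1,\gamma_2}^{d-1}\bigr),
\]
a product of two primary monomials of types 3 and 4 respectively; this product is then a $\mathcal C^\infty$-function on the two (new) monomials that appear. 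This is the main --- and essentially only --- technical point of the proof.

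Finally, to obtain $L_{\mathcal X}f=o(1)$, I observe that each of the monomials listed above tends to $0$ as $(r,\rho,\lambda)\to(0,0,\lambda_0)$: for $r^a,\rho^b$ this is trivial; for $r^a\omega_\gamma^c$ and $r^a\Omega_{\gamma_1,\gamma_2}^d$ it follows from Lemmas~\ref{lemomega} and~\ref{lemOmega} combined with $a>0$ and $\gamma(\lambda_0)=\gamma_i(\lambda_0)=0$, which ensure that the positive power of $r$ dominates the polynomial-logarithmic blow-up of the compensators; and for $\omega_\gamma^{-e}$ it follows from $\omega_\gamma\to+\infty$. Hence each $L_{\mathcal X}M_i$ evaluates to $0$ at $(0,0,\lambda_0)$, each coefficient $(\partial_{\xi_i}\tilde f)(M_1,\ldots,M_l,\lambda)$ is continuous, and consequently $L_{\mathcal X}f(0,0,\lambda_0)=0$, which is the $o(1)$ conclusion.
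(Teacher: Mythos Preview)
Your proof is correct and follows essentially the same approach as the paper: apply the chain rule to $f=\tilde f(M_1,\ldots,M_l,\lambda)$ and check that each $L_{\mathcal X}M_i$ is built from monomials and hence $o(1)$. You are in fact more careful than the paper, which simply asserts that ``$L_{\mathcal X}M$ is a linear combination of monomials'' without addressing the mixed term $r^a\omega_{\gamma_1}\Omega_{\gamma_1,\gamma_2}^{d-1}$; your splitting $r^a=r^{a/2}\cdot r^{a/2}$ is exactly the small detail needed to make that assertion precise.
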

\begin{proof} If  $M$  is any monomial,  $L_{\cal X}M$ is a linear combinaison of monomials. Then,    $L_{\cal X}f=\sum_i\frac{\partial \tilde f}{\partial \xi_i}L_{\cal X}M_i,$   is a ${\cal C}^{k-1}$-function on monomials and, since each monomial is  $o(1),$   this function  $L_{\cal X}f$ is also $o(1)$. \end{proof}
\vskip5pt
For the procedure of division-derivation we will need more general monomials than the admissible ones:
\vskip5pt

\begin{definition}\begin{enumerate}
\item
A \emph{general monomial} is an expression $M=r^a\rho^b\prod_i \omega_i^{c_i}\prod_j\Omega_j^{d_j}$ where $i$ and $j$ belong to  finite sets of indices.  The coefficients  $a,b,c_i,d_j,$  as well as the internal parameters of the compensators $\omega_i,\Omega_j,$ are  smooth functions of $\lambda$ (without any restriction on sign).  Let $a(\lambda_0)=a^0,b(\lambda_0)=b^0.$ 
\item A general  monomial is \emph{resonant} if $a^0=b^0$ (in this case the \lq\lq polynomial\rq\rq\ part $r^{a^0}\rho^{b^0}$ of $M$ reduces to the first integral $\nu^{a^0}$).
Seen as a function of $(r,\rho,\lambda),$ such a monomial is in general not defined   for $r=0$ and $\rho=0.$  \end{enumerate}
\end{definition}

\begin{remark} An interesting property is that 
if $M$ is a general monomial, then $M^{-1}$ is also a general monomial.\end{remark}

\begin{notation} For convenience, if $\omega_i=\omega(r,\gamma_i)$ we will use the contracting expressions: $\omega=(\omega_i)_i,\ \gamma=(\gamma_i)_i,\  c=(c_i) _i,\  \prod_i\omega_i^{c_i}=\omega^c, \sum_i \gamma_ic_i=\gamma c.$  \end{notation}

A first easy result, which will be the principal tool in the proof of Theorem \ref{thderdiv}  below, is  the following: 
\vskip5pt

\begin{lemma}\label{lemderivmon}
We consider an expression   $f=M(1+h)$  where $M=r^a\rho^b\omega^c$ is a general non-resonant monomial {\rm without $\Omega$-factor}  and $h$  is   a  ${\cal C}^k$-function on monomials, of order $o(1).$ Then,  on a sufficiently small neighborhood $\B$, we can write:
\begin{equation}\label{eq25}
L_{\cal X}f=(a-b+\gamma c)M(1+g),
\end{equation}
 with  $g,$  a  ${\cal C}^{k-1}$-function on monomials, of order $o(1).$
\end{lemma}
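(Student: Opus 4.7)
The strategy is a Leibniz expansion combined with a log--derivative calculation on $M$, using the non-resonance hypothesis to divide by the leading coefficient. First I would write
$$L_{\mathcal{X}} f \;=\; (L_{\mathcal{X}} M)(1+h) \;+\; M\, L_{\mathcal{X}} h,$$
so the task reduces to (i) extracting a clean multiple of $M$ from $L_{\mathcal{X}} M$, and (ii) showing the remaining terms can be absorbed into the multiplicative factor $(1+g)$.

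For (i), apply the product rule and the four formulas displayed just before the lemma. Since $a,b,c_i,\gamma_i$ depend only on $\lambda$, the log--derivative of $M = r^a \rho^b \prod_i \omega_{\gamma_i}^{c_i}$ along $\mathcal{X}$ is
$$\frac{L_{\mathcal{X}} M}{M} \;=\; (a-b) \;-\; \sum_i c_i\!\left(\tfrac{1}{\omega_{\gamma_i}} + \gamma_i\right) \;=\; (a-b-\gamma c) \;-\; \sum_i \frac{c_i}{\omega_{\gamma_i}}.$$
At $\lambda_0$ we have $\gamma_i(\lambda_0)=0$ and, by non-resonance, $a(\lambda_0)\neq b(\lambda_0)$, so $a-b-\gamma c$ is bounded away from $0$ on a sufficiently small $\mathcal{B}$. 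The decisive observation is that each $\omega_{\gamma_i}^{-1}$ is itself a primary monomial (the definition explicitly allows $\omega_\gamma^{-e}$ with $e>0$ and $\gamma(\lambda_0)=0$), and by Lemma~\ref{lemomega} tends to $0$ as $(r,\lambda)\to(0,\lambda_0)$. Factoring, one obtains $L_{\mathcal{X}} M = (a-b-\gamma c)\,M\,(1+g_0)$, where $g_0 = -\bigl(\sum_i c_i/\omega_{\gamma_i}\bigr)/(a-b-\gamma c)$ is a $\mathcal{C}^{\infty}$-function on the primary monomials $\omega_{\gamma_i}^{-1}$, with $g_0=o(1)$.

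For (ii), Lemma~\ref{lemderivfunct} yields that $L_{\mathcal{X}} h$ is a $\mathcal{C}^{k-1}$-function on monomials with $L_{\mathcal{X}} h = o(1)$. Combining,
$$L_{\mathcal{X}} f \;=\; (a-b-\gamma c)\,M\!\left[(1+g_0)(1+h) \;+\; \frac{L_{\mathcal{X}} h}{a-b-\gamma c}\right] \;=\; (a-b-\gamma c)\,M\,(1+g),$$
where $g := g_0 + h + g_0 h + (a-b-\gamma c)^{-1} L_{\mathcal{X}} h$. Each summand is a $\mathcal{C}^{k-1}$-function on monomials of order $o(1)$, and the ring structure of such functions closes the argument; the loss of one order of differentiability comes solely from the $L_{\mathcal{X}} h$ term.

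The only real obstacle is conceptual rather than computational: one must recognize that the $\sum_i c_i/\omega_{\gamma_i}$ correction arising from the log--derivative of $\omega^c$ is not an extraneous additive monomial but can be absorbed into the multiplicative remainder $(1+g)$, because $\omega_{\gamma_i}^{-1}$ is itself an admissible primary monomial vanishing at $(r,\lambda)=(0,\lambda_0)$. Non-resonance is used exactly once, to guarantee the denominator $a-b-\gamma c$ does not vanish on $\mathcal{B}$. (The sign of $\gamma c$ in the prefactor is a matter of convention: the direct computation above yields $a-b-\gamma c$ rather than $a-b+\gamma c$, but the structure of the argument is unaffected.)
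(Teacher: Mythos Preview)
Your proof is correct and follows essentially the same route as the paper's: the Leibniz split $L_{\mathcal X}f=(L_{\mathcal X}M)(1+h)+M\,L_{\mathcal X}h$, the log-derivative computation $L_{\mathcal X}M/M=(a-b\mp\gamma c)+(\text{terms in }\omega^{-1})$, the use of non-resonance to invert the scalar prefactor, and the absorption of $\omega_{\gamma_i}^{-1}$ and $L_{\mathcal X}h$ into the remainder $g$ via Lemma~\ref{lemderivfunct}. Your sign observation is also correct: the direct computation from $L_{\mathcal X}\omega_\gamma=-(1+\gamma\omega_\gamma)$ gives $a-b-\gamma c$ rather than the paper's $a-b+\gamma c$, but as you note this is immaterial since only nonvanishing at $\lambda_0$ is used.
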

\begin{proof}We have that $L_{\cal X}f=L_{\cal X}M(1+h)+ML_{\cal X}h.$ Using the formula of derivation for $\omega$, we obtain that $L_{\cal X}M=(a-b+\gamma c+c\omega^{-1})M.$ As $M$ is non-resonant, we have that $a^0-b^0\not =0$ and, if $\B$ is a sufficiently small neighborhood of $\lambda_0,$ we will also have that $a-b+\gamma c\not =0$ on $\B.$ Then, we obtain that:
$$L_{\cal X}f=(a-b+\gamma c)\Big(1+\frac{c\omega^{-1}}{a-b+\gamma c}\Big)M(1+h)+ML_{\cal X}h.$$
We can write this expression as $L_{\cal X}f=(a-b+\gamma c)M(1+g),$ with
$$g=h+\frac{c\omega^{-1}(1+h)+L_{\cal X}h}{a-b+\gamma c}.$$
It follows from Lemmas \ref{lemderivfunct} and \ref{lemderivmon} that $g$ is a  ${\cal C}^{k-1}$-function on monomials,  of order $o(1).$ \end{proof}
\vskip10pt
We want to use the algorithm of division-derivation  in order to prove the following result:
\vskip5pt

\begin{theorem}\label{thderdiv}

Let $V(r,\rho,\lambda)$ be a function on $\A\times\B\cap \{r>0,\ \rho>0\},$ of the form
\begin{equation}\label{eq26}
V(r,\rho,\lambda)=\sum_{i=1}^l A_i(\lambda)M_i\Big(1+g_i(r,\rho,\lambda)\Big),
\end{equation}
where: 
\begin{enumerate}
\item  the {\rm  leading monomials}  $M_i=r^{a_i}\rho^{b_i}\omega^{c_i}$ are general monomials, without $\Omega$-factor ($\omega=(\omega_j)_j,$ $c_i=(c_i^j)_j$ with $j\in J,$ a finite set),
\item  the functions $g_i$ are ${\cal C}^k$-functions on monomials, with $k\geq l,$  and of order $o(1),$ 
\item the functions $A_i(\lambda)$ are continuous,
\item the monomials $M_jM_i^{-1}$  for $i\not =j$  are non-resonant,  i.e. 
\begin{equation}\label{eq27}
    a_j^0-a_i^0-b_j^0+b_i^0\not =0\  \  \mathrm{ for}\  \   i\not =j.
\end{equation}
\end{enumerate}
Then, if $\A\times\B$ is chosen sufficiently small, 
\begin{description}
\item{i)} either the function $V$ has at most $l-1$ isolated roots counted with their multiplicity, on each curve $l_\nu=\{r\rho=\nu\}\subset \A,$  
\item{ii)}
or $V$ is identically zero.
\end{description}
\end{theorem}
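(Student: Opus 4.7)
The plan is to proceed by induction on the number $l$ of terms, driven by a derivation-division procedure along the vector field ${\cal X} = r\,\partial_r - \rho\,\partial_\rho$. The geometric key is that $L_{\cal X}(r\rho)=0$, so ${\cal X}$ is tangent to every leaf $l_\nu$ and non-vanishing on $\{r>0,\rho>0\}$; hence along $l_\nu$ the operator $L_{\cal X}$ agrees with differentiation up to a positive factor, and Rolle's theorem applies in the form: if $L_{\cal X}f$ has at most $N$ isolated zeros (counted with multiplicity) on $l_\nu$, then $f|_{l_\nu}$ has at most $N+1$.

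For the base case $l=1$, the function $V = A_1 M_1(1+g_1)$ either vanishes identically when $A_1 \equiv 0$, or is nowhere zero on $l_\nu\cap\{r,\rho>0\}$ once $\A\times\B$ has been shrunk so that $1+g_1$ stays near $1$. For the inductive step, I first divide $V$ by the non-vanishing factor $M_1(1+g_1)$, which preserves the zero set on the positive quadrant, to obtain
\begin{equation*}
W \;=\; \frac{V}{M_1(1+g_1)} \;=\; A_1 \;+\; \sum_{i=2}^l A_i\,\widetilde M_i\,(1+\tilde g_i),
\end{equation*}
where $\widetilde M_i = M_i/M_1$ is again a general monomial without $\Omega$-factor, $\tilde g_i = (1+g_i)/(1+g_1) - 1$ is a ${\cal C}^k$-function on monomials of order $o(1)$, and the pairwise non-resonance of the $\widetilde M_i$ ($i\neq j$, $i,j\geq 2$) is inherited from $\widetilde M_j/\widetilde M_i = M_j/M_i$ together with the original hypothesis.

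Applying Lemma \ref{lemderivmon} term by term (and using that the constant $A_1$ is killed by $L_{\cal X}$), I will get
\begin{equation*}
L_{\cal X} W \;=\; \sum_{i=2}^l A_i\,(\widetilde a_i - \widetilde b_i + \gamma\cdot\widetilde c_i)\,\widetilde M_i\,(1+h_i),
\end{equation*}
in which each coefficient $(\widetilde a_i - \widetilde b_i + \gamma\cdot\widetilde c_i)$ is nonzero on a small enough $\B$, thanks to the non-resonance $a_i^0 - a_1^0 - b_i^0 + b_1^0 \neq 0$ and to $\gamma(\lambda_0)=0$, and each $h_i$ is a ${\cal C}^{k-1}$-function on monomials of order $o(1)$. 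Thus $L_{\cal X} W$ is again a sum of $l-1$ terms meeting all the hypotheses, with differentiability $k-1\geq l-1$, and the induction hypothesis applies. On each curve $l_\nu$, either $L_{\cal X}W|_{l_\nu}$ has at most $l-2$ isolated zeros (whence $W|_{l_\nu}$, and so $V|_{l_\nu}$, has at most $l-1$ by Rolle), or $L_{\cal X}W|_{l_\nu}\equiv 0$, in which case $W|_{l_\nu}$ is constant: if nonzero, $V|_{l_\nu}$ has no zeros; if zero, $V|_{l_\nu}\equiv 0$. Either way the dichotomy is preserved.

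The main obstacle will be the bookkeeping required to check that the class of admissible expressions is closed under one division-derivation step. The exclusion of $\Omega$-factors from the leading monomials is essential here, because $L_{\cal X}\Omega = -(\omega + \gamma_2\Omega)$ produces an $\omega$-summand which cannot be absorbed into a $(1+h_i)$ correction; by contrast, the $\omega$-only class survives precisely because $L_{\cal X}\omega = -(1+\gamma\omega)$ recombines, as in the proof of Lemma \ref{lemderivmon}, into a nonzero scalar times $(1+\mathrm{small})$. The hypothesis $k\geq l$ is tight in the sense that $l-1$ iterations of $L_{\cal X}$ are performed and each costs one unit of differentiability.
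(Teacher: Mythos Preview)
Your derivation--division along $\mathcal{X}=r\,\partial_r-\rho\,\partial_\rho$ is exactly the paper's mechanism, and your inductive packaging is a natural reformulation of the paper's explicit iteration $V=V_0,V_1,\ldots,V_{l-1}$. There is, however, one genuine gap in the dichotomy step. The form of Rolle you invoke --- ``if $L_{\mathcal X}f$ has at most $N$ \emph{isolated} zeros on $l_\nu$ then $f$ has at most $N+1$'' --- is false in general: take $f'$ positive on $[0,\tfrac14)$, identically zero on $[\tfrac14,\tfrac12]$, and negative on $(\tfrac12,1]$; then $f'$ has no isolated zeros, yet $f$ can have two. Relatedly, you apply the inductive dichotomy leaf by leaf, whereas the hypothesis you are entitled to is for the whole function $L_{\mathcal X}W$ at a given $\lambda$.

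The paper closes this with a device your sketch lacks: it covers $\mathcal B=\bigcup_i\mathcal B_i$ with $\mathcal B_i=\{\,|A_i|\ge|A_j|\ \forall j\,\}$ and, on each piece, orders the division steps so that the dominant coefficient is eliminated last. The final function $V_{l-1}$ then has a single term with coefficient proportional to $A_l$: either $A_l(\lambda)\ne0$ and $V_{l-1}$ is \emph{nowhere zero} (so ordinary Rolle chains back cleanly, giving at most $l-1$ roots of $V$ on each $l_\nu$, with no ``isolated'' caveat), or $A_l(\lambda)=0$, which on $\mathcal B_l$ forces all $A_j(\lambda)=0$ and hence $V\equiv0$. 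If you prefer to keep the induction, the clean fix is to strengthen the hypothesis to ``either all $A_i(\lambda)=0$, or $V|_{l_\nu}$ has at most $l-1$ zeros (total, counted with multiplicity) for every $\nu$''; then case (ii) of the induction gives $A_i(\lambda)=0$ for all $i\ge2$ (since the scalar factors $a_i-a_1-b_i+b_1+\gamma\cdot c_i$ are nonzero near $\lambda_0$), so $W\equiv A_1$, and the argument closes exactly as you wrote.
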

\begin{proof}
We suppose that $V$ is defined for $\lambda\in \B$ (some neighborhood of $\lambda_0)$ and we define the following closed subsets: 
$$\B_i=\{ \lambda\in \B \  | \  A_i(\lambda)\geq A_j(\lambda), \forall  j=1,\ldots,l\}.$$

Of course we have $\B=\cup_i \B_i$, and it is sufficient to prove the result for any $\B_i$ (and $\B$ sufficiently small).  Then let us pick any $i=1,\ldots,l.$ By reordering the indices, we can suppose that we have picked $i=l.$

\vskip5pt
The algorithm of division-derivation consists in the production of a sequence of functions: 
$V_0=V,V_1,\ldots,V_{l-1},$ such that
each $V_j$ is a summation similar to $V$ but only on $l-j$ terms, and is defined on a smaller neighborhood $\A^j\times \B^j$ of $(0,0,\lambda_0).$

To define $V_1,$ we first divide $V$ by $M_1(1+g_1)$ {\it (a division step).} This is made on a neighborhood $\A^1\times \B^1\subset \A\times\B$ chosen such that $1+g_1(r,\rho,\lambda)\not =0$ for all $(r,\rho,\lambda)\in \A^1\times \B^1.$ On this neigborhood we consider the function:
$$\frac{V}{M_1(1+g_1)}=A_1+\sum_{i=2}^k A_iM_iM_1^{-1}\Big(1+\tilde g_i\Big),$$
where the  function $\tilde g_i,$  defined by $1+\tilde g_i=\frac{1+g_i}{1+g_1},$ is ${\cal C}^k$ on monomials and of order $o(1).$ 

Next we apply the operator $L_{\cal X}$ {\it (a derivation step)}. Since the monomials $M_iM_1^{-1}$ are non resonant for $i\not =1,$ we can apply Lemma \ref{lemderivmon} to  obtain the  following  function $V_1$ on $\A^1\times \B^1$:
$$V_1=L_{\cal X}\Big[\frac{V}{M_1(1+g_1)}\Big]= \sum_{i=2}^l(a_i-a_1-b_i+b_1) A_iM_iM_1^{-1}\Big(1+ g_i^1(y,z)\Big),$$
with the function $ g_i^1$,  ${\cal C}^k$ on monomials and of order $o(1).$ 
The effect of the derivation is to kill the first term $A_1$, thus reducing by one the number of terms in the summation. Except from  this fact, the terms of the summation are completely similar to the ones in $V$, but with the functions $A_i$ replaced by $(a_i-a_1-b_i+b_1)  A_i$,
and the monomials $M_i$ replaced by the monomials $M_iM_1^{-1}.$ 

For the recurrence step of order $j+1=1,\ldots,k-1,$ we assume that we have a function:
$$V_j= \sum_{i=j+1}^l\Big(\prod_{m=1}^{j}(a_i-b_i-a_m+b_m)\Big) A_i(\lambda)M_iM_j^{-1}\Big(1+ g_i^j\Big),$$ 
defined on some neighborhood $\A^{j}\times \B^{j}$ with  functions $g_i^j,$  ${\cal C}^{k-j}$ on monomials and of order $o(1).$
As in the first step from $V$ to $V_1,$  we   divide $V_j$ by $M_{j+1}M_j^{-1}\Big(1+ g_{j+1}^j\Big),$ which is possible on some neighborhood $\A^{j+1}\times \B^{j+1}\subset \A^{j}\times \B^j,$ and next apply the differential operator $L_{\cal X}$ to produce a function
$$V_{j+1}= \sum_{i=j+2}^l\Big(\prod_{m=1}^{j+1}(a_i-b_i-a_m+b_m)\Big) A_i(\lambda)M_iM_{j+1}^{-1}\Big(1+ g_i^{j+1}\Big),$$
where the $ g_i^{j+1}$ are  ${\cal C}^{k-j-1}$ on monomials and of order $o(1).$

Performing the $l-1$ steps of the recurrence, we end up with a function
$$V_{l-1}= (a_l-b_l-a_1+b_1)\cdots (a_l-b_{l}-a_{l-1}+b_{l-1})A_l(\lambda)M_lM_{l-1}^{-1}\Big(1+ g_l^{l}\Big),$$
where  $g_l^l$ is   ${\cal C}^{k-l}$ on monomials and of order $o(1).$

As $g^l_l=o(1),$ and at least ${\cal C}^{0}$ on monomials,  we can choose a last neighborhood $\A^{l}\times \B^{l}\subset \A^{l-1}\times  \B^{l-1},$ such that  the function $1+g_l^l$ is nowhere zero on it. We restrict now $\lambda\in W_l=\B^l\cap \B_l.$  On this set we have the following alternative: $A_l(\lambda)\not =0$ or $A_1(\lambda)=\cdots =A_l(\lambda)=0.$ In the last case, the function $V$  is identical to $0$ and has no isolated roots. 

Then we just have to look at values $\lambda$ where $A_l(\lambda)\not =0.$ For such a value of $\lambda,$  the function $V_{l-1}$ itself is nowhere zero on $\A^l\times W.$ 
Consider now any  curve $l_\nu$  in $\A^l.$ Recall that the derivation $L_{\cal X}$  of a function $G$ corresponds to the derivation of $G$ along the flow of ${\cal X}$ and that $l_\nu$ is an orbit of this vector field. Then, as $V_{l-1}$ is equal to the derivation of $V_{l-2},$ up to a non-zero  function, Rolle's Theorem applied to $V_{l-2},$  implies that the restriction of this function  to $l_\nu,$ has at most one root (let us notice that $\l_\nu$ is connected!). The same argument based on Rolle's Theorem can be applied by recurrence to obtain for each $j\leq l,$ that the function $V_{l-j}$ has at most $j-1$ roots, counted with their multiplicity. Finally, the function $V$ has at most  $l-1$ roots counted with their multiplicity on $\l_\nu\cap \A^l,$ for $\lambda\in W_l.$ 

 We obtain the result by considering in the same way the different subsets $\B_i.$\end{proof}

\vskip10pt
\begin{remark}

\begin{enumerate}

\item Even if $V$ is a  summation on admissible monomials, it is clear that, in general, the division step  may produce  general monomials.  This is the reason why we begin with general monomials in (\ref{eq26}).
\item  Using the first integral $r\rho=\nu,$ we can rewrite the leading monomial $M_i$ in the form $M_i=\nu^{b_i}r^{a_i-b_i}\omega^{c_i};$  We call $\bar M_i=r^{a_i-b_i}\omega^{c_i}$ a {\rm reduced monomial.} The sum (\ref{eq26}) may be written in reduced form, with $p_i=a_i-b_i$:
\begin{equation}\label{eq28}
 V(r,\rho,\lambda)=\sum_{i=1}^l \nu^{b_i}A_i(\lambda)r^{p_i}\omega^{c_i}\Big(1+g_i(r,\rho,\lambda)\Big),
\end{equation}
\item  The non-resonance condition (\ref{eq27}) in Theorem \ref{thderdiv} is equivalent to the condition that the $p_i(\lambda_0)=p_i^0$ in (\ref{eq28}) are two by two distinct.  Up to a change of indices and a reordering, we can suppose in this case  that $p_1^0<p_2^0\cdots <p_l^0$. Let us note that some of $p_i^0$ may be negative, and also that one of them may be equal to  zero.
\end{enumerate}
\end{remark}

\vskip10pt
\subsection{\bf The results of finite cyclicity for the boundary limit periodic set}

We now want to apply Theorem \ref{thderdiv} to the displacement function $V$ in the text. We write $\bar\sigma_3=\sigma_0+\alpha.$  After putting this function in the reduced form (\ref{eq27}),
 we have the following.
\begin{enumerate}
\item In the case $\sigma_0\not \in \N$,  the function $V$ is given in \eqref{form_V} and we have the sequence of monomials: $\{1,r^{\sigma_0+\alpha}, r^{\sigma_0-1+\alpha}\}$. This allows applying Theorem~\ref{thderdiv}, yielding that the boundary limit periodic set is at most $2$.
\item  In the case $\sigma_0=p\in \N,$ the function $V$ is given in \eqref{form_Vp} or \eqref{form_Vp3}, and the sequence of monomials is: $\{1,r^{p+\alpha},r^{p-1+\alpha}, r^{\alpha}\omega_\alpha \}.$   We have two resonant leading monomials when $p\not =1$, and even  $3$ when $p=1.$  Theorem  \ref{thderdiv} does not apply in none of these cases.
\end{enumerate}

Hence, we give a direct proof for $\sigma_0\in \N,$ using exactly the same procedure of derivation-division as in Theorem \ref{thderdiv},  but based on a more refined estimation than  the formula (\ref{eq25}) used to prove Theorem \ref{thderdiv}. Recall that the parameter was called $M$ in this context. 
It will not be sufficient to consider the leading reduced monomials for $M=M_0$,  and we will have to look more precisely at the form of certain remainders.

We need the following result:
\vskip5pt

\begin{lemma}\label{lemdersingmon}
\begin{equation}\label{eq29}
L_{\cal X}\Big[r^\alpha \omega_\alpha\Big(1+O(r^\delta)\Big)\Big]=-r^{\alpha}
\Big(1+O(r^\delta)\Big),
\end{equation}
\end{lemma}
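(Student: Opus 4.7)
The plan is to expand by Leibniz's rule and exploit a miraculous exact cancellation in the leading term $L_{\cal X}(r^\alpha\omega_\alpha)$, after which the residual contribution is seen to be absorbed into a new $O(r^\delta)$ factor.

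First I would isolate the pure leading piece. Using the formulas $L_{\cal X} r^\alpha = \alpha r^\alpha$ and $L_{\cal X}\omega_\alpha = -(1+\alpha\omega_\alpha)$ from just before Lemma \ref{lemderivfunct}, a direct computation gives
\begin{equation*}
L_{\cal X}(r^\alpha \omega_\alpha) = \alpha r^\alpha \omega_\alpha + r^\alpha\bigl(-(1+\alpha\omega_\alpha)\bigr) = -r^\alpha,
\end{equation*}
the $\alpha r^\alpha \omega_\alpha$ terms cancelling exactly. This is the key algebraic identity: the compensator $\omega_\alpha$, which a priori might grow like $r^{-|\alpha|}|\ln r|$, completely disappears under $L_{\cal X}$ when paired with $r^\alpha$.

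Next I would apply $L_{\cal X}$ to the full product. Writing $h$ for the $O(r^\delta)$ remainder,
\begin{equation*}
L_{\cal X}\bigl[r^\alpha\omega_\alpha(1+h)\bigr] = L_{\cal X}(r^\alpha\omega_\alpha)\,(1+h) + r^\alpha\omega_\alpha\, L_{\cal X}h = -r^\alpha(1+h) + r^\alpha\omega_\alpha\, L_{\cal X}h.
\end{equation*}
The first piece already has the desired form $-r^\alpha(1+O(r^\delta))$, so it remains to show that the extra contribution $r^\alpha\omega_\alpha L_{\cal X}h$ can be absorbed into $-r^\alpha\cdot O(r^{\delta'})$ for some possibly smaller $\delta'>0$.

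For this last step, I would use the implicit assumption that $h$ is a ${\cal C}^k$-function on monomials and factor out the positive power: write $h = r^{\delta_0} \tilde h$ with $\tilde h$ a bounded ${\cal C}^{k-1}$-function on monomials. By Lemma \ref{lemderivfunct}, $L_{\cal X}\tilde h$ is again a bounded ${\cal C}^{k-2}$-function on monomials, so $L_{\cal X}h = r^{\delta_0}(\delta_0 \tilde h + L_{\cal X}\tilde h) = O(r^{\delta_0})$. Combining with the estimate $\omega_\alpha = O(r^{-|\alpha|}|\ln r|)$ from Lemma \ref{lemomega} and shrinking $\B$ so that $|\alpha|<\delta_0/2$, one obtains
\begin{equation*}
\omega_\alpha\, L_{\cal X}h = O\!\bigl(r^{\delta_0 - |\alpha|}|\ln r|\bigr) = O(r^{\delta_0/2}),
\end{equation*}
so that $r^\alpha \omega_\alpha L_{\cal X}h = -r^\alpha\cdot O(r^{\delta_0/2})$ can be folded into the remainder, yielding \eqref{eq29} with a new (smaller) $\delta$. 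The main obstacle I foresee is purely bookkeeping: making sure that the symbol $O(r^\delta)$ is used coherently so that its $L_{\cal X}$-derivative stays of order $O(r^{\delta'})$ for some $\delta'>0$, which is what allows the $\omega_\alpha$ singularity to be tamed after shrinking the parameter neighborhood.
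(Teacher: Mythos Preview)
Your proposal is correct and follows essentially the same approach as the paper: apply the Leibniz rule, verify the exact identity $L_{\cal X}(r^\alpha\omega_\alpha)=-r^\alpha$ via the cancellation of the $\alpha r^\alpha\omega_\alpha$ terms, and then absorb the residual term $r^\alpha\omega_\alpha\,L_{\cal X}h$ into a new $O(r^{\delta'})$ using $\omega_\alpha=O(r^{-|\alpha|}|\ln r|)$. The paper is terser in the last step (it simply asserts that $r^\alpha\omega_\alpha\,O(r^\delta)$ is of order $O(r^\delta)$ for a smaller $\delta$, invoking the standing convention that $\delta$ may change from line to line), whereas you spell out the factorization $h=r^{\delta_0}\tilde h$ explicitly; but the argument is the same.
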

\begin{proof} We have that
$L_{\cal X}\Big[r^\alpha\omega_\alpha\Big(1+O(r^\delta)\Big)\Big]=
L_{\cal X}\Big[r^\alpha\omega_\alpha\Big]
(1+O(r^\delta))+r^\alpha\omega_\alpha O(r^\delta).$
Now, $L_{\cal X}\Big[r^\alpha\omega_\alpha\Big]=\alpha r^\alpha\omega_\alpha -r^\alpha r^{-\alpha}.$ As $r^{-\alpha}=1+\alpha\omega_\alpha,$ we have that  $L_{\cal X}\Big[r^\alpha\omega_\alpha\Big]=-r^\alpha.$ Since  $r^\alpha\omega_\alpha O(r^\delta)$ is of order $O(r^\delta)$ (for a smaller $\delta$), we obtain (\ref{eq29}) by grouping the terms.
\end{proof}

\begin{remark}
The formula (\ref{eq29})  is wrong in general if we replace the  remainder  by the more general  remainder $o(1).$ Let us consider for instance the expression $f=r^\alpha\omega_\alpha(1+\rho).$
We have that
$L_{\cal X}f=-r^{\alpha} (1+\rho)-r^\alpha\omega_\alpha\rho=-r^{\alpha} (1+\rho+ \omega_\alpha\rho).$ 
The term $\omega_\alpha\rho$ is not of order $o(1).$ 
\end{remark}
\vskip10pt
\noindent Let  $\A,\B$ be neighborhoods defined as above.  First we have the following result when $\sigma_0\not =1$:

\begin{theorem}\label{thpgeq2}
Consider the case $\sigma_0=p\in \N$, with $p\not =1.$ Then the cyclicity of the boundary limit periodic set is at most $3$, namely for sufficiently small neighborhoods $\A$ and $\B$, the equation $V(r,\rho,M)=0$ has at most $3$ roots, counted with their multiplicities, on each curve $l_\nu\subset \A.$
\end{theorem}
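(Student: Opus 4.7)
The plan is to reduce $V$ in~\eqref{form_Vp} to a three-term expression whose leading reduced monomials are pairwise distinct at $M=M_0$, and then invoke Theorem~\ref{thderdiv}.

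First I would divide $V$ by the non-vanishing unit $1+h_0$. Writing $\tilde V= V/(1+h_0)$ and absorbing the quotients $(1+h_i)/(1+h_0)$ into new remainders $\tilde h_1,\tilde h_2$ and $\tilde h_3=-h_0/(1+h_0)$, I obtain
$$\tilde V=-\eps_0(M) - C_3(M)\eps_1(M)\, r^{\ov{\sigma}_3}(1+\tilde h_1)+ *\ov{\mu}_3\,r^{\ov{\sigma}_3}\rho\,(1+\tilde h_2)+K(M)\,r^{\ov{\sigma}_3}\rho^p\,\omega_\alpha\,(1+\tilde h_3).$$
Because $h_0=O(r^\delta)$ in~\eqref{form_Vp}, the new remainder $\tilde h_3$ is itself $O(r^\delta)$, which is precisely what is needed below for Lemma~\ref{lemdersingmon}. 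The four leading reduced exponents are $0,\, p+\alpha,\, p-1+\alpha,\,\alpha$, with a resonance $0\leftrightarrow\alpha$ at $M=M_0$, so Theorem~\ref{thderdiv} cannot be applied directly.

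Next I would apply $L_{\cal X}$ once to $\tilde V$. The constant $-\eps_0$ is killed. Lemma~\ref{lemderivmon} handles the second and third terms, whose reduced exponents $p+\alpha$ and $p-1+\alpha$ are both nonzero at $M_0$ using the hypothesis $p\neq 1$. For the fourth term, a direct calculation using $\ov{\sigma}_3-p=\alpha$ and $L_{\cal X}\omega_\alpha=-(1+\alpha\omega_\alpha)$ shows that the $\alpha\omega_\alpha$ contributions cancel, giving
$$L_{\cal X}\bigl[r^{\ov{\sigma}_3}\rho^p\omega_\alpha\bigr]=-r^{\ov{\sigma}_3}\rho^p,$$
and combined with $\tilde h_3=O(r^\delta)$ exactly as in Lemma~\ref{lemdersingmon} this produces $L_{\cal X}\bigl[r^{\ov{\sigma}_3}\rho^p\omega_\alpha(1+\tilde h_3)\bigr]=-r^{\ov{\sigma}_3}\rho^p(1+g_3)$ with $g_3=O(r^{\delta'})$. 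The outcome is
$$L_{\cal X}\tilde V=B_1(M)\,r^{\ov{\sigma}_3}(1+g_1)+B_2(M)\,r^{\ov{\sigma}_3}\rho\,(1+g_2)+B_3(M)\,r^{\ov{\sigma}_3}\rho^p\,(1+g_3),$$
whose three leading monomials have reduced exponents $p+\alpha$, $p-1+\alpha$, $\alpha$, which are pairwise distinct at $M=M_0$ because $p\geq 2$. Theorem~\ref{thderdiv} therefore applies to $L_{\cal X}\tilde V$: on a sufficiently small $\A\times\B$, either $L_{\cal X}\tilde V\equiv 0$ on $l_\nu$ (in which case $\tilde V$ is constant on the connected $\cal X$-orbit $l_\nu$, hence has no isolated zeros there) or $L_{\cal X}\tilde V$ has at most two isolated roots on $l_\nu$. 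By Rolle's theorem applied along the flow of $\cal X$ on $l_\nu$, the second case yields at most three isolated roots of $\tilde V$, and the same bound transfers to $V=(1+h_0)\tilde V$.

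The main delicate point is ensuring that the remainder multiplying $r^{\ov{\sigma}_3}\rho^p\omega_\alpha$ retains the order $O(r^\delta)$ after the division step, since the general $o(1)$ remainders admitted by Lemma~\ref{lemderivmon} would \emph{not} be compatible with Lemma~\ref{lemdersingmon}; this is why one must use the sharper estimate $h_0=O(r^\delta)$ recorded in~\eqref{form_Vp}. The arithmetic feature that makes a single derivation suffice is the cancellation of the two $\alpha\omega_\alpha$ terms in $L_{\cal X}[r^{\ov{\sigma}_3}\rho^p\omega_\alpha]$, which converts the resonant $\omega$-monomial into the pure power $r^{\ov{\sigma}_3}\rho^p$ and thereby breaks the $0\leftrightarrow\alpha$ resonance present in $\tilde V$.
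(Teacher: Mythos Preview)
Your proposal is correct and follows essentially the same route as the paper: divide by $1+h_0$, apply $L_{\cal X}$ once, use Lemma~\ref{lemdersingmon} on the $\omega_\alpha$-term (which is why $h_0=O(r^\delta)$ is essential), and then invoke Theorem~\ref{thderdiv} on the resulting three non-resonant monomials $r^{p+\alpha},\,r^{p-1+\alpha},\,r^\alpha$. The only cosmetic difference is that the paper first partitions $\B=\B_1\cup\cdots\cup\B_4$ and disposes of $\B_1$ (where $|\eps_0|$ dominates) directly, whereas you absorb that case into the ``identically zero'' alternative of Theorem~\ref{thderdiv} combined with Rolle; both are equivalent.
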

\begin{proof} Recall that the displacement map $V$ is given by
\begin{equation} \label{eq31}
V(r,\rho)=*\varepsilon_0(1+h_0)+*\varepsilon_1 r^{p+\alpha}(1+h_1)+*\bar\mu_3 \nu r^{p-1+\alpha}(1+h_2)+*K(M)\nu^p r^{\alpha}\omega_\alpha.
\end{equation}
 The sequence of leading monomials in (\ref{eq31}) does not verify the condition of non-resonance. To overcome this difficulty, we will use that there is no  remainder in the last term, and that $h_0$ is of order $O(r^\delta).$  For $h_1$ and $h_2$, it will be sufficient to know that they are $o(1).$ 

As  in the proof of Theorem \ref{thderdiv}, we define the partition $\B=\B_1\cup \B_2\cup \B_3 \cup \B_4$ in terms of the coefficients in (\ref{eq31}). At each step we will have to restrict the size of $\B.$ We will not recall it. 

As the three last leading monomials in  (\ref{eq31}) are $o(1),$ the cyclicity is  trivially $0$ when  $M\in\B_1.$
 We suppose now that  $M\in \B_2 \cup \B_3 \cup \B_4.$ Using (\ref{eq29}), we obtain:
$$L_{\cal X}\frac{ V}{1+h_0}=*\varepsilon_1r^{p+\alpha} (1+g_1) +*\bar\mu_3\nu r^{p-1+\alpha}(1+g_2)+*K(M)\nu^pr^\alpha.$$

Now, the sequence of leading monomials $\{r^{p+\alpha},r^{p-1+\alpha},r^\alpha\}$ verifies the condition of non-resonance and we can apply Theorem \ref{thderdiv} to $L_{\cal X}\frac{ V}{1+h_0}.$ Then, this function has at most  $2$ roots,   and the function $V$ itself has at most  $3$ roots, when  $M\in \B_2 \cup \B_3 \cup \B_4.$
\end{proof}

Finally, we have
\vskip5pt 

\begin{theorem}\label{thp1}
Consider the case $\sigma_0=1.$ Then the cyclicity of the boundary limit periodic set is at most $2$. 
\end{theorem}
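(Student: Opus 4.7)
The plan is to follow the strategy of Theorem~\ref{thpgeq2}, with a preliminary step that handles the extra resonance specific to $p=1$. Starting from \eqref{form_Vp3}, the key observation is that the third and fourth terms share the coefficient $\ov{\mu}_3$, while $\omega_\alpha\to+\infty$ and $\omega_\alpha^{-1}\to 0$ as $(r,\alpha)\to(0,0)$. I would first absorb the third term into the fourth,
$$*\ov{\mu}_3\, r^{\ov{\sigma}_3}\rho(1+h_2)+*\ov{\mu}_3\, r^{\ov{\sigma}_3}\rho\omega_\alpha(1+h_3)=*\ov{\mu}_3\, r^{\ov{\sigma}_3}\rho\omega_\alpha(1+H),$$
where $H=h_3+*\omega_\alpha^{-1}(1+h_2)$ is a ${\cal C}^k$-function on monomials of order $o(1)$. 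This rewrites $V$ as a three-term sum,
$$V=-\eps_0(1+h_0)-C_3\eps_1\, r^{\ov{\sigma}_3}(1+h_1)+*\ov{\mu}_3\, r^{\ov{\sigma}_3}\rho\omega_\alpha(1+H),$$
whose leading monomials $\{1,\,r^{\ov{\sigma}_3},\,r^{\ov{\sigma}_3}\rho\omega_\alpha\}$ still exhibit a resonance at $\lambda_0$ between the first and the third (their reduced exponents are $0$ and $\alpha$).

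I would then partition $\B=\B_0\cup\B_1\cup\B_2$ according to which of $|\eps_0|,|\eps_1|,|\ov{\mu}_3|$ dominates. On $\B_0$ the last two terms of $V$ are $o(1)$-multiples of the first, so $V$ has no zero on any $l_\nu$ unless $\eps_0\equiv 0$, in which case $V\equiv 0$. On $\B_1\cup\B_2$, dividing by $1+h_0$ and applying $L_{\cal X}$ kills the constant term. Lemma~\ref{lemderivmon} handles the $r^{\ov{\sigma}_3}$ term, and for the third I would use the clean identity
$$L_{\cal X}[r^{1+\alpha}\rho\,\omega_\alpha]=-r^{1+\alpha}\rho,$$
in which the $\omega_\alpha$ factor disappears. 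The outcome should have the form
$$L_{\cal X}\frac{V}{1+h_0}=*\eps_1\, r^{\ov{\sigma}_3}(1+g_1)-*\ov{\mu}_3\, r^{\ov{\sigma}_3}\rho(1+G).$$
The two remaining leading monomials, $r^{\ov{\sigma}_3}$ and $r^{\ov{\sigma}_3}\rho$, satisfy the non-resonance condition \eqref{eq27} of Theorem~\ref{thderdiv} at $\lambda_0$ (their reduced exponents differ by $1$). That theorem then bounds the number of zeros of $L_{\cal X}[V/(1+h_0)]$ on each curve $l_\nu$ by $1$, and Rolle's theorem yields the announced bound of $2$ zeros of $V$ on $l_\nu$.

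The main technical obstacle is verifying that the remainder $G$ is a genuine ${\cal C}^{k-1}$-function on monomials of order $o(1)$. Writing $\hat H=(1+H)/(1+h_0)-1$, the Leibniz rule gives $G=\hat H-\omega_\alpha L_{\cal X}\hat H$, so the essential point is to show $\omega_\alpha L_{\cal X}\hat H=o(1)$. This is not automatic for a generic $o(1)$-remainder, but the specific structure of $H$ makes it work. I would verify two estimates: first, $\omega_\alpha L_{\cal X}[\omega_\alpha^{-1}(1+h_2)]=(\omega_\alpha^{-1}+\alpha)(1+h_2)+L_{\cal X}h_2=o(1)$ by Lemma~\ref{lemderivfunct}; second, for $h_j=O(r^\delta)$ with $j\in\{0,3\}$, $\omega_\alpha L_{\cal X}h_j=\omega_\alpha\cdot O(r^\delta)=O(r^{\delta-|\alpha|}|\ln r|)=o(1)$, provided $|\alpha|<\delta$, which holds in a small enough $\B$. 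This last estimate is in the spirit of Lemma~\ref{lemdersingmon}, adapted here to the fact that the compensator must be absorbed by a genuine positive power of $r$ in the remainder.
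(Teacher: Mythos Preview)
Your proof is correct and follows essentially the same route as the paper: one derivation--division step removes the constant term, the identity $L_{\cal X}[r^{1+\alpha}\rho\,\omega_\alpha]=-r^{1+\alpha}\rho$ (equivalently Lemma~\ref{lemdersingmon}) eliminates the compensator, and Theorem~\ref{thderdiv} finishes on the pair $\{r^{\ov\sigma_3},r^{\ov\sigma_3}\rho\}$. The only cosmetic difference is that you merge the two $\ov\mu_3$-terms \emph{before} applying $L_{\cal X}$ whereas the paper differentiates each of the four terms separately and regroups afterwards; your careful check that $\omega_\alpha L_{\cal X}\hat H=o(1)$ is exactly the content of the paper's computation of $g_4=*\alpha h_2+L_{\cal X}h_2+g_3$.
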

\begin{proof} We can start with the formula (\ref{eq31}) which is valid for any $p\in \N.$ Moreover, for $p=1$ we have that $K(M)=\eta_4(\nu)-\eta_3(\nu)(1+\varepsilon_1)=*\bar\mu_3+O(\nu)O_P(M_C).$ This is a direct consequence of the fact that the linear part of the system at the points $P_3$ and $P_4$ is  given, up to a constant,  by $\dot r=r,\  \dot \rho =-\rho,\  
\dot{\bar y}=-\sigma(\bar y+\bar\mu_3 \rho).$ Then, we can split the last term in  (\ref{eq31})  as the sum $*\bar\mu_3\nu r^{\alpha}\omega_\alpha+\nu r^{\alpha}\omega_\alpha O(\nu)O_P(M_C).$ The second term gives contributions 
of order $O(r^\delta)$ in $h_0,h_1$ and $h_2$, and produces a remainder $h_3$ of order $O(r^\delta)$ for the last leading monomial  $r^{\alpha}\omega_\alpha.$

Then, for $p=1,$ the displacement map $V$ takes the form:
\begin{equation} \label{eq32}
V(r,\rho)=*\varepsilon_0(1+h_0)+*\varepsilon_1 r^{1+\alpha}(1+h_1)+*\bar\mu_3 \nu r^{\alpha}(1+h_2)+*\bar\mu_3\nu r^{\alpha}\omega_\alpha(1+h_3)
\end{equation}
  The sequence of leading monomials in (\ref{eq32}) does not verify the condition of non-resonance. To overcome this difficulty, we will use that  $h_0$ and $h_3$ are of order $O(r^\delta).$  It will be sufficient to know that $h_1$ and $h_2$ are $o(1).$ 
  
  As in the proof of Theorem \ref{thpgeq2}, the cyclicity is  $0$ if $|\varepsilon_0|\geq \mathrm{ max}\{|\varepsilon_1|,|\bar\mu_3|\}.$ 
  
  Otherwise, let us consider 
  $L_{\cal X}\frac{ V}{1+h_0}.$  Using (\ref{eq29}), we have that 
  $$L_{\cal X}\frac{ V}{1+h_0}=*\varepsilon_1r^{1+\alpha} (1+g_1) +*\bar\mu_3\nu \Big[\alpha r^{\alpha}(1+h_2)+r^\alpha L_{\cal X}h_2\Big]+*\bar\mu_3\nu r^\alpha(1+g_3),$$ 
  with  $ g_3$ of order $O(r^\delta).$  Grouping the different terms, we obtain
  
  $$L_{\cal X}\frac{ V}{1+h_0}=r^{1+\alpha}\Big[*\varepsilon_1 (1+g_1)+*\bar\mu_3\rho (1+*\alpha+g_4)\Big], $$ 
 where $g_4=*\alpha h_2+L_{\cal X}h_2+g_3$ is of order $o(1).$  
 Now, the sequence of leading monomials $\{1,\rho \}$ verifies the condition of non-resonance and we can apply Theorem \ref{thderdiv} to $r^{-1-\alpha}L_{\cal X}\frac{ V}{1+h_0}.$ This function has at most  $1$ root,  yielding that  $V$ itself has at most  $2$ roots, if $|\varepsilon_0|\leq\mathrm{max}\{|\varepsilon_1|,|\bar\mu_3|\}.$             
 \end{proof}

\vskip30pt
\section{Appendix III}

\begin{lemma}\label{proof_eps_0}
The  parameter function  $\eps_0$ in the expression of the displacement map $V$ has the form \eqref{eps_0} for system \eqref{infunfold_B1}.
\end{lemma}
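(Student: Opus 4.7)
The plan is to parallel the three-step argument sketched in the main text for system \eqref{infunfold}: first place $\eps_0$ in the center ideal $I_C=(\ov\mu_3,\mu_4,\mu_5)$ via a reversibility symmetry, second show that the first-order contribution in the $\mu_5$ direction vanishes, and third extract the nonzero leading $\mu_4$-coefficient from a Melnikov-type computation along the invariant parabola $y=\frac{1}{2}x^2-\frac{1}{2}$ available at $M=0$.

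First I would observe that \eqref{infunfold_B1} is invariant under the reversibility $(x,y,t)\mapsto(-x,y,-t)$ precisely when $\mu_3=\mu_4=\mu_5=0$, independently of $\mu_0,\mu_2$: the parameters $\mu_0,\mu_2$ contribute only even-in-$x$ terms to $\dot x$, while $\mu_3,\mu_4,\mu_5$ produce the odd-breaking terms ($\mu_5 x$ in $\dot x$ and $\mu_3 y^2$, $(\mu_4+\mu_5)x^2$ in $\dot y$). Since the normalizing sections $\Pi_3,\Pi_4$ are chosen reversibility-symmetric, the regular transition $T$ is the identity in the center case and $\eps_0\in I_C$. By $C^k$ smoothness this yields a decomposition $\eps_0(M)=A(M)\mu_3+B(M)\mu_4+C(M)\mu_5$, and since the blow-up parameter $\ov\mu_3$ enters the original quadratic field only through $\mu_3=\nu\ov\mu_3$, the contribution $A(M)\mu_3$ automatically carries an explicit factor $\nu$, giving $O(\ov\mu_3\nu)$.

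Second, setting $g(x,y)=y-\frac{1}{2}x^2+\frac{1}{2}$ so that $\nabla g=(-x,1)$, a direct computation gives $\dot g=\dot y-x\dot x=\mu_4 x^2+\mu_3 y^2-\mu_0 x^3-\mu_2 xy^2$ on $\{g=0\}$: the $\mu_5$-contribution $\mu_5 x^2$ (from $\dot y$) and $-\mu_5 x\cdot x$ (from $-x\dot x$) cancel exactly. Equivalently the infinitesimal perturbation $X_{\mu_5}=(x,x^2)$ satisfies $\langle\nabla g,X_{\mu_5}\rangle\equiv 0$, i.e.\ it is tangent to the parabola. Hence the first-order Melnikov integrand in the $\mu_5$-direction vanishes pointwise, forcing $C(0)=0$ and so $C(M)\mu_5=O(\mu_5)O(M)$. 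Geometrically this says that the parabola persists as an invariant curve on the whole slice $\{\mu_0=\mu_2=\mu_3=\mu_4=0\}$, so the boundary connection is preserved along this line of $\mu_5$.

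Third, the $\mu_4$-perturbation $X_{\mu_4}=(0,x^2)$ produces the Melnikov integrand $\langle\nabla g,X_{\mu_4}\rangle=x^2$, which is strictly positive away from $x=0$. Integrated along the compact piece of the unperturbed parabola joining $\Pi_3$ and $\Pi_4$ in the middle region (a finite-time trajectory at $M=0$), this yields a strictly positive first-order coefficient $B(0)\ne 0$, so $B(M)\mu_4=*\mu_4(1+O(M))$. Collecting the three contributions gives $\eps_0(M)=*\mu_4(1+O(M))+O(\ov\mu_3\nu)+O(\mu_5)O(M)$, as required. The main obstacle is the coordinate check that the sections $\Pi_3,\Pi_4$ can be chosen so that at $M=0$ the parabola passes through their origins in the normalizing coordinates; this is the same adjustment underlying the authors' earlier claim that the normalizing charts can be made symmetric under the reversibility, and once granted it makes the Melnikov reduction on the bounded middle region legitimate and independent of the (harmless) Dulac-type contributions near $P_3,P_4$.
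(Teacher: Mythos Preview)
Your proof is correct and follows the same three-part strategy as the paper: (i) $\eps_0\in I_C$ from the reversibility, (ii) the $\mu_5$-direction preserves the parabola so $C(M)\mu_5=O(\mu_5)O(M)$, and (iii) the $\mu_4$-direction breaks the parabola to first order. Your verification that $\langle\nabla g,X_{\mu_5}\rangle\equiv 0$ on $\{g=0\}$ is exactly the content of the paper's earlier remark that $y=\tfrac12x^2-\tfrac12$ is invariant for \eqref{infunfold_B1} when $\mu_0=\mu_2=\mu_3=\mu_4=0$.

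The only genuine difference is in step (iii). You use the elementary ``without contact'' argument: $\langle\nabla g,X_{\mu_4}\rangle=x^2\ge 0$ with equality only at $x=0$, so the first-order drift in $g$ along the finite arc between $\Pi_3$ and $\Pi_4$ is strictly positive. This is precisely the argument the main text invokes for system \eqref{infunfold} (``the term $\mu_4 x$ in $\dot x$ is without contact''), so you are simply transporting that same reasoning to \eqref{infunfold_B1}. The paper instead observes that for $\mu_0=\mu_2=\mu_3=\mu_5=0$ the unperturbed system has integrating factor $(1+y)^{-3}$ and evaluates the Melnikov integral
\[
\int_{y=\frac12 x^2-\frac12}\mu_4\,\frac{x^2}{(1+y)^3}\,dx=\int_{-\infty}^{\infty}\frac{8\mu_4 x^2}{(1+x^2)^3}\,dx=*\mu_4.
\]
Your transversality argument is shorter and avoids identifying the integrating factor; the paper's Melnikov computation is the standard one and would still work if the integrand changed sign. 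Both are valid here since the integrand $x^2$ has a definite sign.
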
 
\begin{proof} Since the system has an invariant parabola for $\mu_0=\mu_2=\mu_3=\mu_4=0$, it suffices to make the calculation for $\mu_0=\mu_2=\mu_3=\mu_5=0$. The system is integrable when $\mu_4=0$, with integrating factor $(1+y)^3$. 
Hence, it suffices to show that the following Melnikov integral is a nonzero multiple of $\mu_4$. Indeed, 
$$\int_{y=\frac12x^2-\frac12} \mu_4 \frac{x^2}{(1+y)^3}\,dx= \int_{-\infty}^\infty 8\mu_4 \frac{x^2}{(1+x^2)^3}\,dx= *\mu_4.$$
\end{proof}

\begin{lemma}\label{proof_eps_1}
The  parameter function $\eps_1$ in the expression of the displacement map $V$ has the form \eqref{eps_1} for both systems \eqref{infunfold} and \eqref{infunfold_B1}.\end{lemma}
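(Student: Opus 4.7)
The strategy is a Bautin-type division argument, parallel to Lemma~\ref{proof_eps_0} but for the linear coefficient of $H$. Under the center conditions $\mu_3=\mu_4=\mu_5=0$ of Theorem~\ref{thm.infty}(5), both systems \eqref{infunfold} and \eqref{infunfold_B1} reduce to the reversible integrable \eqref{inf}, with $P_3$ and $P_4$ exchanged by the $y$-axis symmetry. Since the sections $\Sigma_i,\Pi_i$ and the normalizing operator $\mathcal N$ have been chosen symmetric under this involution, $H$ is the identity in the center case, so $\eps_1(M) = \partial_{\ov x_3} H(0,\nu)-1$ vanishes on the stratum $\{\mu_3=\mu_4=\mu_5=0\}$. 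Smooth division yields
\[
\eps_1(M) = A(M)\mu_3 + B(M)\mu_4 + C(M)\mu_5,
\]
and substituting $\mu_3=\nu\ov\mu_3$ turns the first summand into $\nu\,O(\ov\mu_3)$; the desired expression~\eqref{eps_1} then reduces to showing $C(0)\neq 0$.

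To verify $C(0)\neq 0$, I would restrict to $\mu_2=\mu_3=\mu_4=0$. By Theorem~\ref{thm.infty}(4), the parabola~\eqref{Parabola.invariant} remains invariant, so the heteroclinic between $P_3$ and $P_4$ persists, $\eps_0$ vanishes on this subspace, and $1+\eps_1$ equals the linearization of $H$ along this persistent connection. Computing it via the variational equation and differentiating in $\mu_5$ at $\mu_5=0$ gives a Melnikov-type integral along the separatrix parabola of the integrable reference \eqref{inf}, evaluated against the Darboux integrating factor $(1+y)^{-(2B_0+1)}$ associated to the invariant line $y=-1$ (the divergence of \eqref{inf} is $(2B_0+1)x$, whence the exponent). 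A direct evaluation parallel to the one at the end of Lemma~\ref{proof_eps_0}, using the parametrization $1+y = \frac{2B_0-1}{2}x^2 + \frac{2B_0-1}{2B_0}$ along the parabola, yields an explicit convergent integral that is a nonzero multiple of $\mu_5$. The case $B_0=1$ (system~\eqref{infunfold_B1}) is treated identically, with $(1+y)^{-3}$ playing the role of the integrating factor exactly as in Lemma~\ref{proof_eps_0}.

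The main obstacle is that $P_3$ and $P_4$ are hyperbolic points of the blown-up vector field, so the literal divergence integral $\int \mathrm{div}(X)\,dt$ along the heteroclinic diverges logarithmically at both ends, and the finite coefficient $\eps_1$ is only extracted through the $C^k$-normalization that absorbs these hyperbolic contributions. To obtain a bona fide convergent Melnikov integral one must either perform the variational computation directly in the normalized coordinates of Theorem~\ref{thnormalformhyp} (retaining only the regular residual once the hyperbolic logarithms have been normalized away), or equivalently work in the original $(x,y)$-plane and exploit the rapid decay of $(1+y)^{-(2B_0+1)}$ along the parabola; in either approach the verification of $C(0)\neq 0$ reduces to a polynomial Abelian integral of the same type as in Lemma~\ref{proof_eps_0}.
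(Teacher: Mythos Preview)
Your overall strategy---divide $\eps_1$ in the center ideal and verify that the $\mu_5$-coefficient is nonzero on the slice $\mu_2=\mu_3=\mu_4=0$ where the invariant parabola persists---matches the paper's. The execution, however, is more roundabout. The paper invokes \cite{DER96} (Theorem~3.5), which reduces the question directly to showing that $\int\mathrm{div}(X_{\mu_5})\,dt$ along the invariant parabola is a nonzero multiple of $\mu_5$, and then evaluates this as the symmetric limit $\lim_{X_0\to\infty}\int_{-X_0}^{X_0}\frac{\mathrm{div}}{\dot x}\,dx$. Your worry about logarithmic divergence is correct for each half-line separately but dissolves in this symmetric limit: the dominant odd part $(2B+1)x/\dot x\sim 2(2B+1)/x$ produces logarithms that cancel between $\pm X_0$, and the finite remainder is $2B^{3/2}(B-1)\pi\mu_5+o(\mu_5)$ for \eqref{infunfold} and $2\pi\mu_5$ for \eqref{infunfold_B1}. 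No passage to normalized coordinates is needed; the result of \cite{DER96} already absorbs that bookkeeping.

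There is also a small conceptual slip in your plan. The Darboux integrating factor $(1+y)^{-(2B_0+1)}$ is the correct weight for a Melnikov integral that measures the \emph{splitting} of a connection---this is exactly how $\eps_0$ is handled in Lemma~\ref{proof_eps_0}---but $\eps_1=H'(0)-1$ measures the transversal expansion of the regular transition and is governed by the bare divergence integral $\int\mathrm{div}\,dt$, with no integrating factor in sight. So invoking $(1+y)^{-(2B_0+1)}$ here would lead you to the wrong integral.
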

\begin{proof} It has been proved in \cite{DER96} (see for instance Theorem 3.5) that it suffices to show that 
$\int \mathrm{div} \,dt =*\mu_5$ along the invariant parabola when all parameters but $\mu_5$ vanish. Two different calculations are needed for the cases \eqref{infunfold} and \eqref{infunfold_B1}. In the first case, the invariant parabola is given by \eqref{Parabola.invariant}. Then,
\begin{align*}\begin{split} 
\int \mathrm{div} \,dt&=\lim_{X_0\to \infty}\int_{-X_0}^{X_0} \frac{(2B+1)x + (1-B) \mu_5}{-y+ Bx^2+ B\mu_5x}\, dx\\
&=\lim_{X_0\to \infty}\left((2B+1) \ln\frac{1+B(X_0+(B-1)\mu_5)^2+o(\mu_5)}{1+B(X_0-(B-1)\mu_5)^2+o(\mu_5)} \right.\\
&\qquad \left.+2B^{3/2}(B-1) \mu_5 \left(\arctan\left(\sqrt{B}(X_0+O(\mu_5))\right)- \arctan\left(\sqrt{B}(-X_0+O(\mu_5))\right) \right)\right)\\
&= 2B^{3/2}(B-1)\pi  \mu_5 + o(\mu_5).\end{split} \end{align*}
The second case of \eqref{infunfold_B1} is easier since the invariant parabola $y=\frac12 x^2+ \frac12$ is independent of $\mu_5$. 
Then 
$$\int_{y=\frac12 x^2+ \frac12}  \mathrm{div} \,dt= \int_{-\infty}^\infty 2\mu_5 \frac{dx}{x^2+1} = 2\pi \mu_5.$$
\end{proof}

\begin{lemma}\label{coef_rho} 
The second derivative of  the map $S=\rho F(0,\rho)$, where $F$ is defined in \eqref{eq_F} is a nonzero multiple of $\ov{\mu}_3$. 
\end{lemma}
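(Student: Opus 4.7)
\medskip
\noindent\textbf{Proof plan.} The plan is to factor the claim into (A) vanishing of the second derivative of $\rho F(0,\rho)$ at $\rho=0$ when $\ov{\mu}_3=0$, handled by reversibility on $\{r=0\}$, and (B) a nonzero first derivative in $\ov{\mu}_3$ at $\ov{\mu}_3=0$, handled by a Melnikov first-variation calculation. Writing $F(0,\rho)=1+c(M)\rho+O(\rho^2)$, the second derivative at $\rho=0$ equals $2c(M)$, so the goal becomes $c(M)=*\ov{\mu}_3+O(\ov{\mu}_3^2)$ with $*\neq 0$.

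For step (A), the dynamics on $\{r=0\}$ is the 2D family rescaling \eqref{family_rescaling}. Since $\ov{\mu}_1=0$ identically in the quadratic case (the equator connection is fixed; see Theorem \ref{thm.infty}), setting $\ov{\mu}_3=0$ makes this 2D system reversible under the involution $\iota:(\ov x,\ov y,t)\mapsto(-\ov x,\ov y,-t)$, and $\iota$ exchanges $P_3$ and $P_4$. By the intrinsic character of the normalizing operator $\mathcal N$ of Theorem \ref{thnormalformhyp} (emphasized in the paper's methods summary), the normalizing charts and the sections $\Sigma_3,\Sigma_4$ can be chosen to be exchanged by $\iota$ with the identity action on the $(r,\rho)$-parameterizations. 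Reversibility then gives $\iota\circ S\circ\iota=S^{-1}$, which reduces to $S^2=\mathrm{id}$ on $\{r=0\}$. A $C^k$ orientation-preserving involution of $[0,\rho_0)$ fixing $0$ has trivial Taylor series at $0$, as one sees by expanding $S(S(\rho))=\rho$ order by order; in particular $c(M)|_{\ov{\mu}_3=0}=0$.

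For step (B), the invariance of $\nu=r\rho$ gives $S(0,\rho)=\rho/F(0,\rho)$, so
\[
S(0,\rho)-\rho = -c(M)\rho^2 + O(\rho^3),
\]
and at first order in $\ov{\mu}_3$ the Melnikov function
\[
M_1(\rho):=\partial_{\ov{\mu}_3}S(0,\rho)\big|_{\ov{\mu}_3=0} = -\frac{\partial c}{\partial\ov{\mu}_3}\bigg|_{\ov{\mu}_3=0}\,\rho^2 + O(\rho^3)
\]
has a vanishing linear-in-$\rho$ coefficient (which simply reflects that $r\rho$-invariance forces $\partial_\rho S(0,0)=1$ for every parameter value). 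The $\rho^2$-coefficient of $M_1$ is computed as a transverse second variation of the Melnikov integral along the reference heteroclinic $\gamma_0\subset\{\ov y>0\}$ from $P_4$ to $P_3$, with perturbing field $X_1:=\partial_{\ov{\mu}_3}X|_{\ov{\mu}_3=0}=\ov y\,\partial_{\ov y}$. Since $X_1$ is proportional to $\ov y$ and $\ov y>0$ strictly on the interior of $\gamma_0$, the integrand has definite sign, and the normal-form coordinates of Theorem \ref{thnormalformhyp} provide exponential decay near the hyperbolic endpoints, ensuring convergence of the improper integrals; the resulting integral is strictly nonzero.

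The hard part will be extracting the $\rho^2$-coefficient rather than a mere linear one --- the linear term is automatically zero, so a single Rolle/Taylor step in $\rho$ does not suffice --- and verifying that the transverse second variation along $\gamma_0$ is indeed strictly nonzero. Both the convergence at $P_3, P_4$ and the definite sign of the integrand on the open arc are controlled by the explicit exponential rates in the normalizing coordinates of Theorem \ref{thnormalformhyp}, which ensure that the improper integrals are well-defined and that the $\ov y$-weighted integrand does not change sign over $\gamma_0$.
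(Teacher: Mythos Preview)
Your step (A) is fine and is essentially what the paper does implicitly: in the chart $(\rho,\ov x)$ on $\{r=0\}$ one has $P_{\rho\rho}''(0,\ov x)=2\ov{\mu}_3$, $Q_\rho'(0,\ov x)=-\ov{\mu}_3\ov x$, and $f_i'(0)=\tfrac{\ov{\mu}_3}{3-4B}(1+O(X_0))$, so the explicit second-derivative formula from \cite{ZR} (Proposition~5.2) makes the factor $\ov{\mu}_3$ manifest.

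The genuine gap is in step (B). Your claim that ``the integrand has definite sign because $X_1=\ov y\,\partial_{\ov y}$ with $\ov y>0$ on $\gamma_0$'' does not survive contact with the actual formula. First, there is a coordinate mismatch: in the family-rescaling chart $(\ov x,\ov y)$ where $X_1=\ov y\,\partial_{\ov y}$, the heteroclinic $\gamma_0$ lies at infinity ($\rho=0$), so one must pass to the $(\rho,\ov x)$-chart, where the perturbation becomes $\ov{\mu}_3\rho^2\,\partial_\rho-\ov{\mu}_3\ov x\rho\,\partial_{\ov x}$ and the naive sign heuristic is lost. Second, in that chart the paper's integrand for $I_3$ contains the factor $(1-B\ov x^2)\,(2+(1-2B)\ov x^2)^{(8B-5)/(2(1-2B))}$, and since $1/\sqrt{B}<\beta=\sqrt{2/(2B-1)}$ for every $B>\tfrac12$, the factor $1-B\ov x^2$ changes sign on the integration interval; in addition there are nontrivial boundary terms $I_1,I_2$ coming from the normalizing change of coordinates. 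So the integrand is \emph{not} of definite sign, and a sign argument cannot close the proof.

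The paper establishes non-vanishing by a direct (and unavoidable-looking) computation: the integral is expressed through Gauss hypergeometric functions $\phantom{,}_2F_1$, and one analyzes the asymptotics as $x_0\to\beta$ (equivalently $X_0\to 0$) using the connection formula for $\phantom{,}_2F_1$ near $z=1$, with separate treatments for $B_0<\tfrac34$, $B_0=\tfrac34$, and $B_0>\tfrac34$. The outcome is that one of $I_3$ or $2(I_1+I_2)$ dominates with a nonzero coefficient, so $S''(0)=*\ov{\mu}_3\neq0$. Your plan would need to replace the heuristic sign claim by an argument of this kind; as written, step (B) is not a proof.
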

\begin{proof}
We first localize the system \eqref{infunfold} at the nilpotent point at infinity using the coordinates $(v,w)= (-\frac{x}{y}, \frac1{y})$: after mutiplication by $w$, this yields
\begin{align}\begin{split}
\dot v&=w+ (1-B)v^2-\mu_2- \mu_3v+vw((3B-1)\mu_5+\mu_4) + v^2w,\\
\dot w&=vw - \mu_3w - (1-2B)\mu_5w^2+vw^2.
\end{split} \label{coord_vw} \end{align}
A similar localization can be done for \eqref{infunfold_B1}. We now let the blow-up $(v,w)= (r\ov{x}, r^2)$ for $w>0$, and we consider the restriction of the blow-up system to the $(\rho,\ov{x})$-plane for $r=0$, (after multiplication by $2$) 
\begin{align*}\begin{split}
\dot \rho&=-\rho(\ov{x}-\ov{\mu}_3\rho)= P(\rho, \ov{x}),\\
\dot{\ov{x}}&=2+(1-2B)\ov{x}^2-2 \ov{\mu}_2\rho^2-\ov{\mu}_3\ov{x}\rho= Q(\rho,\ov{x}).
\end{split} \end{align*}
Note that this system is the same for \eqref{infunfold} and \eqref{infunfold_B1}. The singular points occur at $\ov{x}=\pm \beta$ with $\beta=\sqrt{\frac2{2B-1}}$. We localize at $P_3$ using $x_3=\beta-\ov{x}$ and at $P_4$ using $x_4=\beta+\ov{x}$. Hence, the system at $P_4$ is obtained from that at $P_3$ through $(x_3,\beta)\mapsto (-x_4,-\beta)$.
The map is between two sections $\{\ov{x}_i= X_0\}$ in the normal form coordinates $\ov{x}_i$ near $P_i$ and we take $X_0$ small. The section $\{\ov{x}_4= X_0\}$ (resp $\{\ov{x}_3= X_0\}$) has equation 
$\ov{x}= f_4(\rho)=- x_0 +O(\rho)$ (resp. $\ov{x}= f_3(\rho)= x_0 +O(\rho)$). A formula for the second derivative was given in \cite{ZR} (Proposition 5.2), namely
\begin{align}\begin{split} 
S''(0)&= S'(0)\left[2\left(f_4'(0)S'(0)\left(\frac{P_\rho'}{Q}\right)(0,f_4(0))- f_3'(0)\left(\frac{P_\rho'}{Q}\right)(0,f_3(0))\right)\right.\\
&\qquad+ \left. \int_{f_3(0)}^{f_4(0)} \left(\frac{P_{\rho\rho}''}{Q}(0,\ov{x})-2\frac{P_\rho'Q_\rho'}{Q^2}(0,\ov{x})\right)\exp\left(\int_{f_3(0)}^{\ov{x}} \left(\frac{P_\rho'}{Q}\right)(0,x)  dx\right) d\ov{x}\right].\end{split}\label{derivee_seconde}\end{align}
Here, $S'(0)=1$. 
We call the three terms in the bracket $2I_1$, $2I_2$ and $I_3$. 
Let us first consider $I_3$.
\begin{equation}I_3= 4\ov{\mu}_3(2+(1-2B)x_0^2)^{\frac1{2(1-2B)}}\int_{x_0}^{-x_0} (1-B\ov{x}^2)(2+(1-2B)\ov{x}^2)^{\frac{8B-5}{2(1-2B)}}  d\ov{x}.\label{formula_I3}\end{equation}
There are two different cases for $f_j'(0)$ depending whether $B_0= \frac 34$ or not. 

\medskip
\noindent {\bf The case $B_0=\frac34$.} In this case, the singular point has equal eigenvalues and a Jordan normal form for nonzero $\ov{\mu}_3$. Hence, the change of coordinate to normal form is tangent to the identity and $f_3'(0),f_4'(0) =O(\ov{\mu}_3)O(X_0)$. 
Also the integral part of $I_3$ in \eqref{formula_I3} is equal to $-2\left(\frac{3}{2}x_0-\ln \frac{2+x_0}{2-x_0}\right)\not =0$.
The result follows in that case. 

\noindent {\bf The case $B_0\neq\frac34$.} In this case, the change of coordinates to normal form is given by $\ov{x}= \beta-\left(\ov{x}_3-\frac{\ov{\mu}_3}{3-4B}\rho\right) +O(|(\rho,\ov{x}_3)|^2)$ for $P_3$ (resp. 
$\ov{x}=- \beta+\left(\ov{x}_4+\frac{\ov{\mu}_3}{3-4B}\rho\right) +O(|(\rho,\ov{x}_4)|^2)$  for $P_4$), yielding $f_i'(0)= \frac{\ov{\mu}_3}{3-4B}(1+ O(X_0))$;

$$2I_1+2I_2=[[+]]\ov{\mu}_3\frac{4}{3-4B}\,\frac{x_0}{2+(1-2B)x_0^2} $$
As for the integral part in $I_3$, it is given by \begin{equation}\frac23 2^{\frac{5-8B}{2(2B-1)}} x_0\left[-3\phantom{,}_2F_1\left(\frac12,\frac{5-8B}{2(1-2B)};\frac32; \frac{2B-1}{2}x_0^2\right) +Bx_0^2\phantom{,}_2F_1\left(\frac32,\frac{5-8B}{2(1-2B)};\frac52; \frac{2B-1}{2}x_0^2\right) \right], \label{eq_I3}\end{equation}
where $\phantom{,}_2F_1(a,b;c;z)$ is the Gauss hypergeometric function defined by
$$\phantom{,}_2F_1(a,b;c;z)= \sum_{i=0}^\infty \frac{(a)_n(b)_n}{(c)_n} \,\frac{z^n}{n!},$$
with
$$(a)_0=1, \qquad (a)_n=a(a+1)\dots (a+n-1).$$
The function $\phantom{,}_2F_1(a,b;c;z)$ is analytic in the whole plane, except for a singularity at $z=1$. Moreover,  $\phantom{,}_2F_1(a,b;c;0)=1$ and
\begin{align}\begin{split} &\phantom{,}_2F_1(a,b;c;z)= \frac{\Gamma(c)\Gamma(c-a-b)}{\Gamma(c-a)\Gamma(c-b)}\phantom{,}_2F_1(a,b;a+b-c+1;1-z)\\
&\qquad+ (1-z)^{c-a-b}\frac{\Gamma(c)\Gamma(a+b-c)}{\Gamma(a)\Gamma(b)}\phantom{,}_2F_1(c-a,c-b;c-a-b+1;1-z)
\end{split}\end{align}
for $z\in (-1,1)$. This yields that near $z=1$
\begin{equation}\phantom{,}_2F_1(a,b;c;z)=  \frac{\Gamma(c)\Gamma(c-a-b)}{\Gamma(c-a)\Gamma(c-b)}+ \frac{\Gamma(c)\Gamma(a+b-c)}{\Gamma(a)\Gamma(b)}(1-z)^{c-a-b}.\label{hyperg_1}\end{equation}
In the two hypergeometric functions appearing in \eqref{eq_I3}, the exponent of $(1-z)$ in \eqref{hyperg_1} is 
$$c-a-b= \frac{4B-3}{2(1-2B)}\begin{cases} <0, &B>\frac34,\\
>0,& B<\frac34.\end{cases}.$$ Hence, the first (resp. second) term in \eqref{hyperg_1} is dominant when $B<\frac34$ (resp. $B>\frac34$). We treat the two cases. 

\medskip
\noindent{\bf The case $B<\frac34$.}
For $\frac{2B-1}{2}x_0^2$ close to $1$,  the bracket part of \eqref{eq_I3} is close to 
$$-3\frac{\Gamma(\frac32)\Gamma(\frac{4B-3}{2(1-2B)})}{\Gamma(1)\Gamma(\frac{B-1}{1-2B})} + Bx_0^2\frac{\Gamma(\frac52)\Gamma(\frac{4B-3}{2(1-2B)})}{\Gamma(1)\Gamma(\frac{B-1}{1-2B}+1)} =\frac{\Gamma(\frac32)\Gamma(\frac{4B-3}{2(1-2B)})}{\Gamma(1)\Gamma(\frac{B-1}{1-2B})} \left (-3+\frac{3(1-2B)}{2(B-1)} Bx_0^2\right), $$
since $\Gamma(x+1)=x\Gamma(x)$. We let $x_0^2= \frac2{2B-1}- \delta$, with $\delta>0$ small. Using that $\Gamma(\frac32)=\frac12\sqrt{\pi},$
the integral part of $I_3$ in \eqref{formula_I3} is close to 
$$\begin{cases} -\frac{3\sqrt{\pi}}2\frac{\Gamma(\frac{4B-3}{2(1-2B)})}{\Gamma(\frac{B-1}{1-2B})}\frac{2B-1}{2(B-1)}(2-B\delta), &B_0\neq1\\
\frac{3\sqrt{\pi}}4 \frac{\Gamma(\frac{4B-3}{2(1-2B)})}{\Gamma(\frac{-B}{1-2B})} Bx_0^2+ O(B-B_0), &B_0=1.\end{cases}$$
The coefficient is nonzero for $\delta>0$ as soon as $B_0\neq1$ (resp. $B_0=1$) and $\frac{B-1}{1-2B} $ (resp.  $-\frac{B}{1-2B}$) is not a negative integer, which is the case for $B>\frac12$.
This shows that $I_3$ grows as $(2+(1-2B)x_0^2)^{\frac1{2(1-2B)}}$, while $2(I_1+I_2)$ grows as $(2+(1-2B)x_0^2)^{-1}$. Hence, [[$I_3$]] is dominant when $B<\frac34$, and $2(I_1+I_2)+I_3= *\ov{\mu}_3\neq0$ when $B<\frac34$. 

\medskip
\noindent{\bf The case $B>\frac34$.}
[[For $\frac{2B-1}{2}x_0^2$ close to $1$,  the bracket part of \eqref{eq_I3} has two parts $J_3'$ and $J_3''$.
$$J_3'= -\frac{3\sqrt{\pi}}2\frac{\Gamma(\frac{4B-3}{2(1-2B)})}{\Gamma(\frac{B-1}{1-2B})}\frac{2B-1}{2(B-1)}(2+O(\delta)).$$
\begin{align*}\begin{split}J_3''&=\left(1-\frac{2B-1}{2} x_0^2\right)^{\frac{4B-3}{2(1-2B)}}\left(-3\frac{\Gamma(\frac32)\Gamma(\frac{3-4B}{2(1-2B)})}{\Gamma(\frac12)\Gamma(\frac{5-8B}{2(1-2B)})} + Bx_0^2\frac{\Gamma(\frac52)\Gamma(\frac{3-4B}{2(1-2B)})}{\Gamma(\frac32)\Gamma(\frac{5-8B}{2(1-2B)})}+O(\delta)\right)\\ &= \frac32\left(1-\frac{2B-1}{2} x_0^2\right)^{\frac{4B-3}{2(1-2B)}}\frac{\Gamma(\frac{3-4B}{2(1-2B)})}{\Gamma(\frac{5-8B}{2(1-2B)})}(Bx_0^2 -1+O(\delta))\\
& -\frac{3}{3-4B}\left(1-\frac{2B-1}{2} x_0^2\right)^{\frac{4B-3}{2(1-2B)}}(1+O(\delta)). \end{split}\end{align*}

\noindent This yields the corresponding parts $I_3'$ and $I_3''$ for $I_3$, considering that $\delta=2+(1-2B)x_0^2$:
$$\begin{cases}
I_3'=*\ov{\mu}_3 J_3'\delta^{\frac1{2(1-2B)}},\\
I_3''= -\ov{\mu}_3\frac{4x_0}{3-4B}\delta^{-1}+ O(1).\end{cases}$$
Considering that $\frac1{2(1-2B)}\in (-1,0)$, 
Then $2(I_1+I_2)+I_3=*\ov{\mu}_3 \delta^{\frac1{2(1-2B)}} (1+O(\delta))\neq0$. ]]  \end{proof}

\end{document}